\newtheoremstyle{exampstyle}
  {\topsep} % Space above
  {\topsep} % Space below
  {} % Body font
  {} % Indent amount
  {\bfseries} % Theorem head font
  {:} % Punctuation after theorem head
  {.5em} % Space after theorem head
  {} % Theorem head spec (can be left empty, meaning `normal')
\newtheorem{theorem}{Theorem}
\newtheorem{proposition}{Proposition}
\newtheorem{corollary}{Corollary}
\newcommand{\BigO}[1]{\ensuremath{\operatorname{O}\bigl(#1\bigr)}}
\newcommand*{\rom}[1]{\expandafter\@slowromancap\romannumeral #1@}
\journal{arXiv.org}
\begin{document}

\begin{frontmatter}

\title{Coupling Feasibility Pump and Large Neighborhood Search to solve the Steiner team orienteering problem}
%\tnotetext[mytitlenote]{Fully documented templates are available in the elsarticle package on \href{http://www.ctan.org/tex-archive/macros/latex/contrib/elsarticle}{CTAN}.}

%% Group authors per affiliation:
% \author{Lucas Assunção}
% \address{Departamento de Engenharia de Produ\c{c}\~ao, Universidade Federal de Minas Gerais\\
%   Avenida Ant\^onio Carlos 6627, CEP 31270-901, Belo Horizonte, MG, Brazil}
% \ead{lucas-assuncao@ufmg.br}

% \author{Geraldo Robson Mateus}
% \address{Departamento de Ci\^encia da Computa\c{c}\~ao, Universidade Federal de Minas Gerais\\
%   Avenida Ant\^onio Carlos 6627, CEP 31270-901, Belo Horizonte, MG, Brazil}
% \ead{mateus@dcc.ufmg.br}

\author[dep]{Lucas Assun\c{c}\~ao\corref{cor1}}
\ead{lucas-assuncao@ufmg.br}
\author[dcc]{Geraldo Robson Mateus}
\ead{mateus@dcc.ufmg.br}
\cortext[cor1]{Corresponding author.}

\address[dep]{
  Departamento de Engenharia de Produ\c{c}\~ao, Universidade Federal de Minas Gerais\\
  Avenida Ant\^onio Carlos 6627, CEP 31270-901, Belo Horizonte, MG, Brazil\\
}

\address[dcc]{
  Departamento de Ci\^encia da Computa\c{c}\~ao, Universidade Federal de Minas Gerais\\
  Avenida Ant\^onio Carlos 6627, CEP 31270-901, Belo Horizonte, MG, Brazil\\
}

\begin{abstract}
The Steiner Team Orienteering Problem (STOP) is defined on a digraph in which arcs are associated with traverse times, and whose vertices are labeled as either \emph{mandatory} or \emph{profitable}, being the latter provided with rewards (profits). Given a homogeneous fleet of vehicles $M$, the goal is to find up to $m = |M|$ disjoint routes (from an origin vertex to a destination one) that maximize the total sum of rewards collected while satisfying a given limit on the route's duration. Naturally, all mandatory vertices must be visited. In this work, we show that solely finding a feasible solution for STOP is NP-hard and propose a Large Neighborhood Search (LNS) heuristic for the problem. The algorithm is provided with initial solutions obtained by means of the matheuristic framework known as Feasibility Pump (FP). In our implementation, FP uses as backbone a commodity-based formulation reinforced by three classes of valid inequalities. To our knowledge, two of them are also introduced in this work. The LNS heuristic itself combines classical local searches from the literature of routing problems with a long-term memory component based on Path Relinking. We use the primal bounds provided by a state-of-the-art cutting-plane algorithm from the literature to evaluate the quality of the solutions obtained by the heuristic.
Computational experiments show the efficiency and effectiveness of the proposed heuristic in solving a benchmark of 387 instances. Overall, the heuristic solutions imply an average percentage gap of only 0.54\% when compared to the bounds of the cutting-plane baseline. In particular, the heuristic reaches the best previously known bounds on 382 of the 387 instances. Additionally, in 21 of these cases, our heuristic is even able to improve over the best known bounds.
\end{abstract}

\begin{keyword}
Orienteering problems\sep Feasibility Pump \sep Large Neighborhood Search \sep Path Relinking \sep Matheuristics \sep Metaheuristics
\end{keyword}

\end{frontmatter}

%\linenumbers

\section{Introduction}
\label{s_intro}

The Steiner Team Orienteering Problem (STOP) arises as a more general case of the widely studied profit collection routing problem known as the Team Orienteering Problem (TOP). 
TOP is an NP-hard problem usually defined on a complete and undirected graph, where a value of reward (profit) is associated with each vertex, and a traverse time is associated with each edge (or arc).
Given a homogeneous fleet of vehicles $M$, TOP aims at finding up to $m = |M|$ disjoint routes from an origin vertex to a destination one, such that each route has to satisfy a time constraint while the total sum of rewards collected is maximized.
In the case of STOP, the input graph is directed, and a subset of mandatory vertices is also provided. Accordingly, STOP also aims at maximizing the total sum of rewards collected within the time limit, but, now, every mandatory vertex has to be visited.

STOP finds application, for instance, in devising the itinerary of the delivery of goods performed by shipping companies \citep{Assuncao2019}. Here, a reward value --- which may rely on factors such as the urgency of the request and the customer priority --- is associated with visiting each customer. Deliveries with top priority (e.g., those whose deadlines are expiring) necessarily have to be included in the planning.
Accordingly, the goal is to select a subset of deliveries (including the top priority ones) that maximizes the total sum of rewards collected and that can be performed within a pre-established working horizon of time. 

Another application arises in the planning of home health care visits. Home health care covers a wide range of services that can be given in one's home for an illness or injury, such as monitoring a patient’s medication regimen and vital signs (blood pressure, temperature, heart rate, etc). In this case, priority could me linked to the patients' needs. Similar applications arise in the planning of other sorts of technical visits \citep{Tang05,Assuncao2019}

STOP was only introduced quite recently by \cite{Assuncao2019}. In the work, a state-of-the-art branch-and-cut algorithm from the literature of TOP is adapted to STOP, and a cutting-plane algorithm is proposed. The new algorithm relies on a compact (with a polynomial number of variables and constraints) commodity-based formulation --- also introduced in the work --- reinforced by some classes of inequalities, which consist of general connectivity constraints, classical lifted cover inequalities based on dual bounds and a class of \emph{conflict cuts}. The cutting-plane proposed not only outperforms the baseline when solving a benchmark of STOP instances, but also solves to optimality more TOP instances than any previous algorithm in the literature.

Several variations of orienteering problems have already been addressed in the literature,
combining some specific constraints, such as mandatory visits (like in STOP), multiple vehicles and time windows~\citep{Tricoire2010,Salazar2014,Archetti2014b}. Here, we discuss some of the orienteering problems that are more closely related to STOP. For instance, the single-vehicle version of STOP, known as the \emph{Steiner Orienteering Problem} (SOP), was already introduced by \cite{Letchford13}. In the work, four Integer Linear Programming (ILP) models are proposed, but no computational experiment is reported.

STOP is particularly similar to the Team Orienteering Arc Routing Problem (TOARP), in which profitable and mandatory arcs are considered (instead of vertices). \cite{Archetti2014b} introduced TOARP, along with a polyhedral study and a branch-and-cut algorithm able to solve instances with up to 100 vertices, 800 arcs and four vehicles. These instances consider sparse random and grid-based graphs. More recently, \cite{Ledesma2017} proposed a column generation approach that works by first converting the original customer-on-arc representation into a customer-on-vertex one, thus creating STOP-like instances. Their results indicate that the column generation algorithm is a complementary approach to the branch-and-cut of \cite{Archetti2014b}, being useful in those cases where the latter shows poor performance. 

To the best of our knowledge, only two heuristics have been proposed for TOARP: the matheuristic of \cite{Archetti2015} and the iterated local search heuristic of \cite{Liangjun2017}.
With respect to the single-vehicle version of TOARP, known as the Orienteering Arc Routing Problem (OARP), \cite{Archetti2016} developed of a branch-and-cut algorithm based on families of facet-inducing inequalities also introduced in the work.

The specific case of STOP with no mandatory vertices --- namely TOP --- has drawn significant attention in the past decades, and several exact and heuristic algorithms have been proposed to solve the problem, as summarized next.

The previous exact algorithms for TOP are based on cutting planes and column generation. In summary, they consist of branch-and-cut \citep{Dang13b,Bianchessi2017,Hanafi2020}, branch-and-price \citep{Boussier07,Keshtkaran16}, branch-and-cut-and-price \citep{Poggi10,Keshtkaran16} and cutting-plane algorithms \citep{ElHajj2016,Assuncao2019}. Among them, we highlight the branch-and-cut of \cite{Hanafi2020} as the current state-of-the-art for TOP, being able to solve to optimality five more instances than the previous state-of-the-art \citep{Assuncao2019}.

Regarding heuristics for TOP, several ones have been proposed for TOP, adapting a wide range of metaheuristic frameworks, such as \emph{Tabu Search} \citep{Tang05}, \emph{Variable Neighborhood Search} (VNS) \citep{Archetti07,Vansteenwegen09}, \emph{Ant Colony} \citep{Ke08}, \emph{Greedy Randomized Adaptive Search Procedure} (GRASP) \citep{Souffriau10}, \emph{Simulated Annealing} \citep{Lin13}, \emph{Large Neighborhood Search} (LNS) \citep{Kim13,Vidal2015} and \emph{Particle Swarm Optimization} (PSO) \citep{Dang13}.
To our knowledge, the latest heuristic for TOP was proposed by \cite{Ke2016}.
Their heuristic, namely \emph{Pareto mimic algorithm}, introduces a so-called \emph{mimic operator} to generate new solutions by imitating incumbent ones. The algorithm also adopts the concept of \emph{Pareto dominance} to update the population of incumbent solutions by considering multiple indicators that measure the quality of each solution.
We refer to \cite{Vansteenwegen2011} and \cite{Gunawan2016} for detailed surveys on exact and heuristic approaches for TOP and some of its variants.

Although the heuristics proposed for TOP follow different metaheuristic frameworks, most of them have two key elements in common: the way they generate initial solutions and the local search operators used to improve the quality of the solutions, which usually consist of the classical 2-opt and its variations~\citep{Lin1965}, coupled with vertex shiftings and exchanges between routes. While these local searches naturally apply to STOP, finding a feasible solution for STOP is not always an easy task. 
In TOP, feasible solutions are usually built by iteratively adding vertices to initially empty routes according to greedy criteria \citep{Chao96,Archetti07,Vansteenwegen09,Souffriau10,Lin13,Kim13}. On the other hand, in STOP, a trivial solution with empty routes is not necessarily feasible, since mandatory vertices have to be visited.
In fact, as we formally prove in the remainder of this work, solely finding a feasible solution for an STOP instance (or proving its infeasibility) is NP-hard in the general case.

In this work, we propose an LNS heuristic for STOP. As far as we are aware, this is also the first heuristic devised to solve the general case of STOP. The LNS heuristic combines some of the aforementioned classical local searches from the literature of TOP and other routing problems with a long-term memory component based on Path Relinking (PR)~\citep{Glover1997}. The proposed heuristic tackles the issue of finding an initial solution by applying the Feasibility Pump (FP) matheuristic framework of \cite{Fischetti2005}. In particular, FP has been successfully used to find initial feasible solutions for several challenging problems modelled by means of Mixed Integer Linear Programming (MILP)~\citep{Berthold2019}.

Given an MILP formulation for the problem addressed, FP works by sequentially and wisely rounding fractional solutions obtained from solving auxiliary problems based on the linear relaxation of the original MILP formulation. In this work, we take advantage from the compactness of the formulation presented by \cite{Assuncao2019} to use it within the FP framework. In our experiments, we also consider a reinforced version of this formulation obtained from the addition of three classes of valid inequalities. The idea is that a stronger model, being closer to the convex hull, might help FP converging to an integer solution within less iterations. In fact, as later endorsed by our experiments, the strengthening of the original formulation pays off. 

The three classes of valid inequalities adopted in this work consist of \emph{clique conflict cuts}, \emph{arc-vertex inference cuts} and classical lifted cover inequalities based on dual bounds. The latter one was already used in the cutting-plane of \cite{Assuncao2019}, but, as far as we are aware, the first two classes are new valid inequalities for the problem. In particular, the clique conflict cuts generalize and extend the general connectivity constraints and cuts based on vertex conflicts used in previous works \citep{ElHajj2016,Bianchessi2017,Assuncao2019}. The due proofs regarding the validity of these new inequalities and the dominance of clique conflict cuts over previously adopted inequalities are also given.%, along with examples of the fractional solutions cut off by them.

In summary, the contributions of this work are threefold: (i) the problem of finding a feasible solution for STOP is proven to be NP-hard, (ii) a previously introduced formulation for STOP is further reinforced by the addition of three classes of valid inequalities. As far as we are aware, two of them --- namely clique conflict cuts and arc-vertex inference cuts --- are also introduced in this work. Moreover, (iii) an LNS heuristic that uses FP to find initial solutions is presented and experimentally evaluated. To this end, we consider the primal bounds provided by the cutting-plane algorithm of \cite{Assuncao2019} as a baseline. Our computational results show the efficiency and effectiveness of the proposed heuristic in solving a benchmark of STOP instances adapted from the literature of TOP. Overall, the heuristic solutions imply an average percentage gap of only 0.54\% when compared to the bounds of the cutting-plane baseline. In particular, the heuristic reaches the best previously known bounds on 382 out of the 387 instances considered. Additionally, in 21 of these cases, the heuristic is even able to improve over the best known bounds.

The remainder of this work is organized as follows. STOP is formally defined in Section~\ref{s_notation}, along with the notation adopted throughout the work.
In Section~\ref{s_np-hard_proof}, we prove that solely finding a feasible solution for STOP is NP-hard. In Section~\ref{s_models}, we describe an MILP formulation for STOP, as well as three families of inequalities used to reinforce it. In the same section, we give the proofs of validity for the inequalities (when the case) and describe the procedures used to separate them.
In Section~\ref{s_cutting-plane}, we present the general cutting-plane procedure used to reinforce the original STOP formulation by the addition of the three classes of inequalities discussed in this work.
Sections~\ref{s_FP} and~\ref{s_LNS} are devoted to detailing the FP procedure used to generate initial solutions and the LNS heuristic proposed, respectively.
Some implementation details are given in Section~\ref{s_implementation_details}, followed by computational results (Section~\ref{s_experiments}). Concluding remarks are provided in the last section. 

\section{Problem definition and notation}
\label{s_notation}
STOP is defined on a (not necessarily complete) digraph $G=(N,A)$, where $N$ is the vertex set, and $A$ is the arc set. Let $s,t \in N$ be the origin and the destination vertices, respectively, with $s \neq t$. Moreover, let $S \subseteq N\backslash\{s,t\}$ be the subset of \emph{mandatory}
vertices, and $P \subseteq N\backslash\{s,t\}$ be the set of \emph{profitable} vertices, 
such that $N = S \cup P \cup \{s,t\}$ and $S \cap P = \emptyset$. A reward $p_i \in \mathbb{Z}^+$ is associated with each vertex $i \in P$, and a traverse time $d_{ij} \in \mathbb{R}^+$ is associated with each arc $(i,j) \in A$. Additionally, a fleet $M$ of homogeneous vehicles is available to visit the vertices, and each vehicle can run for no more than a time limit $T$. For short, an STOP instance is defined as $\langle G,S,P,s,t,p,d,M,T \rangle$.

The goal is to find up to $m = |M|$ routes from $s$ to $t$ --- one for each vehicle used --- such that every mandatory vertex in $S$ belongs to exactly one route and the total sum of rewards collected by visiting profitable vertices is maximized. Each profitable vertex in $P$ can be visited by at most one vehicle, thus avoiding the multiple collection of a same reward.
%Likewise, each mandatory vertex in $S$ must be visited only once.

Given a subset $V \subset N$, we define the sets of arcs leaving and entering $V$ as $\delta^+(V) = \{(i,j)\in A:\, i \in V,\, j \in N \backslash V\}$ and $\delta^-(V) = \{(i,j)\in A:\, i \in N\backslash V,\, j \in V\}$, respectively. Similarly, given a vertex $i \in N$,
we define the sets of vertices $\delta^+(i) = \{j\in N:\, (i,j) \in A\}$ and $\delta^-(i) = \{j\in N:\, (j,i)\in A\}$.
Moreover, given two arbitrary vertices $i,j \in N$ and a path $p$ from $i$ to $j$ in $G$, we define $A_p \subseteq A$ as the arc set of $p$. 

\section{Building a feasible solution for STOP is NP-hard}
\label{s_np-hard_proof}
In TOP, a trivial feasible solution consists of a set of empty routes, one for each vehicle. This solution can be easily improved by inserting vertices while not exceeding the routes' duration limit. In fact, the heuristics in the literature of TOP make use of this simple procedure, usually adopting greedy criteria to iteratively add vertices to the empty routes \citep{Chao96,Archetti07,Vansteenwegen09,Souffriau10,Lin13,Kim13}. Once we consider mandatory vertices (as in STOP), a trivial solution with empty routes is no longer feasible.
Formally, consider the optimization problem of finding (building) a feasible solution for an arbitrary STOP instance, namely Feasibility STOP (FSTOP), defined as follows.
\begin{description}[leftmargin=2cm]
\item[(FSTOP)]
\begin{description}[leftmargin=1.45cm]
\item[Input:] An STOP instance $\langle G,S,P,s,t,p,d,M,T \rangle$.
\item[Output:] A feasible solution (if any) for the input STOP instance.
\end{description}
\end{description}

\noindent Notice that FSTOP is not interested in deciding if a given STOP instance is feasible, but in actually determining a feasibility certificate (i.e., an actual solution) when the case. 

In this section, we show that FSTOP is NP-hard by proving that its decision version, namely Decision FSTOP (D-FSTOP), is already NP-hard.
To this end, we reduce the Hamiltonian Path Problem (HPP) --- which is known to be NP-complete \citep{Garey79} --- to FSTOP.
\begin{description}[leftmargin=2.5cm]
\item[(D-FSTOP)]
\begin{description}[leftmargin=1.45cm]
\item[Input:] An STOP instance $\langle G,S,P,s,t,p,d,M,T \rangle$.
\item[Question:] Is there a feasible solution for the input STOP instance?
\end{description}
\end{description}

\begin{description}[leftmargin=1.45cm]
\item[(HPP)]
\begin{description}[leftmargin=1.45cm]
\item[Input:] A digraph $G=(N,A)$, where $N$ is the vertex set, and $A$ is the arc set. An origin vertex $s \in N$ and a destination vertex $t \in N$. For short, $\langle G,s,t \rangle$.
\item[Question:] Is there an Hamiltonian path from $s$ to $t$ in $G$?
\end{description}
\end{description}

\begin{theorem}
\label{teo_reduction}
HPP $\leq_p$ D-FSTOP, i.e., HPP is polynomial-time reducible to D-FSTOP.
\end{theorem}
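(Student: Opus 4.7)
The plan is to exhibit a polynomial-time many-one reduction from HPP. Given an HPP instance $\langle G, s, t\rangle$ with $G=(N,A)$, I would build the STOP instance $\langle G, S, P, s, t, p, d, M, T \rangle$ where $G$, $s$, and $t$ are copied verbatim, $S := N\setminus\{s,t\}$ (so every internal vertex is mandatory), $P := \emptyset$ (no profitable vertices, so $p$ is trivial), $d_{ij} := 1$ for every arc $(i,j) \in A$, $|M| := 1$ (a single vehicle), and $T := |N|-1$. This construction is clearly computable in time polynomial in the size of $\langle G, s, t\rangle$, so it remains to verify equivalence of the two decision questions.

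For the forward direction, I would argue that if $G$ admits a Hamiltonian path $s = v_0, v_1, \ldots, v_{|N|-1} = t$, then this path uses exactly $|N|-1$ arcs of unit cost, hence has duration $|N|-1 = T$, visits every mandatory vertex in $S$ exactly once, and constitutes a valid single-vehicle STOP route from $s$ to $t$. Hence the constructed STOP instance is feasible.

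For the converse direction, I would take any feasible solution of the STOP instance. Because $|M|=1$, it consists of a single route from $s$ to $t$ of total duration at most $T = |N|-1$. Since every arc has unit cost, the route uses at most $|N|-1$ arcs, and consequently traverses at most $|N|$ vertex positions (counting multiplicity). On the other hand, feasibility forces the route to cover $s$, $t$, and all $|N|-2$ mandatory vertices of $S$, i.e., at least $|N|$ distinct vertex occurrences. Combining the two bounds, each vertex of $N$ is visited exactly once, so the route is a Hamiltonian path from $s$ to $t$ in $G$, settling the reduction.

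The main obstacle I anticipate is a subtle one: the argument above tacitly relies on the convention that no vertex may be revisited within a single STOP route (otherwise the counting step that forces a simple Hamiltonian path would need to be rephrased). Fortunately, under unit arc times and $T=|N|-1$, the time budget already rules out any revisits when all $|N|-2$ mandatory vertices must be covered, so the argument goes through regardless of whether one models routes as simple paths or as walks. I would state this observation explicitly in the proof to avoid any ambiguity, and conclude that D-FSTOP is NP-hard since HPP reduces to it in polynomial time.
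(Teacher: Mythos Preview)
Your proposal is correct and follows essentially the same reduction as the paper: same graph, single vehicle, all non-terminal vertices mandatory, unit arc times, and $T=|N|-1$ (the paper writes $T\geq |N|-1$, which is equivalent here). Your treatment of the converse direction is actually more careful than the paper's, which simply appeals to the STOP convention that a route visits each vertex at most once; your counting argument via the tight time budget makes the conclusion hold even if one were agnostic about that convention.
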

\begin{proof}
First, we propose a straightforward polynomial time reduction of HPP instances to D-FSTOP ones. Given an HPP instance $\langle G,s,t \rangle$, we build a corresponding D-FSTOP instance (which is also an STOP instance) by considering the same digraph $G$, where $s$ and $t$ are its origin and destination vertices as well. A single vehicle is considered, and all the vertices of $G$ are set as mandatory (except for $s$ and $t$). Accordingly, the set of profitable vertices is empty, and the traverse time vector is set to $d = \textbf{1}$. The time limit $T$ is a sufficiently large number (say, $T \geq |N|-1$), as to allow the selection of all mandatory vertices in a route.

Now, we only have to show that an HPP instance has an Hamiltonian path from $s$ to $t$ if, and only if, the corresponding D-FSTOP instance (built as described above) gives a ``yes'' answer. The validity of this proposition is quite intuitive, as (i) the pair of HPP and D-FSTOP instances shares the same graph, (ii) the time limit $T$ allows that all vertices belong to an STOP solution, and (iii) by definition, the single STOP route must visit each vertex exactly once, which defines an Hamiltonian path.
\end{proof}
% 
% \begin{corollary}
% D-FSTOP is NP-hard.
% \end{corollary}
% \begin{proof}
% Directly from Theorem~\ref{teo_reduction} and the NP-completeness of HPP.
% \end{proof}

\begin{corollary}
\label{corol_np_hard_feasibility}
D-FSTOP and, thus, FSTOP, are NP-hard.
\end{corollary}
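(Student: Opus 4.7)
The plan is to invoke Theorem~\ref{teo_reduction} together with the classical NP-completeness of HPP, and then argue that the optimization version FSTOP inherits NP-hardness from its decision counterpart D-FSTOP via a trivial search-to-decision reduction.

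First, I would recall that HPP is NP-complete (cited in the excerpt as \cite{Garey79}), and therefore NP-hard. Theorem~\ref{teo_reduction} established that HPP $\leq_p$ D-FSTOP, so by transitivity of polynomial-time reducibility any polynomial-time algorithm for D-FSTOP would yield one for HPP. Consequently D-FSTOP is NP-hard.

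Next, I would handle FSTOP by showing D-FSTOP $\leq_p$ FSTOP. Given an instance $\langle G,S,P,s,t,p,d,M,T \rangle$ of D-FSTOP, pass it unchanged to an FSTOP oracle. The oracle either returns a concrete feasible solution (answer \textbf{yes}) or reports that none exists (answer \textbf{no}). This reduction is clearly polynomial, and it shows that FSTOP is at least as hard as D-FSTOP. Combined with the previous step, FSTOP is NP-hard as well.

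I do not anticipate a real obstacle here: the corollary is essentially a bookkeeping step that chains Theorem~\ref{teo_reduction} with the known hardness of HPP and the standard observation that producing a witness is at least as hard as deciding existence. The only subtlety worth a sentence of justification is making explicit the search-to-decision direction, since D-FSTOP is the decision variant of FSTOP and the paper defines the two problems separately rather than as a single optimization/decision pair.
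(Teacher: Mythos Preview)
Your proposal is correct and matches the paper's approach: the paper treats the corollary as an immediate consequence of Theorem~\ref{teo_reduction} together with the NP-completeness of HPP, and offers no separate proof. You simply spell out the two implicit steps (hardness of D-FSTOP via the reduction, then hardness of FSTOP via the trivial search-to-decision reduction), which is exactly the intended reasoning.
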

% \begin{proof}
% Naturally, finding a feasible solution for a STOP instance (FSTOP) is at least as hard as determining if there exists such solution (D-FSTOP).
% \end{proof}

As a consequence of Corollary~\ref{corol_np_hard_feasibility}, heuristics for STOP are subject to an additional challenge, which consists in building an initial feasible solution in reasonable time. To address this issue, we make use of the FP matheuristic of \cite{Fischetti2005}, which is described in Section~\ref{s_FP}. As a matheuristic, FP uses a mathematical formulation to guide its search for a feasible solution. Then, before describing FP, we provide an MILP formulation for STOP and discuss some families of inequalities used to further strengthen it.

\section{Mathematical foundation}
\label{s_models}
In the MILP formulation proposed by \cite{Assuncao2019} for STOP --- hereafter denoted by $\mathcal{F}$ --- time is treated as a commodity to be spent by the vehicles when traversing each arc in their routes, such that every vehicle departs from $s$ with an initial amount of $T$ units of commodity, the time limit.

Before defining $\mathcal{F}$, let $R_{ij}$ denote the minimum time needed to reach a vertex $j$ when departing from a vertex $i$ in the graph $G$ of a given STOP instance, i.e., $R_{ij} = \min\{\sum\limits_{a \in A_p}{d_a}:\,\mbox{$p$ is a path from $i$ to $j$ in $G$}\}$. Accordingly, $R_{ii}$ = 0 for all $i \in N$, and, if no path exists from a vertex to another, the corresponding entry of $R$ is set to infinity. One may observe that $R$ is not necessarily symmetric, since ${G}$ is directed. Moreover, considering that the traverse times associated with the arcs of ${G}$ are non-negative (and, thus, no negative cycle exists), this $R$ matrix can be computed \emph{a priori} (for each instance) by means of the classical dynamic programming algorithm of Floyd-Warshall \citep{Cormen2001}, for instance.

We introduce the following sets of variables.
The binary variables $y$ and $x$ identify the solution routes, such that $y_{i} = 1$ if the vertex $i \in N$ is visited by a vehicle
of the fleet ($y_{i} = 0$, otherwise), and $x_{ij} = 1$ if the arc $(i,j) \in A$ is traversed in the solution ($x_{ij} = 0$, otherwise).
In addition, the continuous flow variables $f_{ij}$, for all $(i,j) \in A$, represent the amount of time still available for a vehicle after traversing the arc $(i,j)$ as not to exceed $T$. The slack variable $\varphi$ represents the number of vehicles that are not used in the solution.
$\mathcal{F}$ is defined from (\ref{csc100}) to (\ref{csc114}).

\begin{eqnarray}
  \mbox{($\mathcal{F}$)\quad}\max && \sum \limits_{i \in P}{p_{i}y_{i}}, \label{csc100}\\
  s.t. && y_i = 1 \qquad \forall\, i \in S\cup\{s,t\}, \label{b101} \\
	&& \sum \limits_{j \in \delta^{+}(i)}{x_{ij}} = y_i \qquad \forall\, i \in S\cup P, \label{b102} \\  
	&& \sum \limits_{j \in \delta^{+}(s)}{x_{sj}} = \sum \limits_{i \in \delta^{-}(t)}{x_{it}} = m - \varphi, \label{b103} \\
	&& \sum \limits_{i \in \delta^{-}(s)}{x_{is}} = \sum \limits_{j \in \delta^{+}(t)}{x_{tj}} = 0, \label{b104} \\
	&& \sum \limits_{j \in \delta^{+}(i)}{x_{ij}} - \sum \limits_{j \in \delta^{-}(i)}{x_{ji}} = 0 \qquad \forall\, i \in S\cup P,\label{b105} \\
	&& f_{sj} = (T-d_{sj})x_{sj} \qquad \forall\, j \in \delta^+(s), \label{csc107}\\
	&& \sum \limits_{j \in \delta^{-}(i)}{f_{ji}} - \sum \limits_{j \in \delta^{+}(i)}{f_{ij}} = \sum \limits_{j \in \delta^{+}(i)}{d_{ij}x_{ij}}\qquad \forall\, i \in S\cup P,\label{csc108} \\
	&& f_{ij} \leq (T- R_{si} - d_{ij})x_{ij} \qquad \forall (i,j) \in A,\, i \neq s, \label{csc109} \\
	&& f_{ij} \geq R_{jt}x_{ij} \qquad \forall (i,j) \in A, \label{csc110} \\
    && x_{ij} \in \{0,1\} \qquad \forall (i,j) \in A, \label{csc111} \\
    && y_{i} \in \{0,1\} \qquad \forall i\, \in N, \label{csc112} \\
    && f_{ij} \geq 0 \qquad \forall (i,j) \in A, \label{csc113} \\
    && 0 \leq \varphi \leq m. \label{csc114}
\end{eqnarray}

The objective function in (\ref{csc100}) gives the total reward collected by visiting profitable vertices.
Constraints (\ref{b101}) impose that all mandatory vertices (as well as $s$ and $t$) are selected, while constraints (\ref{b102}) ensure that each vertex in $S \cup P$ is visited at most once. Restrictions (\ref{b103}) ensure that at most $m$ vehicles
leave the origin $s$ and arrive at the destination $t$, whereas constraints (\ref{b104}) impose that vehicles cannot arrive at
$s$ nor leave $t$. Moreover, constraints (\ref{b105}), along with constraints (\ref{b101}) and (\ref{b102}), guarantee that,
if a vehicle visits a vertex $i \in S \cup P$, then it must enter and leave this vertex exactly once.
Constraints (\ref{csc107})-(\ref{csc109}) ensure that each of the solution routes has a total traverse time of at most $T$. Precisely,
%constraint (\ref{csc106}) specifies that the flow available at the origin $s$ is exactly the amount consumed along the solution routes.
restrictions (\ref{csc107}) implicitly state, along with (\ref{b103}), that the total flow available at the origin $s$ is $(m-\varphi)T$, and, in particular, each vehicle (used) has an initial amount of $T$
units of flow. Constraints (\ref{csc108}) manage the flow consumption incurred from traversing the arcs selected, whereas constraints (\ref{csc109}) impose that an arc $(i,j) \in A$ can only be traversed if the minimum time of a route from $s$ to $j$ through $(i,j)$ does not exceed $T$. In (\ref{csc109}), we do not consider the arcs leaving the origin, as they are already addressed by (\ref{csc107}).
Restrictions (\ref{csc110}) are, in fact, valid inequalities that give lower bounds on the flow passing through each arc, and constraints (\ref{csc111})-(\ref{csc114}) define the domain of the variables. Here, the management of the flow associated with the variables $f$ also avoids the existence of sub-tours in the solutions.

Although the $y$ variables can be easily discarded --- as they solely aggregate specific subsets of the $x$ variables --- they enable us to represent some families of valid inequalities (as detailed in Section~\ref{s_cuts}) by means of less dense cuts. For this reason, they are preserved. Also notice that the dimension of formulation $\mathcal{F}$ (both in terms of variables and constraints) is $\BigO{|A|+|N|}$, being completely independent from the number of vehicles available.

\subsection{Families of valid inequalities}
\label{s_cuts}
In this section, we discuss three families of valid inequalities able to reinforce $\mathcal{F}$. They consist of clique conflict cuts, a class of the so-called arc-vertex inference cuts and classical lifted cover inequalities based on dual bounds. %While the two first types of cuts are actual valid inequalities, the cover inequalities are, in fact, \emph{logic cuts} (see, e.g., \cite{Hooker94a,Hooker94b,Fischetti98}), as they might cut off feasible integer solutions.
To the best of our knowledge, the first and second classes of inequalities are also introduced in this work. After the description of each class of inequality, we detail their corresponding separation procedures, which are called within the cutting-plane algorithm later discussed in Section~\ref{s_cutting-plane}. Hereafter, the linear relaxation of $\mathcal{F}$ is denoted by $\mathcal{L}$.

\subsubsection{Clique Conflict Cuts (CCCs)}
Consider the set $\mathcal{K}$ of vertex pairs which cannot be simultaneously in a same valid route. Precisely, for every pair $\{ i,j\} \in \mathcal{K}$, with $i,j \in N\backslash\{s,t\}$, we have that any route from $s$ to $t$ that visits $i$ and $j$ (in any order) has a total traverse time that exceeds the limit $T$.
Accordingly, consider the undirected conflict graph $G_c = (N\backslash\{s,t\},\mathcal{K})$. %representing the pairs of conflicting vertices in $\mathcal{K}$, such that $E_c = \{\{i,j\}:\, \{ i,j \} \in \mathcal{K}\}$. 
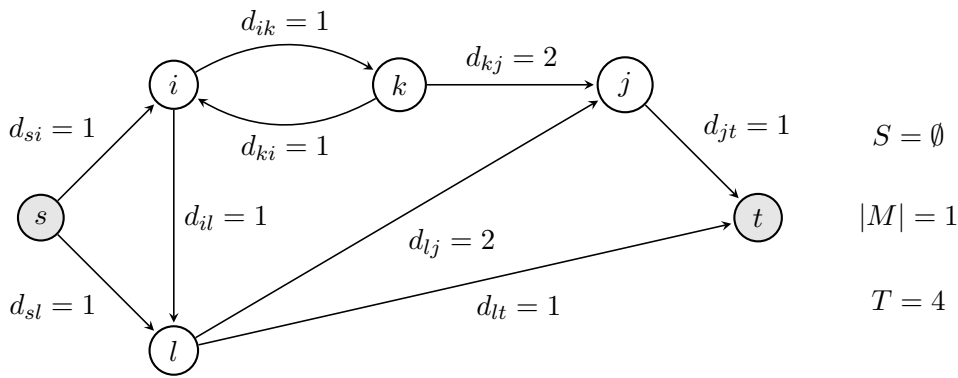
\begin{figure}[!ht]
\centering
  \begin{tikzpicture}[
            > = stealth, % arrow head style
            shorten > = 1pt, % don't touch arrow head to node
            auto,
            node distance = 2.5cm, % distance between nodes
            semithick % line style
        ]

        \tikzstyle{every state}=[
            draw = black,
            thick,
            fill = white,
            minimum size = 4mm
        ]

        \node[state] (s) [fill = gray!20]{$s$};
        \node[state] (i) [above right of=s] {$i$};
        \node[state] (l) [below right of=s] {$l$};
        \node[state] (k) [right of=i,node distance = 3cm] {$k$};
        \node[state] (j) [right of=k,node distance = 3cm] {$j$};
        \node[state] (t) [below right of=j,fill = gray!20] {$t$};
        
        %\node[state] (jt) [below left of = s, draw = none, node distance = 3cm]{$d_{jt}= 1$};
        %\node[state] (lt) [above of = jt, draw = none, node distance = 0.95cm]{$d_{lt}= 1$};
        %\node[state] (lj) [above of = lt, draw = none, node distance = 0.95cm]{$d_{lj}= 1$};
        %\node[state] (kj) [above of = lj, draw = none, node distance = 0.95cm]{$d_{kj}= 2$};
        %\node[state] (ik) [above of = kj, draw = none, node distance = 0.95cm]{$d_{ik}= 1$};
        %\node[state] (sl) [above of = ik, draw = none, node distance = 0.95cm]{$d_{sl}= 1$};
        %\node[state] (si) [above of = sl, draw = none, node distance = 0.95cm]{$d_{si}= 1$};
        
        \node[state] (M) [right of = t, draw = none, node distance = 2cm]{$|M| = 1$};
        \node[state] (S) [above of = M, fill= none, draw = none, node distance = 1.1cm]{$S = \emptyset$};
         \node[state] (T) [below of = M, draw = none, node distance = 1.1cm]{$T = 4$};
    
        \path[->] (s) edge node [above left] {$d_{si}= 1$} (i);
        \path[->] (s) edge node [below left] {$d_{sl}= 1$} (l);
        \path[->] (l) edge node [below right] {$d_{lj}= 2$} (j);
        \path[->] (i) edge [bend left] node [above] {$d_{ik}= 1$} (k);
        \path[->] (k) edge [bend left] node [below] {$d_{ki}= 1$} (i);
        \path[->] (k) edge node [above] {$d_{kj}= 2$} (j);
        \path[->] (j) edge node [above right] {$d_{jt}= 1$} (t);
        \path[->] (l) edge node [below right] {$d_{lt}= 1$} (t);
        \path[->] (i) edge node {$d_{il}= 1$} (l);
    
    \end{tikzpicture}
\caption{An example of an STOP instance. Profit values are omitted.}
\label{fig:ex}
\end{figure}

Consider the STOP instance defined by the digraph shown in Figure~\ref{fig:ex}, whose arcs have traverse times $d_{si} = d_{sl} = d_{il} = d_{ik} = d_{ki} = d_{lt} = d_{jt} = 1$ and $d_{kj} = d_{lj} = 2$. In this case, $S = \emptyset$ (i.e., there is no mandatory vertex), and a single vehicle is available to move from $s$ to $t$, with $T= 4$. The conflict graph related to this instance is given in Figure~\ref{fig:ex:conflictgraph}.

\begin{figure}[!ht]
\centering
\begin{tikzpicture}[
            > = stealth, % arrow head style
            %shorten > = 1pt, % don't touch arrow head to node
            auto,
            node distance = 1.5cm, % distance between nodes
            semithick % line style
        ]

\def \n {6}
\def \radius {1.7cm}
\def \margin {10} % margin in angles, depends on the radius

        \tikzstyle{every state}=[
            draw = black,
            thick,
            fill = white,
            minimum size = 4mm
        ]

        \node[state] (i) {$i$};
        \node[state] (l) [below of=i] {$l$};
        \node[state] (k) [right of=i,node distance = 3cm] {$k$};
        \node[state] (j) [right of=k,node distance = 3cm] {$j$};
\path[-] (i) edge node {} (k);
\path[-] (j) edge [bend right] node {} (i);
\path[-] (j) edge node {} (k);
\path[-] (k) edge node {} (l);

\end{tikzpicture}
\caption{Conflict graph related to the STOP instance of Figure~\ref{fig:ex}.}
\label{fig:ex:conflictgraph}
\end{figure}
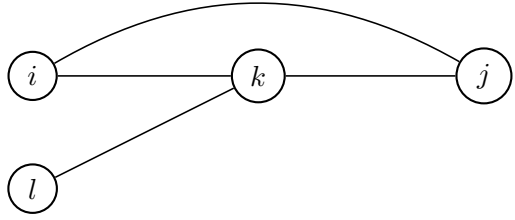

Now, let $\Sigma$ be the set of all the cliques of $G_c$, which are referred to as \emph{conflict cliques}. Then, CCCs are defined as
\begin{eqnarray}
\sum\limits_{e \in \delta^-(V)}{x_{e}} \geq \sum\limits_{i \in \sigma}y_{i} \qquad \forall\; \sigma \in \Sigma,\; \forall\; V \subseteq N\backslash\{s\},\, \sigma \subseteq V, \label{ccc00}\\
\sum\limits_{e \in \delta^+(V)}{x_{e}} \geq \sum\limits_{i \in \sigma}y_{i}  \qquad \forall\; \sigma \in \Sigma,\; \forall\; V \subseteq N\backslash\{t\},\, \sigma \subseteq V.\label{ccc01}
\end{eqnarray}

\noindent Intuitively speaking, CCCs state that the number of visited vertices from any conflict clique gives a lower bound on the number of routes needed to make a solution feasible for STOP. In other words, vertices of a conflict clique must necessarily belong to different routes in any feasible solution for $\mathcal{F}$. The formal proof of the validity of CCCs is given as follows.

\begin{proposition}
\label{prop_cccs}
Inequalities (\ref{ccc00}) do not cut off any feasible solution of $\mathcal{F}$.
\end{proposition}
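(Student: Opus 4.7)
The plan is to argue directly from the route structure induced by a feasible solution of $\mathcal{F}$. Let $(\bar{x},\bar{y},\bar{f},\bar{\varphi})$ be an arbitrary feasible solution, and fix a conflict clique $\sigma\in\Sigma$ together with $V\subseteq N\setminus\{s\}$ with $\sigma\subseteq V$. I will show that the right-hand side $\sum_{i\in\sigma}\bar{y}_i$ counts vertices of $\sigma$ that necessarily lie in pairwise distinct routes of the solution, and that each such route must cross $\delta^-(V)$ at least once; the bound then follows by summing.

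First I would establish a route-decomposition lemma (either in-line or by citing the formulation): constraints (\ref{b101})--(\ref{b105}) guarantee that the arcs with $\bar{x}_{ij}=1$ form $m-\bar{\varphi}$ arc-disjoint $s$--$t$ paths covering every visited vertex exactly once, and the flow constraints (\ref{csc107})--(\ref{csc110}) force each such path to have total traverse time at most $T$ (sub-tours are ruled out by the same flow balance). Denote these paths by $\pi_1,\dots,\pi_{m-\bar{\varphi}}$. Next, set $\sigma'=\{i\in\sigma:\bar{y}_i=1\}$, so that $\sum_{i\in\sigma}\bar{y}_i=|\sigma'|$. By the very definition of $\mathcal{K}$, no route satisfying the duration limit $T$ can contain two vertices of $\sigma$; hence the vertices of $\sigma'$ lie in $|\sigma'|$ pairwise distinct routes among $\pi_1,\dots,\pi_{m-\bar{\varphi}}$.

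To conclude, I would observe that each of these $|\sigma'|$ distinct routes starts at $s\notin V$ and visits a vertex of $\sigma'\subseteq V$, so it must contain at least one arc in $\delta^-(V)$. Since the routes are arc-disjoint, summing their contributions yields
\begin{equation*}
\sum_{e\in\delta^-(V)}\bar{x}_e \;\geq\; |\sigma'| \;=\; \sum_{i\in\sigma}\bar{y}_i,
\end{equation*}
which is exactly (\ref{ccc00}). The symmetric inequality (\ref{ccc01}) follows by the same reasoning, using that each route ends at $t\notin V$ and must therefore cross $\delta^+(V)$ at least once. The main obstacle I anticipate is making the route-decomposition step rigorous and self-contained: one must verify that the flow variables really enforce a partition into valid time-feasible paths rather than, say, a fractional combination or a structure containing sub-tours. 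Once that structural fact is pinned down, the counting argument via the clique property is essentially immediate.
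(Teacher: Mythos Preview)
Your proposal is correct and follows essentially the same approach as the paper: both arguments rely on the fact that visited vertices of a conflict clique must lie in pairwise distinct routes, each of which must enter $V$ from $s\notin V$ via some arc in $\delta^-(V)$. If anything, your version is more explicit than the paper's, which simply asserts in its second case that an appropriate arc set $A'\subseteq\delta^-(V)$ of size $\sum_{i\in\sigma}\bar{y}_i$ exists without spelling out the route-decomposition and arc-disjointness that you (rightly) identify as the structural core.
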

\begin{proof}
Consider an arbitrary feasible solution $(\bar{x},\bar{y},\bar{f},\bar{\varphi})$ for $\mathcal{F}$, a conflict clique $\sigma \in \Sigma$ and a subset $V \subseteq N \backslash \{s\}$, with $\sigma \subseteq V$. Then, we have two possibilities:
\begin{enumerate}
\item if $\sum\limits_{i \in \sigma}{\bar y}_{i} = 0$, then $\overbrace{\sum\limits_{e \in \delta^-(V)}{x_{e}}}^{\geq\,0} \geq \overbrace{\sum\limits_{i \in \sigma}y_{i}}^{=\,0}$.
\item if $\sum\limits_{i \in \sigma}{\bar y}_{i} > 0$, then, since $s \notin V$, there must be a subset of arcs $A' \subseteq \delta^-(V)$, with $\bar{x}_{e} = 1$ for all $e \in A'$, and $|A'| = \sum\limits_{i \in \sigma}{\bar y}_{i}$, so that every vertex $i \in \sigma$ having ${\bar y}_i =1$ is traversed in a different route from $s$. Then, also in this case, $\overbrace{\sum\limits_{e \in \delta^-(V)}{x_{e}}}^{\geq\,|A'|} \geq \overbrace{\sum\limits_{i \in \sigma}y_{i}}^{=\,|A'|}$. \qedhere
\end{enumerate}
\end{proof}

\begin{corollary}
\label{corol02}
Inequalities (\ref{ccc01}) do not cut off any feasible solution of $\mathcal{F}$.
\end{corollary}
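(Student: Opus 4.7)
The plan is to mirror the argument of Proposition~\ref{prop_cccs} almost verbatim, exploiting the symmetry between incoming and outgoing cuts. Specifically, for an arbitrary feasible solution $(\bar{x},\bar{y},\bar{f},\bar{\varphi})$ of $\mathcal{F}$, a conflict clique $\sigma \in \Sigma$, and a subset $V \subseteq N\backslash\{t\}$ with $\sigma \subseteq V$, I would again split into the two cases $\sum_{i\in\sigma}\bar{y}_i = 0$ and $\sum_{i\in\sigma}\bar{y}_i > 0$. The first case is trivial since the left-hand side is non-negative. For the second case, the key point is that because $t \notin V$, every route that visits a vertex of $\sigma$ (necessarily ending at $t$) must cross $\delta^+(V)$ at least once, and since any two visited vertices of a conflict clique must lie in \emph{distinct} routes (by definition of $\Sigma$), the arcs used by these distinct routes to exit $V$ are pairwise different.

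First, I would note explicitly that the defining property of a conflict clique $\sigma$ — namely, that no two of its vertices can coexist in a valid route — forces each visited vertex of $\sigma$ into its own route. Then, using constraints (\ref{b102}), (\ref{b103}), (\ref{b104}) and (\ref{b105}), which make the $x$-supported subgraph a disjoint union of $s$-to-$t$ paths, I would argue that each of these routes contributes at least one arc to $\delta^+(V)$. This yields a subset $A'\subseteq \delta^+(V)$ of arcs with $\bar{x}_e = 1$ and $|A'|=\sum_{i\in\sigma}\bar{y}_i$, which establishes the inequality.

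Alternatively — and this may be the cleanest way to avoid duplication — I would apply a reverse-graph reduction: given the STOP instance and a feasible solution, construct the reversed instance by reversing every arc (with the same traverse times) and swapping the roles of $s$ and $t$. The reversed solution is still feasible, conflict cliques are preserved, and $\delta^+(V)$ in the original graph is precisely $\delta^-(V)$ in the reversed graph. Applying Proposition~\ref{prop_cccs} to the reversed instance then directly yields inequality (\ref{ccc01}) for the original. I expect no real obstacle here, as the argument is fully symmetric; the only subtle point to verify is that the flow constraints (\ref{csc107})--(\ref{csc110}) also admit a symmetric feasible assignment under arc reversal, but this is immediate since these constraints merely enforce the route-duration bound $T$, which is preserved by reversal.
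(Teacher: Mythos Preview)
Your proposal is correct and follows essentially the same approach as the paper: the paper's proof is a one-liner stating that the argument of Proposition~\ref{prop_cccs} carries over by replacing $s$ and $\delta^-(V)$ with $t$ and $\delta^+(V)$, which is precisely your first (case-splitting) approach with the details spelled out. Your alternative reverse-graph reduction is just a slightly more formal packaging of the same symmetry and is equally valid.
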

\begin{proof}
The same mathematical argumentation of Proposition~\ref{prop_cccs} can be used to prove the validity of inequalities (\ref{ccc01}) by simply replacing $s$ and $\delta^-(V)$ with $t$ and $\delta^+(V)$, respectively.  
\end{proof}

Notice that, from (\ref{b105}), we have that $\sum\limits_{e \in \delta^-(V)}{x_{e}} = \sum\limits_{e \in \delta^+(V)}{x_{e}}$ for all $V \subseteq S \cup P = N\backslash\{s,t\}$. Therefore, for all $V \subseteq N\backslash\{s,t\}$, inequalities (\ref{ccc00}) and (\ref{ccc01}) cut off the exact same regions of the polyhedron $\mathcal{L}$, the linearly relaxed version of $\mathcal{F}$.. In this sense, the whole set of CCCs can be represented in a more compact manner
by (\ref{ccc00}) and 
\begin{eqnarray}
\sum\limits_{e \in \delta^+(V)}{x_{e}} \geq \sum\limits_{i \in \sigma}y_{i} \qquad \forall\; \sigma \in \Sigma,\; \forall\; V \subseteq N \backslash \{t\},\, \sigma \cup \{s\} \subseteq V.\label{ccc02}
\end{eqnarray}

\begin{theorem}
\label{teo_ccc_dominance}
For every non-maximal conflict clique $\sigma_1 \in \Sigma$, there is a clique $\sigma_2 \in \Sigma$, with $\sigma_1 \subset \sigma_2$, such that the CCCs referring to $\sigma_1$ are dominated by the ones referring to $\sigma_2$, when considering the polyhedron $\mathcal{L}$.
\end{theorem}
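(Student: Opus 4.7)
The plan is to exhibit, for each non-maximal conflict clique $\sigma_1$, a concrete larger clique $\sigma_2$ and, for every admissible $V$ appearing in the CCCs based on $\sigma_1$, a matching $V'$ admissible for $\sigma_2$ such that the CCC of $(\sigma_2,V')$ together with the model constraints of $\mathcal{L}$ implies the CCC of $(\sigma_1,V)$. Since $\sigma_1$ is non-maximal in the conflict graph $G_c$, by definition there is a vertex $v\in N\setminus\{s,t\}$ adjacent in $G_c$ to every vertex of $\sigma_1$, so $\sigma_2:=\sigma_1\cup\{v\}\in\Sigma$. The argument then splits into the two CCC families.

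For the family (\ref{ccc00}), fix an arbitrary $V\subseteq N\setminus\{s\}$ with $\sigma_1\subseteq V$. I would split into two subcases. If $v\in V$, then $\sigma_2\subseteq V$ and the CCC of $(\sigma_2,V)$ reads $\sum_{e\in\delta^-(V)}x_e\geq\sum_{i\in\sigma_1}y_i+y_v$, which, using $y_v\geq 0$ from (\ref{csc112}), is strictly stronger than the CCC of $(\sigma_1,V)$. If $v\notin V$, I would take $V':=V\cup\{v\}\subseteq N\setminus\{s\}$; note $\sigma_2\subseteq V'$. The decomposition
\begin{equation*}
\sum_{e\in\delta^-(V)}x_e \;=\; \sum_{e\in\delta^-(V')}x_e \;+\; \sum_{\substack{j\in V\\(v,j)\in A}}x_{vj} \;-\; \sum_{\substack{i\notin V'\\(i,v)\in A}}x_{iv}
\end{equation*}
follows directly from how the cut-set changes when $v$ is moved from outside to inside. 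Bounding the last (negative) term from below using flow conservation (\ref{b105}) and (\ref{b102}) at $v$, namely $\sum_{i\in\delta^-(v)}x_{iv}=y_v$, and dropping the non-negative middle term, gives $\sum_{e\in\delta^-(V)}x_e \geq \sum_{e\in\delta^-(V')}x_e - y_v$. Combining this with the CCC of $(\sigma_2,V')$ yields $\sum_{e\in\delta^-(V)}x_e\geq \sum_{i\in\sigma_1}y_i+y_v-y_v=\sum_{i\in\sigma_1}y_i$, which is the CCC of $(\sigma_1,V)$.

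For the family (\ref{ccc02}), the argument is fully symmetric: fix $V\subseteq N\setminus\{t\}$ with $\sigma_1\cup\{s\}\subseteq V$ and split on whether $v\in V$ or not. The case $v\in V$ is immediate; in the case $v\notin V$, I would take $V':=V\cup\{v\}$ and apply the analogous identity for $\delta^+(V)$ versus $\delta^+(V')$, again using $\sum_{j\in\delta^+(v)}x_{vj}=y_v$ from (\ref{b102}) to dominate the negative correction term. Since every CCC of the form (\ref{ccc00}) or (\ref{ccc02}) based on $\sigma_1$ is thus implied on $\mathcal{L}$ by some CCC based on $\sigma_2$ together with existing constraints of $\mathcal{L}$, dominance follows.

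The routine parts are the case $v\in V$ and the basic cut-set identity; the main obstacle, which the sketch above resolves using flow conservation at $v$, is the case $v\notin V$, where the change of $V$ to $V'=V\cup\{v\}$ removes arcs $(v,\cdot)$ from the cut and adds arcs $(\cdot,v)$ outside $V'$, and one must verify that this net modification is absorbed by the slack $y_v$ that appears on the right-hand side of the stronger CCC.
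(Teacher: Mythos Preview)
Your proof is correct and follows essentially the same route as the paper: you pick $\sigma_2=\sigma_1\cup\{v\}$, split on whether $v\in V$, and in the nontrivial case use the cut-set identity for $V'=V\cup\{v\}$ together with $\sum_{i\in\delta^-(v)}x_{iv}=\sum_{j\in\delta^+(v)}x_{vj}=y_v$ (from (\ref{b102}) and (\ref{b105})) to absorb the correction term, exactly as the paper does. The only cosmetic difference is that the paper argues by contrapositive (a point violating a $\sigma_1$-CCC also violates a $\sigma_2$-CCC) whereas you derive each $\sigma_1$-CCC directly from a $\sigma_2$-CCC plus model constraints; also, you handle the outgoing family via (\ref{ccc02}) rather than (\ref{ccc01}), which is equivalent over $\mathcal{L}$ by the paper's own remark preceding (\ref{ccc02}).
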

\begin{proof}
Consider an arbitrary non-maximal conflict clique $\sigma_1 \in \Sigma$ and, without loss of generality, a solution $(\bar{x},\bar{y},\bar{f},\bar{\varphi})$ for $\mathcal{L}$ that is violated by at least one of the CCCs referring to $\sigma_1$. We have to show that there exists a clique $\sigma_2 \in \Sigma$, with $\sigma_1 \subset \sigma_2$, whose corresponding CCCs also cut off $(\bar{x},\bar{y},\bar{f},\bar{\varphi})$ from $\mathcal{L}$. To this end, consider a vertex $k \in (S \cup P) \backslash \sigma_1$, such that $\{k,j\} \in \mathcal{K}$ for all $j \in \sigma_1$. Such vertex exists, since $\sigma_1$ is supposed to be non-maximal. Then, define the conflict clique $\sigma_2 = \sigma_1 \cup \{k\}$.

By assumption, $(\bar{x},\bar{y},\bar{f},\bar{\varphi})$ violates at least one of the CCCs (\ref{ccc00}) and (\ref{ccc01}) referring to $\sigma_1$. Then, we must have at least one of these two possibilities:
\begin{enumerate}
\item $\exists\, V' \subseteq N\backslash\{s\}$, with $\sigma_1 \subseteq V'$, such that $\sum\limits_{e \in \delta^-(V')}{{\bar x}_{e}} < \sum\limits_{i \in \sigma_1}{\bar y}_{i}$. 

Define the set $V'' = V'\cup\{k\}$. Notice that, if we originally have $k \in V'$, then the CCC (\ref{ccc00}) considering $\sigma_2$ and $V=V'(=V''$ in this case) is also violated by $(\bar{x},\bar{y},\bar{f},\bar{\varphi})$.
Otherwise, if $k \notin V'$, then
\begin{align}
   \sum\limits_{e \in \delta^-(V'')}{{\bar x}_{e}}= \sum\limits_{e \in \delta^-(V')}{{\bar x}_{e}} + \overbrace{\sum\limits_{i \in \delta^-(k)\backslash V'}{{\bar x}_{ik}}}^{(g)} - \overbrace{\sum\limits_{j \in \delta^+(k)\cap V'}{{\bar x}_{kj}}}^{(h)} \label{proof_dominance00},
\end{align}
\noindent which follows from the fact that the difference between the summation of arcs entering $V''$ and that of arcs entering $V'$ corresponds to the difference between (g) the sum of arcs arriving at $k$ that do not leave a vertex of $V'$ and (h) the sum of arcs leaving $k$ that arrive at a vertex of $V'$. In other words, (g) considers the arcs that traverse the cut $(N\backslash V'',V'')$ but do not traverse $(N\backslash V',V')$, while (h) considers the arcs that do not traverse the cut $(N\backslash V'',V'')$ but traverse $(N\backslash V',V')$.

From (\ref{proof_dominance00}) and the hypothesis that $\sum\limits_{e \in \delta^-(V')}{{\bar x}_{e}} < \sum\limits_{i \in \sigma_1}{\bar y}_{i}$, it follows that
\begin{align}
   \sum\limits_{e \in \delta^-(V'')}{{\bar x}_{e}} < \sum\limits_{i \in \sigma_1}{\bar y}_{i} + \overbrace{\sum\limits_{i \in \delta^-(k)\backslash V'}{{\bar x}_{ik}}}^{(g)} - \overbrace{\sum\limits_{j \in \delta^+(k)\cap V'}{{\bar x}_{kj}}}^{(h)}. \label{proof_dominance01}
\end{align}
\noindent From (\ref{b102}), (\ref{b105}), (\ref{csc111}) and (\ref{csc112}), we have that 
\begin{align}
    \overbrace{\sum\limits_{i \in \delta^-(k)\backslash V'}{{\bar x}_{ik}}}^{(g)} - \overbrace{\sum\limits_{j \in \delta^+(k)\cap V'}{{\bar x}_{kj}}}^{(h)} \leq {\bar y}_k.
\end{align}
\noindent Then, from (\ref{proof_dominance01}), it follows that 
\begin{align}
   \sum\limits_{e \in \delta^-(V'')}{{\bar x}_{e}} & < \sum\limits_{i \in \sigma_1}{\bar y}_{i} + {\bar y}_k \\
   & < \sum\limits_{i \in \sigma_2}{\bar y}_{i},
\end{align}

Since $\sigma_2 = \sigma_1 \cup \{k\}$ and $\sigma_1 \subseteq V'$, we have that $\sigma_2 \subseteq V''$. Then, also in this case, there exists a CCC (\ref{ccc00}) referring to $\sigma_2$ (in particular, with $V = V''$) that is also violated by $(\bar{x},\bar{y},\bar{f},\bar{\varphi})$.

\item $\exists\, V' \subseteq N\backslash\{t\}$, with $\sigma_1 \subseteq V'$, such that $\sum\limits_{e \in \delta^+(V')}{{\bar x}_{e}} < \sum\limits_{i \in \sigma_1}{\bar y}_{i}$.

Through the same idea of the previous case, we can also show that the CCC (\ref{ccc01}) referring to $\sigma_2$ that considers $V = V' \cup \{k\}$ is also violated by $(\bar{x},\bar{y},\bar{f},\bar{\varphi})$.
\end{enumerate} \qedhere \end{proof}

From Theorem~\ref{teo_ccc_dominance}, we may discard several CCCs (\ref{ccc00}) and (\ref{ccc01}), as we only need to consider the ones related to maximal conflict cliques. We highlight that, in the conflict graph, there might be maximal cliques of size one, which correspond to isolated vertices.
We also remark that CCCs are a natural extension of two classes of valid inequalities previously addressed in the literature of TOP and STOP, namely general connectivity constraints~\citep{Bianchessi2017} and conflict cuts~\citep{Assuncao2019}. In particular, these two classes consist of CCCs based on cliques of sizes one and two, respectively. Then, the result below is another direct implication of Theorem~\ref{teo_ccc_dominance}.

\begin{corollary}
\label{corol_dominance}
CCCs dominate the general connectivity constraints of \cite{Bianchessi2017} and the conflict cuts of \cite{Assuncao2019}.
\end{corollary}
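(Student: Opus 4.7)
The plan is to reduce the corollary directly to Theorem~\ref{teo_ccc_dominance} by identifying the two previously known families as particular sub-cases of CCCs. First I would carefully state what the ``general connectivity constraints'' of \cite{Bianchessi2017} and the ``conflict cuts'' of \cite{Assuncao2019} look like in the notation of this paper, and then observe that each instance of them arises as a CCC in which the underlying conflict clique $\sigma \in \Sigma$ has a very small size.

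Concretely, for the general connectivity constraints, I would argue that inequalities of the form $\sum_{e \in \delta^-(V)} x_e \geq y_i$ for $V \subseteq N \backslash \{s\}$ with $i \in V$ (and the symmetric version with $\delta^+(V)$) are exactly what one obtains by taking $\sigma = \{i\}$ in (\ref{ccc00})--(\ref{ccc02}); since every vertex of $N \backslash \{s,t\}$ is a clique of $G_c$ by itself, such singletons always belong to $\Sigma$. For the conflict cuts of \cite{Assuncao2019}, I would likewise observe that they are stated for a pair $\{i,j\}$ of vertices whose union would force a route to exceed $T$, i.e., for a pair $\{i,j\} \in \mathcal{K}$. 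That pair is, by the very definition of the conflict graph $G_c$, a clique of size two in $\Sigma$, and the inequality coincides with the CCC (\ref{ccc00})/(\ref{ccc02}) obtained by plugging in $\sigma=\{i,j\}$.

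With these two identifications in hand, the dominance claim becomes an immediate consequence of Theorem~\ref{teo_ccc_dominance}. Given any singleton $\sigma_1 = \{i\}$ that is not a maximal clique of $G_c$, Theorem~\ref{teo_ccc_dominance} exhibits a clique $\sigma_2 \supset \sigma_1$ whose CCCs dominate those of $\sigma_1$ on $\mathcal{L}$; when $\{i\}$ is maximal (i.e., $i$ is isolated in $G_c$) the corresponding general connectivity constraint already equals a CCC and is therefore trivially implied by the family. The same argument applied to $\sigma_1=\{i,j\}$ takes care of the conflict cuts: either $\{i,j\}$ is a maximal clique and the conflict cut already appears verbatim in the CCC family, or Theorem~\ref{teo_ccc_dominance} provides a strictly larger clique $\sigma_2$ whose CCCs dominate it.

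The main obstacle I anticipate is purely a bookkeeping one, namely making the identification between the notation used by \cite{Bianchessi2017}, \cite{Assuncao2019} and the one used here tight enough that the inequalities literally coincide (and not merely ``look the same''); once that is pinned down, Theorem~\ref{teo_ccc_dominance} does all of the combinatorial work and nothing further is required. No separation argument or new polyhedral computation is needed, because dominance in the sense used here is exactly the property that Theorem~\ref{teo_ccc_dominance} already establishes for arbitrary non-maximal $\sigma_1 \in \Sigma$.
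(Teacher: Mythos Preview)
Your proposal is correct and mirrors the paper's own justification: the paper explicitly notes that the general connectivity constraints and the conflict cuts are precisely the CCCs associated with cliques of size one and two, respectively, and then states Corollary~\ref{corol_dominance} as a direct implication of Theorem~\ref{teo_ccc_dominance}. Your added case distinction (maximal versus non-maximal $\sigma_1$) just makes explicit what the paper leaves implicit, so there is nothing to add.
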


\subsubsection{Separation of CCCs}
\label{s_sep_cccs}
Let $(\bar{x},\bar{y},\bar{f},\bar{\varphi})$ be a given fractional solution referring to $\mathcal{L}$, the linear relaxation of $\mathcal{F}$. Also consider the residual graph $\tilde{G} = (N,\tilde{A})$ induced by $(\bar{x},\bar{y},\bar{f},\bar{\varphi})$, such that each arc $(i,j) \in A$ belongs to $\tilde{A}$ if, and only if, $\bar{x}_{ij} > 0$. Moreover, a capacity $c[i,j] = \bar{x}_{ij}$ is associated with each arc $(i,j) \in \tilde{A}$.
%The original traverse times associated with the arcs $A$ remain the same in $\tilde{G}$.

As detailed in the sequel, the separation of CCCs involves solving maximum flow problems. In this sense, consider the notation defined as follows.
Given an arbitrary digraph ${G}_a$ with capacitated arcs, and two vertices $i$ and $j$ of ${G}_a$, let ${\textit{max-flow}}_{i\to j}{({G}_a)}$ denote the problem of finding the maximum flow (and, thus, a minimum cut) from $i$ to $j$ on ${G}_a$.
Moreover, let $\{ F_{{i\to j}},\theta_{i\to j} \}$ denote an optimal solution of such problem, where $F_{{i\to j}}$ is the value of the maximum flow, and $\theta_{i\to j}$ defines a corresponding minimum cut, with $i \in \theta_{i\to j}$. 

Considering the shortest (minimum time) paths matrix $R$ defined in the beginning of Section~\ref{s_models}, we first compute the set $\mathcal{K}$ of conflicting vertices by checking, for all pairs $\{ i,j\}$, $i,j \in N \backslash \{s,t\}$, $i \neq j$, if there exists a path from $s$ to $t$ on ${G}$ that traverses both $i$ and $j$ (in any order) and that satisfies the total time limit $T$. If no such path exists, then $\{ i,j\}$ belongs to $\mathcal{K}$. For simplicity, in this work, we only consider a subset $\tilde{\mathcal{K}} \subseteq \mathcal{K}$ of conflicting vertex pairs, such that
\begin{equation*}
\{ i,j\}\in \tilde{\mathcal{K}} \text{ iff }
\begin{cases}
        \quad (i)\; R_{si} + R_{ij} + R_{jt} > T,\; \text{and} \\
        \quad (ii)\; R_{sj} + R_{ji} + R_{it} > T, 
\end{cases}
\qquad{ \forall\, i,j \in N \backslash \{s,t\}, i \neq j.}
\end{equation*}

\noindent where $(i)$ is satisfied if a minimum traverse time route from $s$ to $t$ that visits $i$ before $j$ exceeds the time limit. Likewise, $(ii)$ considers a minimum time route that visits $j$ before $i$.
Since the routes from $s$ to $t$ considered in $(i)$ and $(ii)$ are composed by simply aggregating entries of $R$, they may not be elementary, i.e., they might visit a same vertex more than once. Then,
$\tilde{\mathcal{K}}$ is not necessarily equal to ${\mathcal{K}}$. Also observe that we only have to compute $\tilde{\mathcal{K}}$ a single time for a given STOP instance, as it is completely based on the original graph $G$.

After that, we build the corresponding 
conflict graph $\tilde{G}_c = (N\backslash\{s,t\},\tilde{\mathcal{K}})$. %representing the pairs of conflicting vertices in $\tilde{\mathcal{K}}$, such that $\tilde{E}_c = \{\{i,j\}:\, \{ i,j \} \in \tilde{\mathcal{K}}\}$.
Thereafter, we compute a subset $\tilde{\Sigma} \subseteq \Sigma$ of conflict cliques by finding all the maximal cliques of $\tilde{G}_c $, including the ones of size one. To this end, we apply the depth-first search algorithm of \cite{Tomita2006}, which runs with worst-case time complexity of $\BigO{3^{\frac{|N|}{3}}}.$

\begin{figure}[!ht]
\begin{center}
\scalebox{1}
{
\framebox
{
\begin{minipage}[t]{25cm}
{\small
\begin{tabbing}
xxx\=xxx\=xxx\=xxx\=xxx\=xxx\=xxx\=xxx\=xxx\=xxx\= \kill
\textbf{Input: }A fractional solution $(\bar{x},\bar{y},\bar{f},\bar{\varphi})$, its corresponding residual graph $\tilde{G} = (N,\tilde{A})$ and \\ the subset $\tilde{\Sigma} \subseteq \Sigma$ of maximal conflict cliques. \\
\textbf{Output: }{A set $\mathcal{X}'$ of CCCs violated by $(\bar{x},\bar{y},\bar{f},\bar{\varphi})$.} \\
\textbf{\scriptsize1.} $\mathcal{X}' \leftarrow \emptyset$;\\
\textbf{\scriptsize2.} Set all cliques in $\tilde{\Sigma}$ as \emph{active};\\
\textbf{\scriptsize3.} \textbf{for all }{($\sigma \in \tilde{\Sigma}$)} \textbf{ do}\\
\textbf{\scriptsize4.} \> \> \textbf{if }{($\sigma$ is \emph{active})} \textbf{then}\\
\textbf{\scriptsize5.} \> \> \> \> \textit{Step \rom{1}. Building the auxiliary graphs}\\
\textbf{\scriptsize6.} \> \> \> \>  Build $\tilde{G}'_1 = (\tilde{N}'_1,\tilde{A}'_1)$, $\tilde{N}'_1 = N \cup \{\beta_1\}$, $\tilde{A}'_1 = \tilde{A} \cup \{(i,\beta_1):\, i \in \sigma\}$;\\
\textbf{\scriptsize7.} \> \> \> \>  Build $\tilde{G}'_2 = (\tilde{N}'_2,\tilde{A}'_2)$, $\tilde{N}'_2 = N \cup \{\beta_2\}$, $\tilde{A}'_2 = \{(v,u):\, (u,v) \in \tilde{A}\} \cup \{(i,\beta_2):\, i \in \sigma\}$;\\
\textbf{\scriptsize8.} \> \> \> \> \textbf{for all }{($(u,v) \in \tilde{A}$)} \textbf{ do}\\
\textbf{\scriptsize9.} \> \> \> \> \> \> $c_1'[u,v] \leftarrow c_2'[v,u] \leftarrow c[u,v]$;\\
\textbf{\scriptsize10.} \> \> \> \> \textbf{end-for}; \\
\textbf{\scriptsize11.} \> \> \> \> \textbf{for all }{($i \in \sigma$)} \textbf{ do}\\
\textbf{\scriptsize12.} \> \> \> \> \> \> $c_1'[i,\beta_1] \leftarrow c_2'[i,\beta_2] \leftarrow |M|$;\\
\textbf{\scriptsize13.} \> \> \> \> \textbf{end-for}; \\
\textbf{\scriptsize14.} \> \> \> \> \textit{Step \rom{2}. Looking for a violated CCC (\ref{ccc00})} \\
\textbf{\scriptsize15.} \> \> \> \> $\{ F_{s\to \beta_1},\theta_{s\to \beta_1} \} \leftarrow {\textit{max-flow}}_{s\to \beta_1}{(\tilde{G}_1')}$; \\
\textbf{\scriptsize16.} \> \> \> \> \textbf{if }{($F_{s\to \beta_1} < \sum\limits_{i \in \sigma}{{\bar y}_i}$)} \textbf{ then} \\
\textbf{\scriptsize17.} \> \> \> \> \> \>  $\mathcal{X}' \leftarrow \mathcal{X}' \cup \{ \{ N \backslash \theta_{s\to \beta_1},\sigma \}\}$; \\
\textbf{\scriptsize18.} \> \> \> \> \> \>  Call \emph{update-active-cliques}($\tilde{\Sigma}$, $\sigma$, $\bar{y}$);\\
\textbf{\scriptsize19.} \> \> \> \> \textbf{else} \\
\textbf{\scriptsize20.} \> \> \> \> \> \> \textit{Step \rom{3}. Looking for a violated CCC (\ref{ccc01})} \\
\textbf{\scriptsize21.} \> \> \> \> \> \> $\{ F_{t\to \beta_2},\theta_{t\to \beta_2} \} \leftarrow {\textit{max-flow}}_{t\to \beta_2}{(\tilde{G}_2')}$; \\
\textbf{\scriptsize22.} \> \> \> \> \> \> \textbf{if }{($F_{t\to \beta_2} < \sum\limits_{i \in \sigma}{\bar y}_i$)} \textbf{ then} \\
\textbf{\scriptsize23.} \> \> \> \> \> \> \> \> $\mathcal{X}' \leftarrow \mathcal{X}' \cup \{ \{ N\backslash \theta_{t\to \beta_2},\sigma \}\}$; \\
\textbf{\scriptsize24.} \> \> \> \> \> \> \> \> Call \emph{update-active-cliques}($\tilde{\Sigma}$, $\sigma$, $\bar{y}$);\\
\textbf{\scriptsize25.} \> \> \> \> \> \> \textbf{end-if}; \\
\textbf{\scriptsize26.} \> \> \> \> \textbf{end-if-else} \\
\textbf{\scriptsize27.} \> \> \textbf{end-if};\\
\textbf{\scriptsize28.} \textbf{end-for}; \\
\textbf{\scriptsize29.} \textbf{return} $\mathcal{X}'$;
\end{tabbing}
 }
\end{minipage}
}
}
\end{center}
\caption{Algorithm used to separate violated CCCs.}
\label{ccc_separation}
\end{figure}

Once $\tilde{\Sigma}$ is computed, we look for violated CCCs of types (\ref{ccc00}) and (\ref{ccc01}) as detailed in Figure~\ref{ccc_separation}.
Let the set $\mathcal{X}'$ keep the CCCs found during the separation procedure. Initially, $\mathcal{X}'$ is empty (line 1, Figure~\ref{ccc_separation}). 
Due to the possibly large number of maximal cliques, we adopt a simple filtering mechanism to discard cliques from further search whenever convenient. Precisely, the cliques, which are initially marked as \emph{active} (line 2, Figure~\ref{ccc_separation}), are disabled if any of its vertices belongs to a previously separated CCC.
Then, for every conflict clique $\sigma \in \tilde{\Sigma}$, we check if it is currently \emph{active} (lines 3 and 4, Figure~\ref{ccc_separation}) and, if so,  we build two auxiliary graphs, one for each type of CCC. The first graph, denoted by
$\tilde{G}'_1$, is built by adding to the residual graph $\tilde{G}$ an artificial vertex $\beta_1$ and $|\sigma|$ arcs $(i,\beta_1)$, one for each $i \in \sigma$ (see line 6, Figure~\ref{ccc_separation}). The second one, denoted by $\tilde{G}'_2$, is built by reversing all the arcs of $\tilde{G}$ and, then, adding an artificial vertex $\beta_2$, as well as an arc $(i,\beta_2)$ for each $i \in \sigma$ (see line 7, Figure~\ref{ccc_separation}).

The capacities of the arcs of
$\tilde{G}'_1$ and $\tilde{G}'_2$ are kept in the data structures $c_1'$ and $c_2'$, respectively. In both graphs,
the capacities of the original arcs in $\tilde{G}$ are preserved (see lines 8-10, Figure~\ref{ccc_separation}).
Moreover, all the additional arcs have a same capacity value, which is equal to a sufficiently large number. Here, we adopted the value of $|M|$, the number of vehicles (see lines 11-13, Figure~\ref{ccc_separation}).

Figure~\ref{fig:ex_cccs} illustrates the construction of the auxiliary graphs described above. In this example, we consider the STOP instance of Figure~\ref{fig:ex} and assume that the current fractional solution $(\bar{x},\bar{y},\bar{f},\bar{\varphi})$ has ${\bar x}_{si} = {\bar x}_{ik} = {\bar x}_{kj} = {\bar x}_{sl} = {\bar x}_{lj} = 0.5$, ${\bar x}_{ki} = {\bar x}_{il} = {\bar x}_{lt} = 0$ and ${\bar x}_{jt} = 1$. Moreover, we consider the maximal conflict clique $\{i,k,j\}$ (see, once again, the conflict graph of Figure~\ref{fig:ex:conflictgraph}).

\begin{figure}[!ht]
\begin{subfigure}{\textwidth}
\centering
    \begin{tikzpicture}[
            > = stealth, % arrow head style
            shorten > = 1pt, % don't touch arrow head to node
            auto,
            node distance = 2.5cm, % distance between nodes
            semithick % line style
        ]

        \tikzstyle{every state}=[
            draw = black,
            thick,
            fill = white,
            minimum size = 4mm
        ]

        \node[state] (s) [fill = gray!20]{$s$};
        \node[state] (i) [above right of=s,fill = gray!20,dashed] {$i$};
        \node[state] (l) [below right of=s] {$l$};
        \node[state] (k) [right of=i,node distance = 2cm,fill = gray!20,dashed] {$k$};
        \node[state] (j) [right of=k,node distance = 2cm,fill = gray!20,dashed] {$j$};
        \node[state] (t) [below right of=j,fill = gray!20] {$t$};
        \node[state] (alpha) [dashed,above of=k,,node distance = 1.5cm]{$\beta_1$};
        
        \node[state] (G) [above of = s, draw = none, node distance = 2.5cm]{$\tilde{G}'_1$};
    
        \path[->] (s) edge node {0.5} (i);
        \path[->] (s) edge node [below left] {0.5} (l);
        \path[->] (l) edge node {0.5} (j);
        \path[->] (i) edge [bend left] node {0.5} (k);
        %\path[->] (k) edge [bend left] node {0} (i);
        %\path[->] (i) edge node {0} (l);
        \path[->] (k) edge node {0.5} (j);
        \path[->] (j) edge node {1} (t);
        
        \path[->] (i) edge[dashed,bend left] node {$\infty$} (alpha);
        \path[->] (j) edge[dashed,bend right] node[above right] {$\infty$} (alpha);
        \path[->] (k) edge[dashed,bend right] node[right] {$\infty$} (alpha);
 %       \path[->] (l) edge node {0\,} (t);

    \end{tikzpicture}
    \caption{Example of an auxiliary graph $\tilde{G}'_1$ used in the separation of CCCs (\ref{ccc00}).}
  \label{fig:ex_cccs_a}
\end{subfigure}
\begin{subfigure}{\textwidth}
\centering
    \begin{tikzpicture}[
            > = stealth, % arrow head style
            shorten > = 1pt, % don't touch arrow head to node
            auto,
            node distance = 2.5cm, % distance between nodes
            semithick % line style
        ]

        \tikzstyle{every state}=[
            draw = black,
            thick,
            fill = white,
            minimum size = 4mm
        ]

        \node[state] (s) [fill = gray!20]{$s$};
        \node[state] (i) [above right of=s,fill = gray!20,dashed] {$i$};
        \node[state] (l) [below right of=s] {$l$};
        \node[state] (k) [right of=i,node distance = 2cm,fill = gray!20,dashed] {$k$};
        \node[state] (j) [right of=k,node distance = 2cm,fill = gray!20,dashed] {$j$};
        \node[state] (t) [below right of=j,fill = gray!20] {$t$};
        \node[state] (beta) [dashed,above of=k,,node distance = 1.5cm]{$\beta_2$};
        
        \node[state] (G) [above of = s, draw = none, node distance = 2.5cm]{$\tilde{G}'_2$};
    
        \path[->] (i) edge node [above left] {0.5} (s);
        \path[->] (l) edge node {0.5} (s);
        %\path[->] (l) edge node [right] {0} (i);
        \path[->] (j) edge node [above left] {0.5} (l);
        \path[->] (k) edge [bend right] node [above] {0.5} (i);
        %\path[->] (i) edge [bend right] node [below] {0} (k);
        \path[->] (j) edge node[above] {0.5} (k);
        \path[->] (t) edge node [above right] {1} (j);
        
        \path[->] (i) edge[dashed,bend left] node {$\infty$} (beta);
        \path[->] (j) edge[dashed,bend right] node[above right] {$\infty$} (beta);
        \path[->] (k) edge[dashed,bend right] node[right] {$\infty$} (beta);
 %       \path[->] (l) edge node {0\,} (t);

    \end{tikzpicture}
  \caption{Example of an auxiliary graph $\tilde{G}'_2$ used in the separation of CCCs (\ref{ccc01}).}
  \label{fig:ex_cccs_b}
  \end{subfigure}
  
  \caption{Auxiliary graphs built when considering the STOP instance of Figure~\ref{fig:ex}, the maximal conflict clique $\{i,k,j\}$ and a fractional solution $\bar{x}$, with ${\bar x}_{si} = {\bar x}_{ik} = {\bar x}_{kj} = {\bar x}_{sl} = {\bar x}_{lj} = 0.5$, ${\bar x}_{ki} = {\bar x}_{il} = {\bar x}_{lt} = 0$ and ${\bar x}_{jt} = 1$. Here, the values associated with the arcs are their corresponding capacities, and the infinity symbol stands for a sufficiently large value.}
  \label{fig:ex_cccs}
\end{figure}
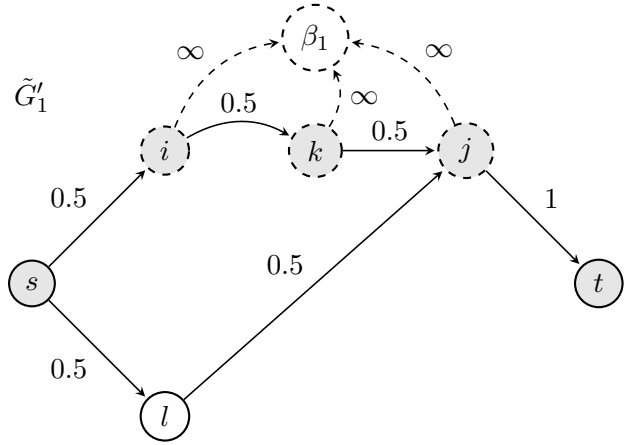
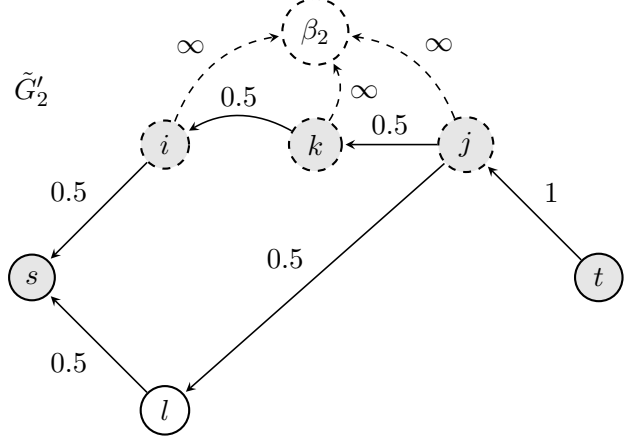

Once the auxiliary graphs are built for a given $\sigma \in \tilde{\Sigma}$, the algorithm first looks for a violated CCC (\ref{ccc00}) by computing the maximum flow from $s$ to $\beta_1$ on $\tilde{G}'_1$. Accordingly, let $\{ F_{s\to \beta_1},\theta_{s\to \beta_1} \}$ be the solution of ${\textit{max-flow}}_{s\to \beta_1}{(\tilde{G}'_1)}$.
Recall that $F_{s\to \beta_1}$ gives the value of the resulting maximum flow, and $\theta_{s\to \beta_1}$ defines a corresponding minimum cut, with $s \in \theta_{s \to \beta_1}$. Then, the algorithm checks if $F_{s\to \beta_1}$ is smaller than $\sum\limits_{i \in \sigma}{{\bar y}_i}$. If that is the case, a violated CCC
(\ref{ccc00}) is identified and added to $\mathcal{X}'$. Precisely, this inequality is denoted by
$\{ N\backslash \theta_{s\to \beta_1},\sigma \}$ and defined as
\begin{equation}
\sum\limits_{e \in \delta^-(N \backslash \theta_{s \to \beta_1})}{x_{e}} \geq \sum\limits_{i \in \sigma}y_{i},
\end{equation}

\noindent where $N \backslash \theta_{s \to \beta_1}$ corresponds to the subset $V \subseteq N\backslash \{s\}$ of (\ref{ccc00}). If a violated CCC (\ref{ccc00}) is identified, we also disable the cliques that are no longer \emph{active} by calling the procedure \emph{update-active-cliques}, which is described in Figure~\ref{ccc_update-active-cliques}. The separation of CCCs (\ref{ccc00}) is summarized at lines 14-19, Figure~\ref{ccc_separation}.
\begin{figure}[!ht]
\begin{center}
\scalebox{1}
{
\framebox
{
\begin{minipage}[t]{25cm}
{\small
\begin{tabbing}
xxx\=xxx\=xxx\=xxx\=xxx\=xxx\=xxx\=xxx\=xxx\=xxx\= \kill
\textbf{Input: }A set of conflict cliques $\tilde{\Sigma}$, a conflict clique $\sigma \in \tilde{\Sigma}$ and a fractional solution $\bar{y}$. \\
\textbf{\scriptsize1.} \textbf{for all} (vertex {$i \in \sigma$}) \textbf{do}\\
\textbf{\scriptsize2.} \> \> \textbf{if }{($\bar{y}_i > 0$)} \textbf{then}\\
\textbf{\scriptsize3.} \> \> \> \> Deactivate every clique in $\tilde{\Sigma}$ containing $i$;\\
\textbf{\scriptsize4.} \> \> \textbf{end-if};\\
\textbf{\scriptsize5.} \textbf{end-for};
\end{tabbing}
 }
\end{minipage}
}
}
\end{center}
\caption{Procedure \emph{update-active-cliques}, which manages the currently \emph{active} conflict cliques during the separation of CCCs.}
\label{ccc_update-active-cliques}
\end{figure}

Notice that, if the separation algorithm does not find a violated CCC (\ref{ccc00}) for the current clique, then this clique remains \emph{active}. In this case, the algorithm looks for a violated CCC of type (\ref{ccc01}) by computing $\{ F_{t\to \beta_2},\theta_{t\to \beta_2} \} = {\textit{max-flow}}_{t\to \beta_2}{(\tilde{G}'_2)}$. If $F_{t\to \beta_2}$ is smaller than $\sum\limits_{i \in \sigma}{{\bar y}_i}$, then a violated CCC (\ref{ccc01}) is identified and added to $\mathcal{X}'$. This CCC is denoted by $\{ N\backslash \theta_{t\to \beta_2},\sigma\}$ and defined as
\begin{equation}
\sum\limits_{e \in \delta^+(N \backslash \theta_{t \to \beta_2})}{x_{e}} \geq \sum\limits_{i \in \sigma}y_{i}.
\end{equation}

\noindent Here, $N \backslash \theta_{t \to \beta_2}$ corresponds to the subset $V \subseteq N\backslash \{t\}$ of (\ref{ccc01}). Also in this case, we disable the cliques that are no longer \emph{active} by calling the procedure \emph{update-active-cliques}.
The separation of CCCs (\ref{ccc01}) is summarized at lines 20-25, Figure~\ref{ccc_separation}.

\subsubsection{Arc-Vertex Inference Cuts (AVICs)}
Consider the set $E = \{\{i,j\} :\, (i,j) \in {A}\mbox{ and } (j,i) \in {A}\}$. AVICs are defined as 
\begin{align}
\lvert y_i - y_j \rvert \leq 1 - (x_{ij} + x_{ji}) & \qquad \forall \, \{i,j\} \in E, \label{avics00}
\end{align}
\noindent which can be linearized as
\begin{align}
y_i - y_j \leq 1 - (x_{ij}+x_{ji}) \qquad \forall \, \{i,j\} \in E,& \mbox{ and} \label{avics01} \\
y_j - y_i \leq 1 - (x_{ij}+x_{ji}) \qquad \forall \, \{i,j\} \in E.& \label{avics02}
\end{align}

\noindent In logic terms, these inequalities correspond to the boolean expressions
\begin{align*}
(x_{ij} = 1 \oplus x_{ji} = 1) \to y_i - y_j = 0 & \qquad \forall \, \{i,j\} \in E,
\end{align*}
\noindent where $\oplus$ stands for the \emph{exclusive disjunction} operator.

The validity of AVICs relies on two trivial properties that are inherent to feasible solutions for $\mathcal{F}$: $(i)$ for all $\{i,j\} \in E$, arcs $(i,j)$ and $(j,i)$ cannot be simultaneously selected (i.e., $x_{ij} + x_{ji} \leq 1$), and $(ii)$ once an arc $(i,j) \in A$ is selected in a solution (i.e., $x_{ij} = 1$), we must have $y_i = y_j$ (which, more precisely, are also equal to one).

One may notice that simpler valid inequalities can be devised by considering arcs separately, as follows
\begin{align}
\lvert y_i - y_j \rvert \leq 1 - x_{ij} \qquad \forall \, (i,j) \in A. \label{avics03}
\end{align}
\noindent However, these inequalities are not only weaker, but redundant for formulation $\mathcal{F}$, as proven next.

\begin{proposition}
Inequalities (\ref{avics03}) do not cut off any solution from the polyhedron
$\mathcal{L}$.
\end{proposition}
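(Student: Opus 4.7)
The plan is to show that inequalities (\ref{avics03}) follow from the existing constraints of $\mathcal{F}$ together with the box constraints $0 \leq x, y \leq 1$ of $\mathcal{L}$. The key intermediate claim I would establish is that, for every arc $(i,j) \in A$, both $x_{ij} \leq y_i$ and $x_{ij} \leq y_j$ hold for every point of $\mathcal{L}$. Once these two implications are in hand, the linearized forms $y_i - y_j \leq 1 - x_{ij}$ and $y_j - y_i \leq 1 - x_{ij}$ follow immediately: from $x_{ij} \leq y_j$ and $y_i \leq 1$ (a box constraint of $\mathcal{L}$) we get $y_i + x_{ij} \leq y_i + y_j \leq 1 + y_j$, and the symmetric chain handles the other inequality.

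To establish $x_{ij} \leq y_i$ for an arbitrary arc $(i,j) \in A$, I would split on the role of the tail $i$. If $i \in S \cup P$, then constraint (\ref{b102}) gives $x_{ij} \leq \sum_{k \in \delta^+(i)} x_{ik} = y_i$ because the remaining terms $x_{ik}$ are nonnegative in $\mathcal{L}$. If $i = s$, then (\ref{b101}) forces $y_s = 1$, so the inequality reduces to $x_{sj} \leq 1$, which is a box constraint. Finally, if $i = t$, constraint (\ref{b104}) sets $x_{tj} = 0$, making the inequality trivial.

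An entirely analogous case analysis on the head $j$ yields $x_{ij} \leq y_j$: for $j \in S \cup P$, I combine (\ref{b102}) with the flow-balance equalities (\ref{b105}) to obtain $\sum_{k \in \delta^-(j)} x_{kj} = y_j$, from which $x_{ij} \leq y_j$; the cases $j=t$ and $j=s$ are again handled by (\ref{b101}) and (\ref{b104}), respectively. Assembling these bounds with $y_i, y_j \leq 1$ produces both linearizations of (\ref{avics03}), so no feasible point of $\mathcal{L}$ can violate them.

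I do not expect a genuine obstacle here, since the argument is a short chain of substitutions; the only care needed is to treat the special vertices $s$ and $t$ separately, because constraints (\ref{b102}) and (\ref{b105}) are only indexed over $S \cup P$. Once those corner cases are dispatched, the derivation is a one-line combination of two linear inequalities for each of the two linearized forms.
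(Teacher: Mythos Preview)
Your proposal is correct and follows essentially the same approach as the paper: both arguments derive $x_{ij}\le y_i$ and $x_{ij}\le y_j$ from constraints (\ref{b102}), (\ref{b105}) and the variable bounds, and then combine these with $y_i,y_j\le 1$ to obtain $|y_i-y_j|\le 1-x_{ij}$. If anything, your version is slightly more careful than the paper's, since you explicitly dispatch the corner cases $i\in\{s,t\}$ and $j\in\{s,t\}$ via (\ref{b101}) and (\ref{b104}), whereas the paper states the bound ``for all $(i,j)\in A$'' without isolating those endpoints.
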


\begin{proof}
%Consider an arbitrary arc $(i,j) \in A$ and a feasible solution $(\tilde{x},\tilde{y},\tilde{z})$ for the LP defined by $\mathcal{L}_1$. 
From (\ref{b102}), (\ref{b105}) and the domain of the binary variables $x$ and $y$, we have that $0 \leq x_{ij} \leq y_i \leq 1$ and $0 \leq x_{ij} \leq y_j \leq 1$ for all $(i,j) \in A$. Accordingly, the maximum possible value assumed by $\lvert y_i - y_j \rvert$ occurs when one of the corresponding $y$ variables assumes its minimum value (i.e., $x_{ij}$), and the other assumes its maximum (i.e., $1$).
\end{proof}

\subsubsection{Separation of AVICs}
\label{s_sep_avics}
Notice that the total number of AVICs (\ref{avics01}) and (\ref{avics02}) is equal to $2\times|E|$, which is at most $|A|$. Then, this family of inequalities can be separated by complete enumeration with \BigO{|A|} time complexity.

\subsubsection{Lifted Cover Inequalities (LCIs)}
First, consider the knapsack inequality
\begin{equation}
\sum \limits_{i \in P}{p_{i}y_{i}} \leq \lfloor\tau\rfloor \label{lci1},
\end{equation}

\noindent where $\tau$ is a dual (upper) bound on the optimal solution value of $\mathcal{F}$, and
$P \subseteq N \backslash \{s,t\}$ is the set of profitable vertices, with $p_i \in \mathbb{Z}^+$ for all $i \in P$, as defined in Section~\ref{s_notation}. By definition, (\ref{lci1}) is valid for $\mathcal{F}$, once its left-hand side corresponds to the objective function of this formulation. 

Based on (\ref{lci1}), we devise LCIs, which are classical cover inequalities strengthened through lifting (see, e.g., \cite{Balas1975,Wolsey1975,Gu98,Kaparis08}). Precisely, we depart from cover inequalities of the form
\begin{equation}
\sum \limits_{i \in C}{y_{i}} \leq |C| -1 \label{lci2},
\end{equation}

\noindent where the set $C \subseteq P$ \emph{covers} (\ref{lci1}), i.e., $\sum \limits_{i \in C}{p_{i}} > \lfloor\tau\rfloor$. Moreover, $C$ must be minimal, i.e., $\sum \limits_{i \in C\backslash\{j\}}{p_{i}} \leq \lfloor\tau\rfloor$ for all $j \in C$. Then, let the disjoint sets $C_1$ and $C_2$ define a partition of $C$, with $C_1 \neq \emptyset$. LCIs are defined as
\begin{equation}
\sum \limits_{i \in C_1}{y_{i}} + \sum\limits_{j \in C_2}{\pi_jy_j} + \sum\limits_{j \in P\backslash C}{\mu_jy_j} \leq |C_1| + \sum\limits_{j \in C_2}{\pi_j} -1, \label{lci3}
\end{equation}

\noindent where $\pi_i \in \mathbb{Z}$, $\pi_i \geq 1$ for all $i\in C_2$, and $\mu_i \in \mathbb{Z}$, $\mu_i \geq 0$ for all $ i\in P \backslash C$, are the lifted coefficients.
One may note that setting $\pi = \textbf{1}$ and $\mu = \textbf{0}$ reduces ($\ref{lci3}$) to the classical cover inequality (\ref{lci2}).
%Both ($\ref{lci2}$) and ($\ref{lci3}$) are valid for the polyhedron defined by ($\ref{lci1}$) and the domain of the variables $y$ \citep{Balas1975,Wolsey1975,Gu98} and can be used as valid inequalities for the original formulation $\mathcal{F}$.Notice, however, that (\ref{lci1}) --- and, thus, (\ref{lci2}) and (\ref{lci3}) --- are not valid for the the original problem $\mathcal{F}$ --- as they might cut off feasible solutions. Anyway, they can still be used as logic cuts~\citep{Hooker94a}, as they only discard non-optimal solutions for $\mathcal{F}$.

\subsubsection{Separation of LCIs}
\label{s_sep_lcis}

The separation algorithm we adopt follows the classical algorithmic framework of \cite{Gu98}.
In general terms, the liftings are done sequentially (i.e., one variable at a time) according to the values of the $y$ variables in the current fractional solution $(\bar{x},\bar{y},\bar{f},\bar{\varphi})$, and the lifted coefficients are computed by solving auxiliary knapsack problems to optimality. Once all the variables are tested for lifting, the resulting LCI is checked for violation. We refer to \cite{Assuncao2019} for a detailed explanation of the separation algorithm and a didactic overview on the concepts of lifting.

\section{Reinforcing the original formulation through cutting planes}
\label{s_cutting-plane}
In this section, we briefly describe the cutting-plane algorithm used to reinforce formulation $\mathcal{F}$ through the addition of the valid inequalities discussed in Section~\ref{s_cuts}. The algorithm follows the same framework adopted by \cite{Assuncao2019}, except for the types of cuts separated. For simplicity, we assume that $\mathcal{F}$ is feasible.

The algorithm starts by adding to $\mathcal{L}$ --- the linear relaxation of $\mathcal{F}$ --- all AVICs, which are separated by complete enumeration (as discussed in Section~\ref{s_sep_avics}). 
Then, an iterative procedure takes place. Precisely, at each iteration, the algorithm looks for CCCs and LCIs violated by the solution of the current LP model. These cuts are found by means of the separation procedures described in Sections~\ref{s_sep_cccs} and~\ref{s_sep_lcis}. Instead of selecting all the violated CCCs found, we only add to the model the most violated cut (if any) and the ones that are sufficiently orthogonal to it. Such strategy is able to balance the strength and diversity of the cuts separated, while limiting the model size (see, e.g., \cite{Wesselmann2012,Samer2015,Bicalho2016}). Naturally, this filtering procedure does not apply to LCIs, since at most a single LCI is separated per iteration.
Details on how these cuts are selected are given in Section~\ref{s_implementation_details}.

The algorithm iterates until either no more violated cuts are found or the bound improvement of the current model --- with respect to the model from the previous iteration --- is inferior or equal to a tolerance $\epsilon_1$. The order in which CCCs and LCIs are separated is not relevant, since the bound provided by the current LP model is only updated at the end of each loop, when all separation procedures are done.

\section{Feasibility Pump (FP)}
\label{s_FP}
In this section, we describe the FP algorithm we adopt to find initial solutions for STOP by means of formulation $\mathcal{F}$ and its reinforced version obtained from the cutting-plane algorithm previously presented.
The FP matheuristic was proposed by \cite{Fischetti2005} as an alternative to solve the NP-hard problem of finding feasible solutions for generic MILP problems of the form $\min\{c^Tx:\, Ax \geq b,\, x_i \mbox{ integer } \forall i \in \mathcal{I}\}$. Here, $x$ and $c$ are column vectors of, respectively, variables and their corresponding costs, $A$ is the restriction matrix, $b$ is a column vector, and $\mathcal{I}$ is the set of integer variables.

At each iteration, also called \emph{pumping cycle} (or, simply, \emph{pump}) of FP, an integer (infeasible) solution $\tilde{x}$ is used to build an auxiliary LP problem based on the linear relaxation of the original MILP problem. Precisely, the auxiliary problem aims at finding a solution ${x}^*$ with minimum distance from $\tilde{x}$ in the search space defined by $\{x:\, Ax \geq b\}$. Each new ${x}^*$ is rounded and used as the integer solution of the next iteration. The algorithm ideally stops when the current solution of the auxiliary problem is also integer (i.e.,  $\left[x^*\right]= x^{*}$, where $\left[x^*\right] = \lfloor x^*\,+\,0.5\rfloor$) and, thus, feasible for the original problem. Notice that FP only works in the continuous space of solutions that satisfy all the linear constraints of the original problem, and the objective function of the auxiliary problems is the one element that guides the fractional solutions into integer feasibility. %As accurately summarized by \cite{Fischetti2005}, ``\emph{FP generates two (hopefully convergent) trajectories of points $x^*$ and $\tilde{x}$ that satisfy feasibility in a complementary but partial way --- one satisfies the linear constraints, the other the integer requirement}''.

The original FP framework pays little attention to the quality of the solutions. In fact, the objective function of the original MILP problem is only taken into account to generate an initial fractional solution to be rounded and used in the first iteration. In all the subsequent iterations, the auxiliary LPs aim at minimizing distance functions that do not explore the original objective, which explains the poor quality of the solutions obtained \citep{Fischetti2005,Achterberg2007}. Some variations of the framework address this issue by combining, in the auxiliary problems, the original objective function with the distance metric. That is the case, for instance, of the Objective Feasibility Pump (OFP) \citep{Achterberg2007}, in which the transition from the original objective function to the distance-based auxiliary one is done gradually with the progress of the pumps.
We refer to \cite{Berthold2019} for a detailed survey on the several FP variations that have been proposed throughout the years to address possible drawbacks and convergence issues of the original framework.

In this work, we adopt both the original FP framework and OFP to find feasible solutions for $\mathcal{F}$. In the sequel, we only describe in details OFP, as it naturally generalizes FP. For simplicity, formulation $\mathcal{F}$ is used throughout the explanation, instead of a generic MILP.

Consider the vector $x$ of decision variables (one for each arc in $A$), as defined in Section~\ref{s_models}, and let $\tilde{x} \in \{0,1\}^{|A|}$ be a binary vector defining a not necessarily feasible solution for $\mathcal{F}$. The distance function used to guide the OFP framework into integer feasibility is defined as
\begin{eqnarray}
\Delta(x,\tilde{x}) = \sum\limits_{(i,j) \in A}\mid x_{ij}- \tilde{x}_{ij}\mid,
\end{eqnarray}
\noindent which can be rewritten in a linear manner as
\begin{eqnarray}
\Delta(x,\tilde{x}) = \sum\limits_{(i,j) \in A:\,\tilde{x}_{ij} = 0} {x_{ij}} + \sum\limits_{(i,j) \in A:\,\tilde{x}_{ij} = 1} {(1 - x_{ij})}.
\end{eqnarray}

Considering the $y$ decision variables on the selection of vertices in the solution routes (as defined in Section~\ref{s_models}), the objective function of the auxiliary problems solved at each iteration of OFP consists of a convex combination of the distance function $\Delta(x,\tilde{x})$ and the original objective function of $\mathcal{F}$. Precisely, 
\begin{eqnarray}
\Delta_{\gamma}(x,y,\tilde{x}) = \frac{(1- \gamma)}{\lVert \Delta(x,\tilde{x}) \rVert}\Delta(x,\tilde{x}) + \frac{\gamma}{\lVert \sum\limits_{i \in P}p_iy_i \rVert}\overbrace{(-\sum\limits_{i \in P}p_iy_i)}^{\mbox{\tiny minus } (\ref{csc100})} \label{ofp00},
\end{eqnarray}
\noindent with $\gamma \in [0,1]$, and $\lVert \cdot \rVert$ being the Euclidean norm of a vector. Notice that, in (\ref{ofp00}), we consider an alternative definition of $\mathcal{F}$ as a minimization problem, in which $\max \sum\limits_{i \in P}p_iy_i = \min (-\sum\limits_{i \in P}p_iy_i)$. Moreover, both $\Delta(x,\tilde{x})$ and the original objective function are normalized in order to avoid scaling issues. Also notice that, since $\tilde{x}$ is a constant vector, $\lVert \Delta(x,\tilde{x}) \rVert = \sqrt{|A|}$ in this case.

Now, consider the polyhedron $\Omega$ defined by the feasible region of $\mathcal{L}$, the linear relaxation of $\mathcal{F}$. Precisely, $\Omega = \{(x,y,f,\varphi) \in \mathbb{R}^{|A|}\times\mathbb{R}^{|N|}\times\mathbb{R}^{|A|}\times\mathbb{R}:\, (\ref{b101})$-$(\ref{csc110}),\,(\ref{csc114}),\, \textbf{0}\leq x \leq \textbf{1},\, \textbf{0}\leq y \leq \textbf{1} \mbox{ and } f \geq \textbf{0}\}$.
OFP works by iteratively solving auxiliary problems defined as
\begin{eqnarray}
D(x,y,\tilde{x},\gamma):\,\min\{\Delta_{\gamma}(x,y,\tilde{x}):\, ({x},{y},{f},{\varphi}) \in \Omega \},
\end{eqnarray}

\noindent where $\gamma$ balances the influence of the distance function and the original objective, i.e., the integer feasibility and the quality of the solution. Considering $\mathcal{F}$, the OFP algorithm is described in Figure~\ref{fig_ofp}.

\begin{figure}[!ht]
\begin{center}
\scalebox{1}
{
\framebox
{
\begin{minipage}[t]{25cm}
{\small
\begin{tabbing}
xxx\=xxx\=xxx\=xxx\=xxx\=xxx\=xxx\=xxx\=xxx\=xxx\= \kill
\textbf{Input: }{The model $\mathcal{F}$, \emph{max\_pumps} $\in \mathbb{Z}^+$, \emph{max\_pumps} $\geq 1$, $\lambda \in [0,1]$ and $K \in \mathbb{Z}^+$.} \\
\textbf{Output: }{Ideally, a feasible solution for $\mathcal{F}$.} \\
\textbf{\scriptsize1.} Initialize $\gamma \leftarrow 1$ and \emph{iter\_counter} $\leftarrow$ 1;\\
\textbf{\scriptsize2.} Solve $D(x,y,\textbf{0},\gamma)$, obtaining a solution $({x}^*,{y}^*,{f}^*,{\varphi}^*)$;\\
\textbf{\scriptsize3.} \textbf{if }{${x}^*$ is integer} \textbf{ then} \textbf{return} ${x}^*$;\\
\textbf{\scriptsize4.} $\tilde{x} \leftarrow \left[x^*\right]$ (= rounding of $x^*$);\\
\textbf{\scriptsize5.} \textbf{while} {(\emph{iter\_counter} $\leq$ \emph{max\_pumps})};\\
\textbf{\scriptsize6.} \> \> Update $\gamma \leftarrow \lambda\gamma$ and \emph{iter\_counter} $\leftarrow$ \emph{iter\_counter} + 1;\\
\textbf{\scriptsize7.} \> \> Solve $D(x,y,\tilde{x},\gamma)$, obtaining a solution $({x}^*,{y}^*,{f}^*,{\varphi}^*)$;\\
\textbf{\scriptsize8.} \> \> \textbf{if }{(${x}^*$ is integer)} \textbf{then} \textbf{return} ${x}^*$;\\
\textbf{\scriptsize9.} \> \> \textbf{if }{($\tilde{x} \neq \left[x^*\right]$)} \textbf{ then} {$\tilde{x} \leftarrow \left[x^*\right]$};\\
\textbf{\scriptsize10.} \> \> \textbf{else }{flip \emph{rand}$(K/2,\,3K/2)$ entries $\tilde{x}_{ij}$, $(i,j) \in A$, with highest $\mid x^*_{ij} - \tilde{x}_{ij} \mid$};\\
\textbf{\scriptsize11.} \textbf{end-while};\\
\textbf{\scriptsize12.} \textbf{return 0};
\end{tabbing}
 }
\end{minipage}
}
}
\end{center}
\caption{Description of the OFP algorithm when considering formulation $\mathcal{F}$.}
\label{fig_ofp}
\end{figure}

Aside from the corresponding model $\mathcal{F}$, the algorithm receives as input three values: the maximum number of iterations (pumps) to be performed (\emph{max\_pumps}), a rate by which the $\gamma$ value is decreased at each pump ($\lambda$) and a basis value ($K$) used to compute the amplitude of the perturbations to be performed in solutions that cycle.

At the beginning, $\gamma$ and a variable that keeps the number of the current iteration (\emph{iter\_counter}) are both set to one (line 1, Figure~\ref{fig_ofp}). Then, the current problem $D(x,y,\textbf{0},\gamma)$ is solved, obtaining a solution $({x}^*,{y}^*,{f}^*,{\varphi}^*)$. Notice that, since $\gamma =1$ at this point, $D(x,y,\textbf{0},\gamma)$ corresponds to $\mathcal{L}$, and the integer solution $\textbf{0}$ plays no role. If $x^*$ is integer, and, thus, the current solution is feasible for $\mathcal{F}$, the algorithm stops. Otherwise, the rounded value of $x^*$ is kept in a vector $\tilde{x}$ (see lines 2-4, Figure~\ref{fig_ofp}).

After the first pump, an iterative procedure takes place until either an integer feasible solution is found or the maximum number of iterations is reached. At each iteration, the $\gamma$ value is decreased by the fixed rate $\lambda$, and the iteration counter is updated. Then, $D(x,y,\tilde{x},\gamma)$ is solved, obtaining a solution $({x}^*,{y}^*,{f}^*,{\varphi}^*)$. If $x^*$ is integer at this point, the algorithm stops. Otherwise, it checks if the algorithm is caught up in a cycle of size one, i.e., if $\tilde{x}$ (the rounded solution from the previous iteration) is equal to $\left[x^*\right]$. If not, $\tilde{x}$ is simply updated to $\left[x^*\right]$. In turn, if a cycle is detected, the algorithm performs a perturbation on $\tilde{x}$. Precisely, a random integer in the open interval $(K/2,\,3K/2)$ is selected as the quantity of binary entries $\tilde{x}_{ij}$, $(i,j) \in A$, to be flipped to the opposite bound. This perturbation prioritizes entries that have highest values in the distance vector $\mid x^* - \tilde{x}\mid$. The loop described above is summarized at lines 5-11, Figure~\ref{fig_ofp}. At last, if no feasible solution is found within \emph{max\_pumps} iterations, the algorithm terminates with a null solution (line 12, Figure~\ref{fig_ofp}).

The original FP framework follows the same algorithm described in Figure~\ref{fig_ofp}, with the exception that the decrease rate $\lambda$ given as input is necessarily zero. Then, in the loop of lines 5-11, the $D(x,y,\tilde{x},\gamma)$ problems are solved under $\gamma = 0$, i.e., without taking into account the original objective function.

\section{A Large Neighborhood Search (LNS) heuristic with Path Relinking (PR)}
\label{s_LNS}
In this section, we describe an LNS heuristic for STOP, which we apply to improve the quality of the initial solutions obtained from the OFP algorithm described in the previous section. The original LNS metaheuristic framework \citep{Shaw1998} works by gradually improving an initial solution through a sequence of destroying and repairing procedures. In our heuristic, the LNS framework is coupled with classical local search procedures widely used to improve solutions of routing problems in general. In particular, these procedures, which are described in Section~\ref{s_local_searches}, are also present in most of the successful heuristics proposed to solve TOP (e.g., \cite{Vansteenwegen09,Ke08,Souffriau10,Kim13,Dang13,Ke2016}). The heuristic we propose also uses a memory component known as Path Relinking (PR). PR was devised by \cite{Glover1997} and its original version explores a neighborhood defined by the set of intermediate solutions --- namely, the ``path'' --- between two given solutions. The PR framework has also been successfully applied to solve TOP \citep{Souffriau10}. 

\subsection{Main algorithm}
We describe, in Figure~\ref{fig_LNS}, the general algorithm of the LNS heuristic we propose. The heuristic receives four inputs: an initial feasible solution --- built through OFP of FP---, the number of iterations to be performed (\emph{max\_iter}), the capacity of the \emph{pool} of solutions (\emph{max\_pool\_size}) and a parameter called \emph{stalling\_limit}, which manages how frequently the PR procedure is called. Precisely, it limits the number of iterations in stalling (i.e., with no improvement in the current best solution) before calling the PR procedure.
Initially, the variables that keep the current number of iterations (\emph{iter\_counter}) and the number of iterations since the last solution improvement (\emph{stalling\_counter}) are set to zero (line 1, Figure~\ref{fig_LNS}). Then, the initial solution $Y$ is improved through local search procedures (detailed in Section~\ref{s_local_searches}) and added to the initially empty \emph{pool} of solutions $\Lambda$ (lines 2 and 3, Figure~\ref{fig_LNS}).
\begin{figure}[!ht]
\begin{center}
\scalebox{1}
{
\framebox
{
\begin{minipage}[t]{25cm}
{\small
\begin{tabbing}
xxx\=xxx\=xxx\=xxx\=xxx\=xxx\=xxx\=xxx\=xxx\=xxx\= \kill
\textbf{Input: }{An initial feasible solution $Y$, \emph{max\_iter} $\in \mathbb{Z}^+$, \emph{max\_iter} $\geq 1$, \emph{max\_pool\_size} $\in \mathbb{Z}^+$},\\ {\emph{max\_pool\_size} $\geq 1$ and \emph{stalling\_limit} $\in \mathbb{Z}^+$, \emph{stalling\_limit} $\geq 1$.} \\
\textbf{Output: }{An ideally improved feasible solution.} \\
\textbf{\scriptsize1.} Initialize \emph{iter\_counter} $\leftarrow 0$ and \emph{stalling\_counter} $\leftarrow 0$;\\
\textbf{\scriptsize2.} Improve $Y$ through local searches (see Section~\ref{s_local_searches});\\
\textbf{\scriptsize3.} Initialize the \emph{pool} of solutions $\Lambda \leftarrow \{Y\}$;\\
\textbf{\scriptsize4.} \textbf{while} {(\emph{iter\_counter} $\leq$ \emph{max\_iter})};\\
\textbf{\scriptsize5.} \> \> Update $\emph{iter\_counter} \leftarrow \emph{iter\_counter} + 1$;\\
\textbf{\scriptsize6.} \> \> Randomly select a solution $Y'$ from $\Lambda$;\\
\textbf{\scriptsize7.} \> \> Partially destroy $Y'$ by removing vertices (see Section~\ref{s_destroy});\\
\textbf{\scriptsize8.} \> \> \textbf{do}\\
\textbf{\scriptsize9.} \> \> \> \> Improve $Y'$ through local searches (Section~\ref{s_local_searches});\\
\textbf{\scriptsize10.} \> \> \> \> \textbf{if} {($Y'$ is better than the best solution in $\Lambda$)} \textbf{then}\\
\textbf{\scriptsize11.} \> \> \> \> \> \> \emph{stalling\_counter} $\leftarrow -1$;\\
\textbf{\scriptsize12.} \> \> \> \> \> \> \textbf{if} {($|\Lambda| < \emph{max\_pool\_size}$)} \textbf{then} {$\Lambda \leftarrow \Lambda \cup \{Y'\}$};\\
\textbf{\scriptsize13.} \> \> \> \> \> \> \textbf{else} {Replace the worst solution in $\Lambda$ with $Y'$};\\
\textbf{\scriptsize14.} \> \> \> \> \textbf{end-if};\\
\textbf{\scriptsize15.} \> \> \textbf{while} {(perform inter-route \emph{shifting} perturbations on $Y'$) (Section~\ref{s_shifting})};\\
\textbf{\scriptsize16.} \> \> \textbf{if} {($Y'$ is better than the best solution in $\Lambda$)} \textbf{then} \\
\textbf{\scriptsize17.} \> \> \> \> $\emph{stalling\_counter} \leftarrow 0$;\\
\textbf{\scriptsize18.} \> \> \textbf{else} $\emph{stalling\_counter} \leftarrow \emph{stalling\_counter} + 1$;\\
\textbf{\scriptsize19.} \> \> \textbf{if} {(\emph{stalling\_counter} $\geq$ \emph{stalling\_limit})} \textbf{then} \\
\textbf{\scriptsize20.} \> \> \> \> {Perform the PR procedure considering $\Lambda$ and $Y'$ (see Section~\ref{s_pr})};\\
\textbf{\scriptsize21.} \> \> \> \> $\emph{stalling\_counter} \leftarrow 0$;\\
\textbf{\scriptsize22.} \> \> \textbf{end-if};\\
\textbf{\scriptsize23.} \> \> \textbf{if} {($Y'\notin \Lambda$)} \textbf{and} {($Y'$ is better than the worst solution in $\Lambda$)} \textbf{then} \\
\textbf{\scriptsize24.} \> \> \> \> \textbf{if} {($|\Lambda| < \emph{max\_pool\_size}$)} \textbf{then} {$\Lambda \leftarrow \Lambda \cup \{Y'\}$};\\
\textbf{\scriptsize25.} \> \> \> \> \textbf{else} {Replace the worst solution in $\Lambda$ with $Y'$};\\
\textbf{\scriptsize26.} \> \> \textbf{end-if};\\
\textbf{\scriptsize27.} \textbf{end-while};\\
\textbf{\scriptsize28.} \textbf{return} {best solution in $\Lambda$};
\end{tabbing}
 }
\end{minipage}
}
}
\end{center}
\caption{Description of the general LNS algorithm.}
\label{fig_LNS}
\end{figure}

At this point, an iterative procedure is performed \emph{max\_iter} times (lines 4-27, Figure~\ref{fig_LNS}).
First, the iteration counter is incremented, and a solution $Y'$ (randomly selected from the \emph{pool}) is partially destroyed by the removal of some vertices, as later described in Section~\ref{s_destroy}. 
Then, the algorithm successively tries to improve $Y'$ (lines 8-15, Figure~\ref{fig_LNS}). To this end, the local searches of Section~\ref{s_local_searches} are performed on $Y'$. If the improved $Y'$ is better than the best solution currently in the pool (i.e., its total profit sum is strictly greater), then the stalling counter is set to -1, and a copy of $Y'$ is added to $\Lambda$. The addition of solutions to the pool always considers its capacity. Accordingly, if the pool is not full, the new solution is simply added. Otherwise, it takes the place of the current worst solution in the pool (see lines 10-14, Figure~\ref{fig_LNS}).

At this point, the algorithm attempts to do vertex shifting perturbations (line 15, Figure~\ref{fig_LNS}), which are detailed in Section~\ref{s_shifting}. If it succeeds,
the algorithm resumes to another round of local searches.
Otherwise, the main loop proceeds by updating the stalling counter. Precisely, if the possibly improved $Y'$ obtained after the successive local searches and shifting perturbations has greater profit sum than the best solution in $\Lambda$, then \emph{stalling\_counter} is reset to zero. Otherwise, it is incremented by one (lines 16-18, Figure~\ref{fig_LNS}).
After that, the algorithm checks if the limit number of iterations in stalling was reached. If that is the case, the PR procedure, whose description is given in Section~\ref{s_pr}, is applied in the current iteration, and the stalling counter is reset once again (lines 19-22, Figure~\ref{fig_LNS}).

By the end of the main loop, it is checked if $Y'$ should be added to $\Lambda$, which only occurs if the solution does not already belong to the pool and its profit sum is greater than the current worst solution available (lines 23-26, Figure~\ref{fig_LNS}).
At last, the algorithm returns the best solution in the pool (line 28, Figure~\ref{fig_LNS}). In the next sections, we detail all the aforementioned procedures called within the heuristic.

\subsection{Destroying procedure}
\label{s_destroy}
The destroying procedure consists of removing some of the profitable vertices belonging to a given feasible solution. Consider a fixed parameter $\emph{removal\_percentage} \in [0,1]$. First, we determine an upper bound on the number of vertices to be removed, namely \emph{max\_number\_of\_removals}. This value is randomly selected in the open interval defined by zero and the product of \emph{removal\_percentage} and the quantity of profitable vertices in the solution. Then, the procedure sequentially performs \emph{max\_number\_of\_removals} attempts of vertex removal, such that, at each time, a visited vertex is randomly selected. Nevertheless, a vertex is only actually removed if it is profitable and the resulting route remains feasible.

\subsection{Insertion procedure}
\label{s_insertions}
Every time the insertion procedure is called, one of two possible priority orders on the unvisited vertices to the inserted is randomly chosen: non-increasing or non-decreasing orders of profits. Then, according to the selected order, the unvisited vertices are individually tested for insertion in the current solution. If a given vertex can be added to the solution (i.e., its addition does not make the routes infeasible), it is inserted in the route and position that increase the least the sum of the routes' time durations. Otherwise, the vertex remains unvisited and the next one in the sequence is tested for insertion.

\subsection{Local searches}
\label{s_local_searches}

Given a feasible solution, the local searches here adopted attempt to improve the solution quality, which, in this case, means either increasing the profit sum or decreasing the sum of the routes' times (while maintaining the same profit sum). The general algorithm sequentially performs inter and intra-route improvements, vertex replacements and attempts of vertex insertions, as summarized in Figure~\ref{fig_local_searches}.
The inter and intra-route improvements are detailed in Section~\ref{s_inter_intra_route_improvements}, while the vertex replacements are described in Section~\ref{s_vertex_replacements}. The attempts of vertex insertions (lines 3 and 5, Figure~\ref{fig_local_searches}) are done as described in Section~\ref{s_insertions}. The algorithm stops when it reaches a locally optimal solution with respect to the neighborhoods defined by the aforementioned improvement procedures, i.e., no more improvements are achieved.

\begin{figure}[!ht]
\begin{center}
\scalebox{1}
{
\framebox
{
\begin{minipage}[t]{25cm}
{\small
\begin{tabbing}
xxx\=xxx\=xxx\=xxx\=xxx\=xxx\=xxx\=xxx\=xxx\=xxx\= \kill
\textbf{Input: }{An initial feasible solution $Y$.} \\
\textbf{Output: }{A possibly improved version of $Y$.} \\
\textbf{\scriptsize1.} \textbf{do}\\
\textbf{\scriptsize2.} \> \> Do inter and intra-route improvements (Section~\ref{s_inter_intra_route_improvements});\\
\textbf{\scriptsize3.} \> \> Try to insert unvisited vertices in $Y$ (Section~\ref{s_insertions});\\
\textbf{\scriptsize4.} \> \> Do vertex replacements (Section~\ref{s_vertex_replacements});\\
\textbf{\scriptsize5.} \> \> Try to insert unvisited vertices in $Y$ (Section~\ref{s_insertions});\\
\textbf{\scriptsize6.} \textbf{while} (did any improvement on $Y$); \\
\textbf{\scriptsize7.} \textbf{return} {$Y$};
\end{tabbing}
 }
\end{minipage}
}
}
\end{center}
\caption{Description of the sequence of local searches.}
\label{fig_local_searches}
\end{figure}

\subsubsection{Inter and intra-route improvements}
\label{s_inter_intra_route_improvements}
\begin{figure}[!ht]
\begin{center}
\scalebox{1}
{
\framebox
{
\begin{minipage}[t]{25cm}
{\small
\begin{tabbing}
xxx\=xxx\=xxx\=xxx\=xxx\=xxx\=xxx\=xxx\=xxx\=xxx\= \kill
\textbf{Input: }{An initial feasible solution $Y$.} \\
\textbf{Output: }{A possibly improved version of $Y$.} \\
\textbf{\scriptsize1.} \textbf{do}\\
\textbf{\scriptsize2.} \> \> \emph{// inter-route improvements}\\
\textbf{\scriptsize3.} \> \> \textbf{do}\\
\textbf{\scriptsize4.} \> \> \> \> \textbf{for} (all combinations of two routes in $Y$) \textbf{do}\\
\textbf{\scriptsize5.} \> \> \> \> \> \> Do 1-1 vertex exchange;\\
\textbf{\scriptsize6.} \> \> \> \> \> \> Do 1-0 vertex exchange;\\
\textbf{\scriptsize7.} \> \> \> \> \> \> Do 2-1 vertex exchange;\\
\textbf{\scriptsize8.} \> \> \> \> \textbf{end-for};\\
\textbf{\scriptsize9.} \> \> \textbf{while} (did any inter-route improvement);\\
\textbf{\scriptsize10.} \> \> \emph{// intra-route improvements}\\
\textbf{\scriptsize11.} \> \> \textbf{for} (each route in $Y$) \textbf{do}\\
\textbf{\scriptsize12.} \> \> \> \> Do 3-opt improvement;\\
\textbf{\scriptsize13.} \> \> \textbf{end-for};\\
\textbf{\scriptsize14.} \textbf{while} (did any intra-route improvement); \\
\textbf{\scriptsize15.} \textbf{return} {$Y$};
\end{tabbing}
 }
\end{minipage}
}
}
\end{center}
\caption{Description of the sequence of the inter and intra-route local searches.}
\label{fig_inter_intra_route_improvements}
\end{figure}
At each iteration of the algorithm of Figure~\ref{fig_inter_intra_route_improvements}, the feasible solution available is first subject to inter-route improvements, i.e., procedures that exchange vertices between different routes (lines 2-9, Figure~\ref{fig_inter_intra_route_improvements}). Precisely, for all combinations of two routes, three kinds of vertex exchanges are performed: (i) 1-1, where a vertex from a route is exchanged with a vertex from another route, (ii) 1-0, where a vertex from a route is moved to another one and (iii) 2-1, where two adjacent vertices from a route are exchanged with a vertex from another route. In the three cases, given a pair of routes, an exchange is only allowed if it preserves the solution's feasibility and decreases the total sum of the routes' times. At a call of any of the vertex exchange procedures, the algorithm only performs a single exchange: the first possible by analyzing the routes from beginning to end. 

The sequence of inter-route improvements described above is performed until no more exchanges are possible. Then, the algorithm performs the intra-route improvements (lines 10-13, Figure~\ref{fig_inter_intra_route_improvements}). Precisely, for each route of the solution, the classical 3-opt operator \citep{Lin1965} is applied.
If any improvement is achieved through the 3-opt operator, the algorithm resumes the main loop by performing inter-route improvements once again. Otherwise, it returns the current solution and terminates (line 15, Figure~\ref{fig_inter_intra_route_improvements}).
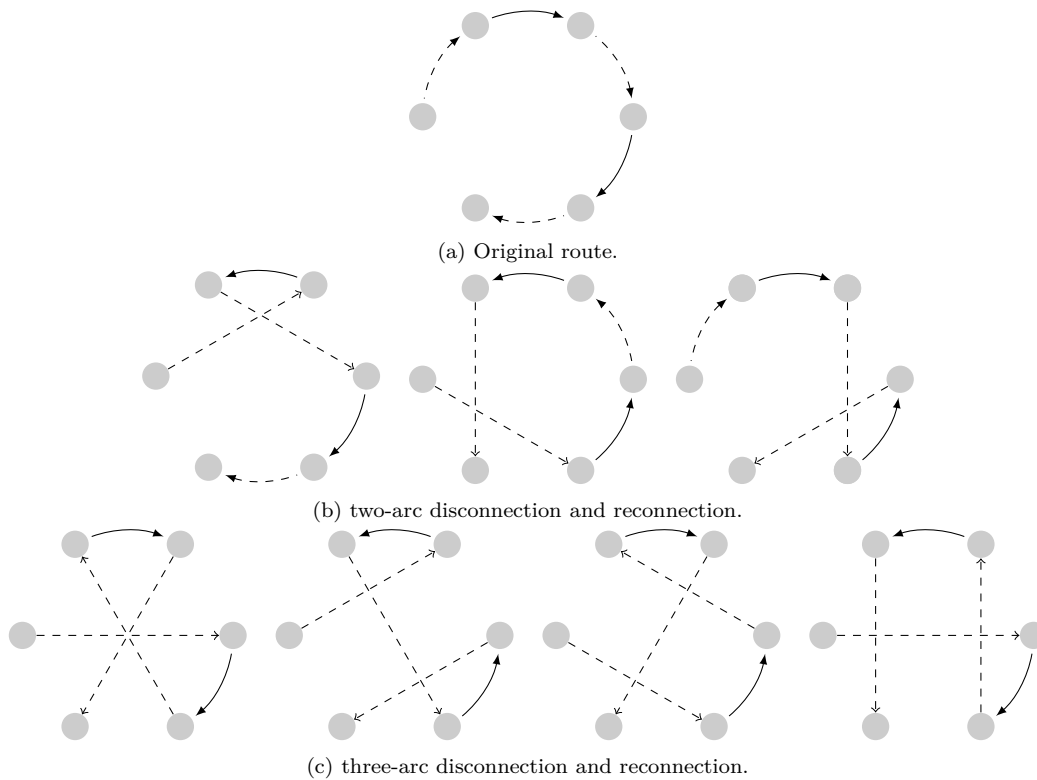
\begin{figure}
\centering
\begin{subfigure}{0.25\textwidth}
\centering
\begin{tikzpicture}
\def \n {6}
\def \radius {1.4cm}
\def \margin {10} % margin in angles, depends on the radius

\tikzstyle{state}=[
            circle,
            fill = gray!40,
            minimum size = 1mm
        ]

\foreach \s in {1,...,\n}
{
  \node[state] (\s) at ({60 * (\s - 1)}:\radius) {};
}

\foreach \s in {1,3,5}
{
  %\node[state] at ({60 * (\s - 1)}:\radius) {};
  \draw[dashed,<-, >=latex] ({60 * (\s - 1)+\margin}:\radius) 
    arc ({60 * (\s - 1)+\margin}:{60 * (\s)-\margin}:\radius);
}

\foreach \s in {2,6}
{
  %\node[state] at ({60 * (\s - 1)}:\radius) {};
  \draw[<-, >=latex] ({60 * (\s - 1)+\margin}:\radius) 
    arc ({60 * (\s - 1)+\margin}:{60 * (\s)-\margin}:\radius);
}

\end{tikzpicture}
\caption{Original route.}
\label{fig:example:original}
\end{subfigure}\\%
\begin{subfigure}{\textwidth}
\centering
\begin{tikzpicture}
\def \n {6}
\def \radius {1.4cm}
\def \margin {10} % margin in angles, depends on the radius

\tikzstyle{state}=[
            circle,
            fill = gray!40,
            minimum size = 1mm
        ]

\foreach \s in {1,...,\n}
{
  \node[state] (\s) at ({60 * (\s - 1)}:\radius) {};
}

\foreach \s in {5}
{
  \draw[dashed,<-, >=latex] ({60 * (\s - 1)+\margin}:\radius) 
    arc ({60 * (\s - 1)+\margin}:{60 * (\s)-\margin}:\radius);
}

\path[dashed,->] (3) edge node {} (1);
\path[dashed,->] (4) edge node {} (2);

\foreach \s in {2}
{
  \draw[->, >=latex] ({60 * (\s - 1)+\margin}:\radius) 
    arc ({60 * (\s - 1)+\margin}:{60 * (\s)-\margin}:\radius);
}

\foreach \s in {6}
{
  \draw[<-, >=latex] ({60 * (\s - 1)+\margin}:\radius) 
    arc ({60 * (\s - 1)+\margin}:{60 * (\s)-\margin}:\radius);
}

\end{tikzpicture}%
\quad
%\caption{First reconnection.}
%\end{subfigure}%
%\begin{subfigure}{0.25\textwidth}
%\centering
\begin{tikzpicture}
\def \n {6}
\def \radius {1.4cm}
\def \margin {10} % margin in angles, depends on the radius

\tikzstyle{state}=[
            circle,
            fill = gray!40,
            minimum size = 1mm
        ]

\foreach \s in {1,...,\n}
{
  \node[state] (\s) at ({60 * (\s - 1)}:\radius) {};
}

\foreach \s in {1}
{
  %\node[state] at ({60 * (\s - 1)}:\radius) {};
  \draw[dashed,->, >=latex] ({60 * (\s - 1)+\margin}:\radius) 
    arc ({60 * (\s - 1)+\margin}:{60 * (\s)-\margin}:\radius);
}

\foreach \s in {2,6}
{
  %\node[state] at ({60 * (\s - 1)}:\radius) {};
  \draw[->, >=latex] ({60 * (\s - 1)+\margin}:\radius) 
    arc ({60 * (\s - 1)+\margin}:{60 * (\s)-\margin}:\radius);
}

\path[dashed,->] (4) edge node {} (6);
\path[dashed,->] (3) edge node {} (5);

\end{tikzpicture}%
\quad
%\caption{Second reconnection.}
%\end{subfigure}%
%\begin{subfigure}{0.25\textwidth}
%\centering
\begin{tikzpicture}
\def \n {6}
\def \radius {1.4cm}
\def \margin {10} % margin in angles, depends on the radius

\tikzstyle{state}=[
            circle,
            fill = gray!40,
            minimum size = 1mm
        ]

\foreach \s in {1,...,\n}
{
  \node[state] at ({60 * (\s - 1)}:\radius) {};
}

\foreach \s in {3}
{
  \node[state] at ({60 * (\s - 1)}:\radius) {};
  \draw[dashed,<-, >=latex] ({60 * (\s - 1)+\margin}:\radius) 
    arc ({60 * (\s - 1)+\margin}:{60 * (\s)-\margin}:\radius);
}

\foreach \s in {2}
{
  \node[state] at ({60 * (\s - 1)}:\radius) {};
  \draw[<-, >=latex] ({60 * (\s - 1)+\margin}:\radius) 
    arc ({60 * (\s - 1)+\margin}:{60 * (\s)-\margin}:\radius);
}

\path[dashed,->] (2) edge node {} (6);
\path[dashed,->] (1) edge node {} (5);

\foreach \s in {6}
{
  \node[state] at ({60 * (\s - 1)}:\radius) {};
  \draw[->, >=latex] ({60 * (\s - 1)+\margin}:\radius) 
    arc ({60 * (\s - 1)+\margin}:{60 * (\s)-\margin}:\radius);
}

\end{tikzpicture}
\caption{two-arc disconnection and reconnection.}
\label{fig:example:2opt}
\end{subfigure}\\%
\begin{subfigure}{\textwidth}
\centering
\begin{tikzpicture}
\def \n {6}
\def \radius {1.4cm}
\def \margin {10} % margin in angles, depends on the radius

\tikzstyle{state}=[
            circle,
            fill = gray!40,
            minimum size = 1mm
        ]

\foreach \s in {1,...,\n}
{
  \node[state] (\s) at ({60 * (\s - 1)}:\radius) {};
}

\foreach \s in {2,6}
{
  %\node[state] at ({60 * (\s - 1)}:\radius) {};
  \draw[<-, >=latex] ({60 * (\s - 1)+\margin}:\radius) 
    arc ({60 * (\s - 1)+\margin}:{60 * (\s)-\margin}:\radius);
}

\path[dashed,->] (4) edge node {} (1);
\path[dashed,->] (6) edge node {} (3);
\path[dashed,->] (2) edge node {} (5);

\end{tikzpicture}%
\quad
%\caption{4th reconnection.}
%\end{subfigure}%
%\begin{subfigure}{0.25\textwidth}
%\centering
\begin{tikzpicture}
\def \n {6}
\def \radius {1.4cm}
\def \margin {10} % margin in angles, depends on the radius

\tikzstyle{state}=[
            circle,
            fill = gray!40,
            minimum size = 1mm
        ]

\foreach \s in {1,...,\n}
{
  \node[state] (\s) at ({60 * (\s - 1)}:\radius) {};
}

\foreach \s in {2,6}
{
  %\node[state] at ({60 * (\s - 1)}:\radius) {};
  \draw[->, >=latex] ({60 * (\s - 1)+\margin}:\radius) 
    arc ({60 * (\s - 1)+\margin}:{60 * (\s)-\margin}:\radius);
}

\path[dashed,->] (4) edge node {} (2);
\path[dashed,->] (3) edge node {} (6);
\path[dashed,->] (1) edge node {} (5);

\end{tikzpicture}%
\quad
%\caption{5th reconnection.}
%\end{subfigure}%
%\begin{subfigure}{0.25\textwidth}
%\centering
\begin{tikzpicture}
\def \n {6}
\def \radius {1.4cm}
\def \margin {10} % margin in angles, depends on the radius

\tikzstyle{state}=[
            circle,
            fill = gray!40,
            minimum size = 1mm
        ]

\foreach \s in {1,...,\n}
{
  \node[state] (\s) at ({60 * (\s - 1)}:\radius) {};
}

\foreach \s in {2}
{
  %\node[state] at ({60 * (\s - 1)}:\radius) {};
  \draw[<-, >=latex] ({60 * (\s - 1)+\margin}:\radius) 
    arc ({60 * (\s - 1)+\margin}:{60 * (\s)-\margin}:\radius);
}

\path[dashed,->] (4) edge node {} (6);
\path[dashed,->] (1) edge node {} (3);
\path[dashed,->] (2) edge node {} (5);

\foreach \s in {6}
{
  %\node[state] at ({60 * (\s - 1)}:\radius) {};
  \draw[->, >=latex] ({60 * (\s - 1)+\margin}:\radius) 
    arc ({60 * (\s - 1)+\margin}:{60 * (\s)-\margin}:\radius);
}

\end{tikzpicture}%
\quad
%\caption{6th reconnection.}
%\end{subfigure}%
%\begin{subfigure}{0.25\textwidth}
%\centering
\begin{tikzpicture}
\def \n {6}
\def \radius {1.4cm}
\def \margin {10} % margin in angles, depends on the radius

\tikzstyle{state}=[
            circle,
            fill = gray!40,
            minimum size = 1mm
        ]

\foreach \s in {1,...,\n}
{
  \node[state] (\s) at ({60 * (\s - 1)}:\radius) {};
}

\foreach \s in {6}
{
  %\node[state] at ({60 * (\s - 1)}:\radius) {};
  \draw[<-, >=latex] ({60 * (\s - 1)+\margin}:\radius) 
    arc ({60 * (\s - 1)+\margin}:{60 * (\s)-\margin}:\radius);
}

\path[dashed,->] (4) edge node {} (1);
\path[dashed,->] (6) edge node {} (2);
\path[dashed,->] (3) edge node {} (5);

\foreach \s in {2}
{
  %\node[state] at ({60 * (\s - 1)}:\radius) {};
  \draw[->, >=latex] ({60 * (\s - 1)+\margin}:\radius) 
    arc ({60 * (\s - 1)+\margin}:{60 * (\s)-\margin}:\radius);
}

\end{tikzpicture}
\caption{three-arc disconnection and reconnection.}
\label{fig:example:3opt}
\end{subfigure}
\caption{Example of a round of the 3-opt operator on an arbitrary route of a directed graph. The dashed arcs in the original route of (a) are the candidates for disconnection. The arc rearrangements of (b) and (c) disconnect two and three of the original arcs, respectively. Notice that, in some cases of reconnection, some arcs of the original route are preserved and others are reversed.}
\label{fig:example:kopt}
\end{figure}

Without loss of generality, a $k$-opt operator works by repeatedly disconnecting a given route in up to $k$ places and, then, testing for improvement all the possible routes obtained from reconnecting the initially damaged route in all the feasible manners. The operator terminates when no more improvements are possible from removing (and repairing) any combination of $k$ or less arcs. At this point, the route is called $k$-optimal. 
In Figure~\ref{fig:example:kopt}, we give an example of a round of the 3-opt operator on an arbitrary route of a directed graph. Accordingly, the original route of Figure~\ref{fig:example:original} is disconnected in three places --- identified by dashed arcs ---, and reconnected within seven possible manners (Figures~\ref{fig:example:2opt} and~\ref{fig:example:3opt}). The three first ones (Figure~\ref{fig:example:2opt}) are also the moves of the 2-opt operator, as one of the initially removed arcs is always restored (or reversed). In the latter cases (Figure~\ref{fig:example:3opt}), three of the original arcs are actually discarded. Notice that, depending on the type of reconnection, some arcs of the original route have to be reversed. Thus, in the case of not necessarily complete graphs (such as STOP), some of the rearrangements might not be always feasible.

\subsubsection{Vertex replacements}
\label{s_vertex_replacements}
Given a feasible solution, the vertex replacement procedure (Figure~\ref{fig_vertex_replacements}) works by replacing visited vertices with currently unvisited ones. The selection of unvisited vertices to be inserted in the solution is done exactly as in the insertion procedure described in Section~\ref{s_insertions}, i.e., it follows an either non-decreasing or non-increasing order (chosen at random) of vertex profits.
The procedure considers two types of replacements, namely 1-1 and 2-1 \emph{unvisited vertex exchanges}. The former replaces a visited vertex with an unvisited one, while, in the latter, two visited vertices are replaced by an unvisited one. A replacement is only allowed if it preserves the solution's feasibility and either (i) increases the profit sum or (ii) decreases the sum of the routes' times while maintaining the same profit sum.
At a call of any of the two types of replacements, the algorithm performs a single replacement: the first feasible one found by analyzing the routes from beginning to end.

\begin{figure}[!ht]
\begin{center}
\scalebox{1}
{
\framebox
{
\begin{minipage}[t]{25cm}
{\small
\begin{tabbing}
xxx\=xxx\=xxx\=xxx\=xxx\=xxx\=xxx\=xxx\=xxx\=xxx\= \kill
\textbf{Input: }{An initial feasible solution $Y$.} \\
\textbf{Output: }{A possibly improved version of $Y$.} \\
\textbf{\scriptsize1.} \textbf{do}\\
\textbf{\scriptsize2.} \> \> Do 1-1 unvisited vertex exchange;\\
\textbf{\scriptsize3.} \> \> Do 2-1 unvisited vertex exchange;\\
\textbf{\scriptsize4.} \textbf{while} (did any improvement); \\
\textbf{\scriptsize5.} \textbf{return} {$Y$};
\end{tabbing}
 }
\end{minipage}
}
}
\end{center}
\caption{Description of the sequence of vertex replacements.}
\label{fig_vertex_replacements}
\end{figure}

\subsection{Inter-route shifting perturbation}
\label{s_shifting}
The inter-route shifting perturbation algorithm works by individually testing if the visited vertices can be moved to any other route, as described in Figure~\ref{fig_shifting}. Initially, a copy $Y'$ of the initial solution $Y$ is done (line 1, Figure~\ref{fig_shifting}). Then, $Y$ is only used as a reference, while all the moves are performed in $Y'$, as to avoid multiple moves of a same vertex.
For each route, the algorithm attempts to move each of its vertices (either mandatory or profitable) to a different route in that solution, such that the destination route has the least possible increase in its time duration (lines 3-5, Figure~\ref{fig_shifting}). A vertex can be moved to any position of the destination route, as far as both the origin and the destination routes remain feasible. In this case, moves that increase the total sum of the routes' durations are also allowed.

\begin{figure}[!ht]
\begin{center}
\scalebox{1}
{
\framebox
{
\begin{minipage}[t]{25cm}
{\small
\begin{tabbing}
xxx\=xxx\=xxx\=xxx\=xxx\=xxx\=xxx\=xxx\=xxx\=xxx\= \kill
\textbf{Input: }{An initial feasible solution $Y$.} \\
%\textbf{Output: }{A possibly modified solution $Y'$.} \\
\textbf{\scriptsize1.} Create a copy $Y'$ of $Y$;\\
\textbf{\scriptsize2.} \textbf{for} {(each route $r_1$ in $Y$)} \textbf{do}\\
\textbf{\scriptsize3.} \> \> \textbf{for} {(each vertex $i$ in $r_1$)} \textbf{do} \\
\textbf{\scriptsize4.} \> \> \> \> In $Y'$, try to move $i$ to a route $r_2$, $r_2 \neq r_1$, with least time increase;\\
\textbf{\scriptsize5.} \> \> \textbf{end-for}; \\
\textbf{\scriptsize6.} \> \> \textbf{if} {(any move from $r_1$ was done)} \textbf{then} \\
\textbf{\scriptsize7.} \> \> \> \> Try to insert unvisited vertices in $r_1$ of $Y'$;\\
\textbf{\scriptsize8.} \> \> \textbf{end-if};\\
\textbf{\scriptsize9.} \textbf{end-for}; \\
\textbf{\scriptsize10.} Replace $Y$ with $Y'$;
\end{tabbing}
 }
\end{minipage}
}
}
\end{center}
\caption{Description of the inter-route shifting perturbation.}
\label{fig_shifting}
\end{figure}

If at least one vertex is relocated during the attempt to move vertices from a given route, the algorithm proceeds by trying to insert in that route currently unvisited vertices (lines 6-8, Figure~\ref{fig_shifting}). In this case, vertices are inserted according to the procedure described in Section~\ref{s_insertions}, but only in the route under consideration. After all the originally visited vertices are tested for relocation, $Y$ is replaced by the possibly modified $Y'$ (line 10, Figure~\ref{fig_shifting}).

\subsection{The PR procedure}
\label{s_pr}

The PR procedure works by exploring neighborhoods connecting an initial solution to the ones of a given set of feasible solutions. Here, this set corresponds to the pool of solutions $\Lambda$, which plays the role of a long-term memory. Precisely, at each iteration of the PR procedure, we compute the intermediate solutions --- namely, the ``path'' --- between the initial solution and a solution selected from $\Lambda$, as detailed in Figure~\ref{fig_pr}. The algorithm receives as input an initial solution $Y$, the current pool of solutions $\Lambda$, the capacity of $\Lambda$ (\emph{max\_pool\_size}) and a similarity limit $\epsilon_2 \in [0,1]$ used to determine which pairs of solutions are eligible to be analyzed.

\begin{figure}[!ht]
\begin{center}
\scalebox{1}
{
\framebox
{
\begin{minipage}[t]{25cm}
{\small
\begin{tabbing}
xxx\=xxx\=xxx\=xxx\=xxx\=xxx\=xxx\=xxx\=xxx\=xxx\= \kill
\textbf{Input: }{An initial feasible solution $Y$, the pool of solutions $\Lambda$, \emph{max\_pool\_size} $\in \mathbb{Z}^+$},\\ 
{\emph{max\_pool\_size} $\geq 1$ and a similarity limit $\epsilon_2$.} \\
\textbf{Output: }{The possibly updated pool $\Lambda$.} \\
\textbf{\scriptsize1.} Set $\emph{best\_solution} \leftarrow Y$;\\
\textbf{\scriptsize2.} \textbf{for} {(each solution $X \in \Lambda$)} \textbf{do}\\
\textbf{\scriptsize3.} \> \> \emph{// Checks similarity between $X$ and $Y$}\\ 
\textbf{\scriptsize4.} \> \> \textbf{if} {($(2\times n_{X\cap Y})/(n_X + n_Y) < \epsilon_2$)} \textbf{then} \\
\textbf{\scriptsize5.} \> \> \> \> $\emph{current\_solution} \leftarrow$ best solution in the ``path'' from $Y$ to $X$ (see Figure~\ref{fig_pr_iter});\\
\textbf{\scriptsize6.} \> \> \> \> \textbf{if} {(\emph{current\_solution} is better than \emph{best\_solution})} \textbf{then} \\
\textbf{\scriptsize7.} \> \> \> \> \> \> $\emph{best\_solution} \leftarrow \emph{current\_solution}$;\\
\textbf{\scriptsize8.} \> \> \> \> \textbf{end-if};\\
\textbf{\scriptsize9.} \> \> \> \> $\emph{current\_solution} \leftarrow$ best solution in the ``path'' from $X$ to $Y$ (see Figure~\ref{fig_pr_iter});\\
\textbf{\scriptsize10.} \> \> \> \> \textbf{if} {(\emph{current\_solution} is better than \emph{best\_solution})} \textbf{then} \\
\textbf{\scriptsize11.} \> \> \> \> \> \> $\emph{best\_solution} \leftarrow \emph{current\_solution}$;\\
\textbf{\scriptsize12.} \> \> \> \> \textbf{end-if};\\
\textbf{\scriptsize13.} \> \> \textbf{end-if};\\
\textbf{\scriptsize14.} \textbf{end-for}; \\
\textbf{\scriptsize15.} \textbf{if} {($\emph{best\_solution}\notin \Lambda$)} \textbf{and} {(\emph{best\_solution} is better than the worst solution in $\Lambda$)} \textbf{then} \\
\textbf{\scriptsize16.} \> \> \textbf{if} {($|\Lambda| < \emph{max\_pool\_size}$)} \textbf{then} {$\Lambda \leftarrow \Lambda \cup \{\emph{best\_solution}\}$};\\
\textbf{\scriptsize17.} \> \> \textbf{else} {Replace the worst solution in $\Lambda$ with $\emph{best\_solution}$};\\
\textbf{\scriptsize18.} \textbf{end-if};\\
\textbf{\scriptsize19.} \textbf{return} {$\Lambda$};
\end{tabbing}
 }
\end{minipage}
}
}
\end{center}
\caption{Description of the general PR procedure.}
\label{fig_pr}
\end{figure}

Initially, the variable that keeps the best solution found so far in the PR procedure (\emph{best\_solution}) is set to the initial solution $Y$ (line 1, Figure~\ref{fig_pr}). Then, for each solution $X \in \Lambda$, we compute its similarity with $Y$, which is given by $(2\times n_{X\cap Y})/(n_X + n_Y)$, where $n_X$ and $n_Y$ stand for the number of vertices visited in the solutions $X$ and $Y$, respectively, and $n_{X \cap Y}$ is the number of vertices common to both solutions. If the similarity metric is inferior to the input limit $\epsilon_2$, the procedure computes the ``path'' from $Y$ to $X$ and the one from $X$ to $Y$. In both cases, the best solution found in the corresponding ``path'' (kept in the variable \emph{current\_solution}) is compared with the best solution found so far in the whole PR procedure (\emph{best\_solution}). Then, if applicable (i.e., if the profit sum of \emph{current\_solution} is greater than that of \emph{best\_solution}), \emph{best\_solution} is updated to \emph{current\_solution}. 
%The process of computing the ``path'' between two given solutions is later detailed in Figure~\ref{fig_pr_iter}.
The whole loop described above is summarized at lines 2-14, Figure~\ref{fig_pr}.
After that, the algorithm attempts to add \emph{best\_solution} to $\Lambda$ (lines 15-18, Figure~\ref{fig_pr}). Then, the possibly updated pool is returned and the procedure terminates (line 19, Figure~\ref{fig_pr}). 

\begin{figure}[!ht]
\begin{center}
\scalebox{1}
{
\framebox
{
\begin{minipage}[t]{25cm}
{\small
\begin{tabbing}
xxx\=xxx\=xxx\=xxx\=xxx\=xxx\=xxx\=xxx\=xxx\=xxx\= \kill
\textbf{Input: }{A starting solution $Y_s$ and a guiding one $Y_g$.}\\
\textbf{Output: }{The possibly improved solution.} \\
\textbf{\scriptsize1.} $\emph{best\_solution} \leftarrow \emph{current\_solution} \leftarrow Y_s$;\\
\textbf{\scriptsize2.} $\emph{vertices\_to\_add} \leftarrow$ vertices that are visited in $Y_g$ and not in $Y_s$;\\
\textbf{\scriptsize3.} Sort $\emph{vertices\_to\_add}$ in terms of vertex profits (non-increasing or non-decreasing order);\\
\textbf{\scriptsize4.} \textbf{while} {($\emph{vertices\_to\_add} \neq \emptyset$)} \textbf{do}\\
\textbf{\scriptsize5.} \> \> \textbf{while} {($\emph{vertices\_to\_add} \neq \emptyset$) \textbf{and} (there is a feasible route in \emph{current\_solution})} \textbf{do}\\
\textbf{\scriptsize6.} \> \> \> \> Remove a vertex $i$ from \emph{vertices\_to\_add}, according to the sorting;\\
\textbf{\scriptsize7.} \> \> \> \> Try to insert $i$ in a still feasible route of \emph{current\_solution}, allowing infeasibility;\\
\textbf{\scriptsize8.} \> \> \textbf{end-while};\\
\textbf{\scriptsize9.} \> \> \textbf{for} (each infeasible route $j$ in \emph{current\_solution}) \textbf{do}\\
\textbf{\scriptsize10.} \> \> \> \> Sequentially remove profitable vertices from $j$ to restore its feasibility;\\
\textbf{\scriptsize11.} \> \> \textbf{end-for};\\
\textbf{\scriptsize12.} \> \> Improve \emph{current\_solution} through local searches (Section~\ref{s_local_searches});\\
\textbf{\scriptsize13.} \> \> \textbf{if} {(\emph{current\_solution} is better than \emph{best\_solution})} \textbf{then} \\
\textbf{\scriptsize14.} \> \> \> \> $\emph{best\_solution} \leftarrow \emph{current\_solution}$;\\
\textbf{\scriptsize15.} \> \> \textbf{end-if};\\
\textbf{\scriptsize16.} \textbf{end-while};\\
\textbf{\scriptsize17.} \textbf{return} \emph{best\_solution};
\end{tabbing}
 }
\end{minipage}
}
}
\end{center}
\caption{Algorithm for computing the ``path'' between two solutions.}
\label{fig_pr_iter}
\end{figure}

Given two input feasible solutions --- a \emph{starting} one $Y_s$ and a \emph{guiding} one $Y_g$ ---, the procedure of computing the ``path'' between them is outlined in Figure~\ref{fig_pr_iter}. Initially, the variables that keep the current and the best solutions found so far in the ``path'' from $Y_s$ and $Y_g$ are both set to $Y_s$ (line 1, Figure~\ref{fig_pr_iter}). Then, the set of vertices eligible for insertion (namely \emph{vertices\_to\_add}) is defined as the vertices that are visited in $Y_g$ and do not belong to $Y_s$ (line 2, Figure~\ref{fig_pr_iter}). Notice that all the vertices in \emph{vertices\_to\_add} are profitable, since $Y_s$ and $Y_g$ are both feasible and, thus, visit all the mandatory vertices. After that, \emph{vertices\_to\_add} is sorted in non-increasing or non-decreasing order of vertex profits (line 3, Figure~\ref{fig_pr_iter}). This order is chosen at random at each call of the procedure. 

Then, while there are eligible vertices in \emph{vertices\_to\_add}, the procedure alternates between adding vertices to the current solution --- even when it leads to infeasible routes --- and removing vertices to restore feasibility. Precisely, at each iteration of the main loop of Figure~\ref{fig_pr_iter} (lines 4-16), the algorithm attempts to add vertices to \emph{current\_solution} as follows. First, the next vertex in the order established for \emph{vertices\_to\_add} is removed from the set. Then, if possible, such vertex is inserted in the route and position that increase the least the sum of the routes' time durations. In this case, an insertion that makes a route infeasible in terms of time limit is also allowed. Nevertheless, once a route becomes infeasible, it is no longer considered for further insertions. The rounds of insertion continue until either \emph{vertices\_to\_add} gets empty or all the routes of \emph{current\_solution} become infeasible (see lines 5-8, Figure~\ref{fig_pr_iter}). 

At this point, the algorithm removes profitable vertices from \emph{current\_solution} to restore its feasibility. In particular, for each infeasible route, profitable vertices are sequentially removed in non-decreasing order of its profits until all the routes become feasible again (lines 9-11, Figure~\ref{fig_pr_iter}). If the removal of a vertex would disconnect the route it belongs, then this vertex is preserved. In these cases, the next candidate vertex is considered for removal, and so on. 

After the removals, the algorithm attempts to improve the now feasible \emph{current\_solution} through the local searches described in Section~\ref{s_local_searches} (line 12, Figure~\ref{fig_pr_iter}), and, if applicable, \emph{best\_solution} is updated to \emph{current\_solution} (lines 13-15, Figure~\ref{fig_pr_iter}). After the main loop terminates, \emph{best\_solution} is returned (line 17, Figure~\ref{fig_pr_iter}).

\section{Implementation details}
\label{s_implementation_details}
%In this section, we discuss some of the implementation choices made in this work.
All the codes were developed in C++, and the LP problems that arise in the cutting-plane used to reinforce formulation $\mathcal{F}$ were solved by means of the optimization solver ILOG CPLEX 12.6\footnote{http://www-01.ibm.com/software/commerce/optimization/cplex-optimizer/}. We kept the default configurations of CPLEX in our implementation.

In the sequel, we describe some specific implementation choices for the separation of cuts, along with the parameter configurations adopted in each algorithm. These configurations were established according to previous studies in the literature, as well as to pilot tests on a control set of 10 STOP instances, composed of both challenging instances and some of the smallest ones. This control set is detailed in Table~\ref{table_control_instances}, where we report, for each instance, the number of vertices ($|N|$), the number of vehicles ($|M|$) and the route duration limit ($T$). The reduced number of instances was chosen as a way to prevent the parameter configuration of the heuristic from being biased by the control set (overfitting). The whole benchmark of instances adopted in this study is later detailed in Section~\ref{s_benchmarks}.

\begin{table}[!ht]
\center
\caption{Control set of STOP instances used to tune the algorithms' parameters.}
\label{table_control_instances}
\begin{tabular}{lrrr}
\toprule
\textbf{Instance} & \multicolumn{1}{c}{$|N|$} & \multicolumn{1}{c}{$|M|$} & \multicolumn{1}{c}{$T$} \\
\midrule
p3.3.r\_5\% & 33 & 3 & 33.3 \\
p4.3.j\_5\% & 100 & 3 & 46.7 \\
p4.3.n\_5\% & 100 & 3 & 60.0 \\
p5.3.m\_5\% & 66 & 3 & 21.7 \\
p5.3.r\_5\% & 66 & 3 & 30.0 \\
p6.2.k\_5\% & 64 & 2 & 32.5 \\
p6.3.m\_5\% & 64 & 3 & 25.0 \\
p6.3.n\_5\% & 64 & 3 & 26.7 \\
p7.3.o\_5\% & 102 & 3 & 100.0 \\
p7.3.p\_5\% & 102 & 3 & 106.7 \\
\bottomrule
\end{tabular}
\end{table}

\subsection{Cut separation}
Regarding the separation of CCCs, we solved maximum flow sub-problems with the implementation of the preflow push-relabel algorithm of \cite{Goldberg1988} provided by the open-source Library for Efficient Modeling and Optimization in Networks --- LEMON~\citep{Dezso2011}. The knapsack sub-problems that arise during the separation of LCIs were solved through classical dynamic programming based on Bellman recursion \citep{Bellman57}.

While selecting cuts, we adopted the \emph{absolute violation} criterion to determine which inequalities are violated by a given solution and the so-called \emph{distance} criterion to properly compare two cuts, i.e., to determine which one is most violated by a solution. Given an $n$-dimensional column vector $w$ of binary variables, a point $\bar{w} \in \mathbb{R}^n$ and an inequality of the general form $a^T w \leq b$, with $a \in \mathbb{R}^n$, $b \in \mathbb{R}$, the absolute violation of this inequality with respect to $\bar{w}$ is simply given by $a^T \bar{w} - b$. Moreover, the distance from $a^T w \leq b$ to $\bar{w}$ corresponds to the Euclidean distance between the hyperplane $a^T w = b$ and $\bar{w}$, which is equal to $\frac{(a^T \bar{w} - b)}{\lVert a \rVert}$, where $\lVert a \rVert$ is the Euclidean norm of $a$.

We set two parameters for each type of inequality separated in the cutting-plane algorithm: a precision one used to classify the inequalities into violated or not (namely \emph{absolute violation precision}), and another one to discard cuts that are not sufficiently orthogonal to the most violated ones. The latter parameter determines the minimum angle that an arbitrary cut must form with the most violated cut, as not to be discarded. In practice, this parameter establishes the maximum acceptable inner product between the arbitrary cut and the most violated one. Accordingly, we call it the \emph{maximum inner product}. In the case of two inequalities $a_1^T w \leq b_1$ and $a_2^T w \leq b_2$, with $a_1,a_2 \in \mathbb{R}^n$ and $b_1,b_2 \in \mathbb{R}$, the inner product between them is given by $\frac{(a_1^Ta_2)}{\lVert a_1 \rVert \lVert a_2 \rVert}$ and corresponds to the cosine of the angle defined by them. Since AVICs are separated by complete enumeration, the aforementioned parameters do not apply to them. The values adopted for these parameters are shown in Table~\ref{table_parameters}. The tolerance input value $\epsilon_1$ of the cutting-plane algorithm was set to $10^{-3}$.
\begin{table}[!ht]
\center
\caption{Parameter configuration adopted in the separation and selection of valid inequalities in the cutting-plane used to reinforce $\mathcal{F}$.}
\label{table_parameters}
\begin{tabular}{llllll}
\toprule
& &  \multicolumn{2}{c}{\textbf{Parameter}} \\
\cmidrule{3-4}
\textbf{Inequalities} &  & Absolute violation precision & Maximum inner product\\
\midrule
CCCs & & 0.01 & 0.03\\
AVICs & & ~~-- & ~~-- \\
LCIs & & $10^{-5}$  & ~~--\\
\bottomrule
\end{tabular}
\end{table}

\subsection{Parameter configuration adopted for the FP and the LNS heuristics}
In Tables~\ref{table_fp_parameters} and~\ref{table_LNS_parameters}, we summarize the values adopted for the input parameters of the FP framework described in Figure~\ref{fig_ofp} and the LNS heuristic, respectively. In the latter table, we include the parameters related to the procedures called within the main algorithm described in Figure~\ref{fig_LNS}.

\begin{table}[!ht]
\center
\caption{Parameter configuration adopted for the FP heuristic.}
\label{table_fp_parameters}
\begin{tabular}
    {lllll
    }
\toprule
%& &  \multicolumn{2}{c}{\textbf{Parameter}} \\
%\cmidrule{3-4}
\textbf{Parameter} &  & \emph{max\_pumps} & $\lambda$ & $K$\\
\midrule
\textbf{Value} & & 2000 & 0.9 & 10\\
\bottomrule
\end{tabular}
\end{table}
%
%\subsection{Parameter configuration adopted for the LNS heuristic}
\begin{table}[!ht]
\center
\caption{Parameter configuration adopted for the LNS heuristic.}
\label{table_LNS_parameters}
\begin{adjustbox}{width=1\textwidth}
\begin{tabular}
    {lcccccc
    }
\toprule
%& &  \multicolumn{2}{c}{\textbf{Parameter}} \\
%\cmidrule{3-4}
\textbf{Parameter} &  & \emph{max\_iter} & \emph{max\_pool\_size} & \emph{stalling\_limit} & \emph{removal\_percentage} & $\epsilon_2$\\
\midrule
\textbf{Value} & & \{1000,\,2000,\,5000\} & 20 & 100 & 0.75 & 0.9\\
\bottomrule
\end{tabular}
\end{adjustbox}
\end{table}

\section{Computational experiments}
\label{s_experiments}
The computational experiments were performed on a 64 bits Intel Core i7-4790K machine with 4.0 GHz and 15.0 GB of RAM, under Linux operating system. The machine has four physical cores, each one running at most two threads in hyper-threading mode. In our experiments, we tested several variations of the LNS heuristic obtained by considering FP and OFP, the original formulation $\mathcal{F}$ and its reinforced version, as well as different numbers of iterations for the main LNS algorithm. These variations are later detailed in Sections~\ref{s_results_FP} and~\ref{s_results_LNS}.

\subsection{Benchmark instances} 
\label{s_benchmarks}
In our experiments, we used the benchmark of instances adopted by~\cite{Assuncao2019}, which consists of complete graphs with up to 102 vertices. These instances were generated based on the TOP ones of \cite{Chao96} by randomly selecting vertices to be mandatory. In each instance, only a small percentage of the vertices (5\%) is set as mandatory, as to avoid infeasibility. The 387 instances in the benchmark are divided into seven data sets, according to the number of vertices of their graphs. Within a given data set, the instances only differ by the time limit imposed on the route duration and the number of vehicles. The characteristics of the seven data sets are detailed in table~\ref{table_instances}. For each set, it is reported the number of instances (\#), the number of vertices in the graphs ($|N|$), the possible numbers of vehicles available ($|M|$) and the range of values that the route duration limit $T$ assumes. 
\begin{table}[!ht]
\center
\caption{Description of the benchmark of STOP instances adopted in the experiments.}
\label{table_instances}
\begin{tabular}{cccccccc}
\toprule
\textbf{Set} & 1$\_5\%$ & 2$\_5\%$ & 3$\_5\%$ & 4$\_5\%$ & 5$\_5\%$ & 6$\_5\%$ & 7$\_5\%$\\
\midrule
\# & 54 & 33 & 60 & 60 & 78 & 42 & 60\\
$|N|$ & 32 & 21 & 33 & 100 & 66 & 64 & 102\\
$|M|$ & 2--4 & 2--4 & 2--4 & 2--4 & 2--4 & 2--4 & 2--4 \\
$T$  & 3.8--22.5 & 1.2--42.5 & 3.8--55 & 3.8--40 & 1.2--65 & 5--200 & 12.5--120 \\
\bottomrule
\end{tabular}
\end{table}

According to previous experiments~\citep{Assuncao2019}, instances whose graphs have greater dimensions are the hardest (sets 4\_5\%, 5\_5\% and 7\_5\%) to be solved to optimality. Moreover, within a same set, instances with greater route duration limits (given by $T$) also tend to be more difficult. This is possibly due to the fact that greater limits imply more feasible routes, thus increasing the search space.

In this work, we pre-processed all the instances by removing vertices and arcs that are inaccessible with respect to the limit $T$ imposed on the total traverse times of the routes. To this end, we considered the $R$ matrix defined in the beginning of Section~\ref{s_models}, which keeps, for each pair of vertices, the time duration of a minimum time path between them.
Moreover, we deleted all the arcs that either enter the origin $s$ or leave the destination $t$, as to implicitly satisfy constraints (\ref{b104}). Naturally, the time spent in these pre-processings are included in the execution times of the algorithms tested.

\subsection{Statistical analysis adopted}
Since all the heuristic algorithms proposed have randomized choices within their execution, we ran each algorithm 10 times for each instance to properly assess their performance. In this sense, we considered a unique set of 10 seeds common to all algorithms and instances tested. To evaluate the quality of the solutions obtained by the heuristics proposed, we compared them with the best known primal solutions/bounds provided by the cutting-plane algorithm of \cite{Assuncao2019}.

To assess the statistical significance of the results, we follow the tests suggested by \cite{Demsar2006} for the simultaneous comparison of multiple algorithms on different data (instance) sets. Precisely, we first apply the Iman-Davenport test~\citep{Iman80} to check the so-called \emph{null hypothesis}, i.e., the occurrence of equal performances with respect to a given indicator (e.g., quality of the solution's bounds). If the null hypothesis is rejected, and, thus, the algorithms' performances differ in a statistically significant way, a post-hoc test is performed to analyze these differences more closely. In our study, we adopt the post-hoc test proposed by \cite{Nemenyi63}.

The Iman-Davenport test is a more accurate version of the non-parametric Friedman test~\citep{Friedman37}. In both tests, the algorithms considered are ranked according to an indicator of performance. Let $I$ be the set of instances and $J$ be the set of algorithms considered. In our case, the algorithms are ranked for each instance separately, such that $r^j_i$ stands for the rank of an algorithm $j \in J$ while solving an instance $i \in I$. Accordingly, the value of $r^j_i$ lies within the interval $[1,|J|]$, such that better performances are linked to smaller ranks (in case of ties, average ranks are assigned).
The average ranks (over all instances) of the algorithms --- given by $R_j = \frac{1}{|I|}\sum\limits_{i \in I}{r^j_i}$ for all $j \in J$ --- are used to compute a statistic $F_F$, which follows the F distribution \citep{Sheskin2007}.

%The Friedman test compares the average ranks (over all instances) of the algorithms, which are given by $R_j = \frac{1}{|I|}\sum\limits_{i \in I}{r^j_i}$ for all $j \in J$. Then, the Friedman statistic is defined as 
%\begin{eqnarray}
%\chi^2_F = \frac{12|I|}{|J|(|J| -1)}\left[\sum\limits_{j \in J}{R^2_j}-\frac{|J|(|J|+1)^2}{4}\right] \label{friedman01}
%\end{eqnarray}
%\noindent and is distributed according to the chi-squared ($\chi^2$) distribution with $|J|-1$ degrees of freedom when $|I|$ and $|J|$ are large enough (as a rule of thumb, $|I| > 10$ and $|J| > 5$). For smaller numbers of algorithms and instances, exact critical values have been computed~\citep{Sheskin2007}.
%By its turn, the less conservative statistic used in the Iman-Davenport test is given by
%\begin{eqnarray}
%F_F = \frac{(|I|-1)\chi^2_F}{|I|(|J| -1)-\chi^2_F} \label{friedman02}
%\end{eqnarray}
%\noindent and follows the F distribution with $|J|-1$ and $(|J|-1)(|I|-1)$ degrees of freedom. The corresponding critical values can also be found in the book of \cite{Sheskin2007}.

Critical values are determined considering a significance level $\alpha$, which, in this case, indicates the probability of the null hypothesis being erroneously rejected. In practical terms, the smaller $\alpha$, the greater the statistical confidence of the test. Accordingly, in the case of the Iman-Davenport test, the null hypothesis is rejected if the statistic $F_F$ is greater than the critical value. In our experiments, we alternatively test the null hypothesis by determining (through a statistical computing software) the so-called $p$-value, which provides the smallest level of significance at which the null hypothesis would be rejected. In other words, given an appropriate significance level $\alpha$ (usually, at most 5\%), we can safely discard the null hypothesis if $p$-value $\leq \alpha$.

Once the null hypothesis is rejected, we can apply the post-hoc test of \cite{Nemenyi63}, which compares the algorithms in a pairwise manner. The performances of two algorithms $j,k \in J$ are significantly different if the corresponding average ranks $R_j$ and $R_k$ differ by at least a Critical Difference (CD).
%\begin{eqnarray}
%CD = q_{\alpha}\sqrt{\frac{|J|(|J|+1)}{6|I|}} \label{friedman03},
%\end{eqnarray}
%\noindent where the critical value $q_\alpha$ is based on the Studentized range statistic.
In our experiments, we used the R open software for statistical computing\footnote{https://www.r-project.org/} to compute all of the statistics needed, including the average ranks and the CDs.

\subsection{Bound improvement provided by the new valid inequalities}
We first analyzed the impact of the new inequalities proposed in Section~\ref{s_cuts} on the strength of the formulation $\mathcal{F}$. To this end, we computed the dual (upper) bounds obtained from adding these inequalities to $\mathcal{L}$ (the linear relaxation of $\mathcal{F}$) according to 10 different configurations, as described in Table~\ref{table_cuts_configurations}. To ease comparisons, we also display the results for previous inequalities proposed in the literature, namely the General Connectivity Constraints (GCCs) of \cite{Bianchessi2017} and the Conflict Cuts (CCs) of \cite{Assuncao2019}. For each instance and configuration, we solved the cutting-plane algorithm described in Section~\ref{s_cutting-plane} while considering only the types of inequalities of the corresponding configuration. For the configurations including GCCs and CCs, we used, within the cutting-plane algorithm, the separation procedures adopted by \cite{Assuncao2019}. 
In all these experiments, the CPLEX built-in cuts are disabled. 

\begin{table}[!ht]
\center
\caption{Configurations of valid inequalities. GCCs and CCs stand for the General Connectivity Constraints of \cite{Bianchessi2017} and the Conflict Cuts (CCs) of \cite{Assuncao2019}, respectively.}
\label{table_cuts_configurations}
\begin{tabular}{lccccc}
\toprule
& \multicolumn{5}{c}{Inequalities}\\
\cmidrule{2-6}
\textbf{Configuration} & GCCs & CCs & CCCs & LCIs & AVICs\\
\midrule
1 & $\times$ & & & &\\
2 & & $\times$ & & &\\
3 & & & $\times$ & &\\
4 & & & &  $\times$  & \\
5  & & & & &  $\times$ \\
6  & $\times$ & $\times$ & & &\\
7  & $\times$ & $\times$ & & $\times$ &\\
8  & $\times$ & $\times$ & & $\times$ & $\times$\\
9 & & & $\times$ & $\times$ &\\
10 & & & $\times$ & $\times$ & $\times$\\
\bottomrule
\end{tabular}
\end{table}

The results obtained are detailed in Table~\ref{table_results_lps_stop}.
The first column displays the name of each instance set. Then, for each configuration of inequalities, we give the average and the standard deviation (over all the instances in each set) of the percentage bound improvements obtained from the addition of the corresponding inequalities. Without loss of generality, given an instance, its percentage improvement in a configuration $i \in \{1,\dots,10\}$ is given by $100 \cdot \frac{UB_{LP} - UB_{i}}{UB_{LP}}$, where $UB_{LP}$ denotes the bound provided by $\mathcal{L}$, and $UB_{i}$ stands for the bound obtained from solving the cutting-plane algorithm of Section~\ref{s_cutting-plane} in the configuration $i$.
The last row displays the numerical results while considering the whole benchmark of instances.

\begin{landscape}
\begin{table}[ht]
\caption{Percentage dual (upper) bound improvements obtained from adding to $\mathcal{L}$ the inequalities of Section~\ref{s_cuts} according to the 10 configurations in Table~\ref{table_cuts_configurations}.}
\label{table_results_lps_stop}
\begin{subtable}{1\textwidth}
%\sisetup{table-format=2.2} 
\setlength\tabcolsep{3pt}
\centering
    \begin{tabular}
    {l
    S[table-format=2.2]
    S[table-format=2.2]
    S[table-format=2.2]
    S[table-format=2.2]
    S[table-format=2.2]
    S[table-format=2.2]
    S[table-format=2.2]
    S[table-format=2.2]
    S[table-format=2.2]
    S[table-format=2.2]
    S[table-format=2.2]
    S[table-format=2.2]
    S[table-format=2.2]
    S[table-format=2.2]
    S[table-format=2.2]
    }
\toprule
& & \multicolumn{14}{c}{Configuration of inequalities}\\
\cmidrule{3-16}
 & & \multicolumn{2}{c}{1 --- GCCs} & & \multicolumn{2}{c}{2 --- CCs} & & \multicolumn{2}{c}{3 --- CCCs} & & \multicolumn{2}{c}{4 --- LCIs} & & \multicolumn{2}{c}{5 --- AVICs} \\ 
 \cmidrule{3-4} \cmidrule{6-7} \cmidrule{9-10} \cmidrule{12-13} \cmidrule{15-16}
\textbf{Set }& & {Avg (\%)} & {StDev (\%)} & & {Avg (\%)} & {StDev (\%)} & & {Avg (\%)} & {StDev (\%)} & & {Avg (\%)} & {StDev (\%)} & & {Avg (\%)} & {StDev (\%)} \\
\midrule
1\_5\% & & 7.32 & 4.67 & & 8.83 & 5.98 & & 8.88 & 4.09 & & 1.00 & 3.93 & & 5.05 & 3.41\\
2\_5\% & & 0.32 & 0.90 & & 0.86 & 2.44 & & 0.64 & 1.81 & & 0.51 & 1.43 & & 0.22 & 0.61\\
3\_5\% & & 1.48 & 1.35 & & 2.71 & 1.90 & & 3.79 & 2.60 & & 0.59 & 0.99 & & 1.39 & 1.14\\
4\_5\% & & 5.94 & 5.62 & & 5.15 & 6.46 & & 8.80 & 7.33 & & 0.00 & 0.01 & & 5.52 & 5.01\\
5\_5\% & & 0.18 & 0.65 & & 0.86 & 1.46 & & 1.10 & 2.01 & & 0.02 & 0.10 & & 0.22 & 0.43\\
6\_5\% & & 0.06 & 0.17 & & 0.69 & 2.24 & & 0.18 & 0.45 & & 0.03 & 0.07 & & 0.18 & 0.53\\
7\_5\% & & 5.96 & 4.24 & & 8.95 & 9.58 & & 9.69 & 4.77 & & 0.00 & 0.00 & & 7.30 & 5.97\\
\midrule
\textbf{Total}& & 3.18 & 4.46& & 4.03 & 5.95& & 5.01 & 5.59& & 0.26 & 1.52& & 3.00 & 4.31\\
\midrule
\end{tabular}
\end{subtable}

\bigskip
\begin{subtable}{1\textwidth}
\sisetup{table-format=2.2} 
\setlength\tabcolsep{3pt}
\centering
    \begin{tabular}
    {l
    S[table-format=2.2]
    S[table-format=2.2]
    S[table-format=2.2]
    S[table-format=2.2]
    S[table-format=2.2]
    S[table-format=2.2]
    S[table-format=2.2]
    S[table-format=2.2]
    S[table-format=2.2]
    S[table-format=2.2]
    S[table-format=2.2]
    S[table-format=2.2]
    S[table-format=2.2]
    S[table-format=2.2]
    S[table-format=2.2]
    }
& & \multicolumn{14}{c}{Configuration of inequalities}\\
\cmidrule{3-16}
 & & \multicolumn{2}{c}{6 --- GCCs \& CCs} & & \multicolumn{2}{c}{7 --- 6 \& LCIs} & & \multicolumn{2}{c}{8 --- 7 \& AVICs} & & \multicolumn{2}{c}{9 --- CCCs \& LCIs} & & \multicolumn{2}{c}{10 --- 9 \& AVICs}\\ 
 \cmidrule{3-4} \cmidrule{6-7} \cmidrule{9-10} \cmidrule{12-13} \cmidrule{15-16}
\textbf{Set }& & {Avg (\%)} & {StDev (\%)} & & {Avg (\%)} & {StDev (\%)} & & {Avg (\%)} & {StDev (\%)} & & {Avg (\%)} & {StDev (\%)} & & {Avg (\%)} & {StDev (\%)} \\
\midrule
1\_5\% & & 9.26 & 6.01 & & 9.46 & 6.00 & & 9.49 & 6.05 & & 9.77 & 6.65 & & 9.13 & 4.01\\
2\_5\% & & 0.85 & 2.41 & & 0.97 & 2.75 & & 1.00 & 2.82 & & 1.12 & 3.18 & & 1.21 & 3.41\\
3\_5\% & & 2.71 & 1.87 & & 3.22 & 2.05 & & 3.28 & 2.04 & & 4.00 & 2.73 & & 4.07 & 2.80\\
4\_5\% & & 7.27 & 6.62 & & 7.27 & 6.63 & & 7.52 & 6.58 & & 8.83 & 7.31 & & 10.00 & 9.77\\
5\_5\% & & 0.84 & 1.32 & & 0.87 & 1.40 & & 0.88 & 1.39 & & 1.08 & 1.97 & & 1.25 & 2.08\\
6\_5\% & & 0.70 & 2.24 & & 0.73 & 2.23 & & 0.74 & 2.24 & & 0.21 & 0.45 & & 0.28 & 0.55\\
7\_5\% & & 10.16 & 8.85 & & 10.08 & 8.59 & & 10.43 & 8.60 & & 9.55 & 4.70 & & 10.63 & 5.41\\
\midrule
\textbf{Total} & & 4.63 & 6.14& & 4.75 & 6.09& & 4.86 & 6.14& & 5.19 & 5.97& & 5.52 & 6.56\\
\bottomrule
\end{tabular}
\end{subtable}

\end{table}
\end{landscape}

The results exposed in Table~\ref{table_results_lps_stop} indicate that, on average, CCCs are the inequalities that strengthen formulation $\mathcal{F}$ the most, followed by CCs, GCCs, AVICs and LCIs. The results also show that coupling CCCs with LCIs and AVICs gives the best average bound improvement (5.52\%) among all the configurations of inequalities tested.

One may notice that, for some instance sets, coupling different types of inequalities gives worse average bound improvements than considering only a subset of them (see, e.g., set 5\_5\% under configuration 2 and 6, Table~\ref{table_results_lps_stop}). Also notice that, although CCCs dominate GCCs and CCs (Corollary~\ref{corol_dominance}), there are cases where CCCs give worse average bound improvements than CCs (see set 2\_5\%, Table~\ref{table_results_lps_stop}). Both behaviours can be explained by the fact that these inequalities are separated heuristically, as the separation procedures only consider the cuts that are violated by at least a constant factor. Moreover, they adopt a stopping condition based on bound improvement of subsequent iterations, which might halt the separation before all the violated cuts are found.
Then, in practical terms, the separation algorithms adopted do not necessarily give the actual theoretical bounds obtained from the addition of the inequalities proposed.

For completeness, we display, in Table~\ref{table_results_extra_stop}, the average number of cuts separated for each class of inequalities when considering configuration 10 --- the one adopted in our FP algorithms. We omitted the average number of AVICs, as they are separated by complete enumeration and the number of these cuts in each instance always corresponds to $2\times|E|$. 

\begin{table}
\caption{Average number of cuts separated by the cutting-plane algorithm running under configuration 10. AVICs were omitted, as they are separated by complete enumeration.}
\label{table_results_extra_stop}
\setlength\tabcolsep{2pt}
\centering
    \begin{tabular}
    {ll
    S[table-format=3.2]
    S[table-format=3.2]
    }
\toprule
\textbf{Set } & & {LCIs} & {CCCs} \\
\midrule
1\_5\% & & 1.78 & 13.26\\
2\_5\% & & 0.33 & 1.00\\
3\_5\% & & 3.12 & 10.80\\
4\_5\% & & 1.02 & 88.57\\
5\_5\% & & 0.67 & 17.41\\
6\_5\% & & 0.29 & 0.71\\
7\_5\% & & 0.38 & 68.28\\
\midrule
\textbf{Total} & & 1.14 & 31.51\\
\midrule
\end{tabular}
\end{table}

\subsection{Results for the FP algorithms}
\label{s_results_FP}
We first compared four variations of the FP framework discussed in Section~\ref{s_FP}, as summarized in Table~\ref{table_fp_variations}. Precisely, we considered both FP and OFP while based on the original formulation $\mathcal{F}$ and the one reinforced according to the cutting-plane algorithm described in Section~\ref{s_cutting-plane}. The latter formulation is referred to as $\mathcal{F}$+cuts.

\begin{table}[!ht]
\center
\caption{Variations of FP analyzed in our study. $\mathcal{F}$+cuts stands for the reinforced version of the formulation $\mathcal{F}$ discussed in Section~\ref{s_cutting-plane}.}
\label{table_fp_variations}
\begin{tabular}{lccccc}
\toprule
& \multicolumn{2}{c}{Framework} & & \multicolumn{2}{c}{Base formulation}\\
\cmidrule{2-3} \cmidrule{5-6}
\textbf{Algorithm} & FP & OFP & & $\mathcal{F}$ & $\mathcal{F}$+cuts\\
\midrule
{FP}\_{raw} & $\times$ & & & $\times$ & \\
{FP}\_{cuts} & $\times$ & & & & $\times$\\
{OFP}\_{raw} & & $\times$ & & $\times$ & \\
{OFP}\_{cuts}  & & $\times$ & & & $\times$\\
\bottomrule
\end{tabular}
\end{table}

In Table~\ref{table_detailed_results_fp}, we report the results obtained by the four FP algorithms described in Table~\ref{table_fp_variations}. For each algorithm and instance set, we report four values: (i) the average and (ii) the standard deviation of the relative gaps given by $100 \cdot \frac{LB^* - LB}{LB^*}$, where $LB^*$ is the best known lower (primal) bound for the instance (provided by the exact algorithm of \cite{Assuncao2019}), and $LB$ is the average bound (over the 10 executions) obtained by the corresponding FP algorithm; (iii) the average number of iterations/pumps (over the 10 executions and all instances of the set), and (iv) the average wall-clock processing time (in seconds).
%for each solution and algorithm, the average bounds $LB$ used in (i) only consider the executions (out of the 10 attempts) that converge to feasible solutions. 
We highlight that, for the cases where a feasible solution is not found in any of the 10 executions, the corresponding relative gaps of (i) are set to 100\%. The same is done in case no primal bound is available from the baseline exact algorithm.
Moreover, when the instance is proven to be infeasible, the gap is set to 0\%, as well as when $LB^* = LB = 0$.
Entries in bold discriminate the cases where the reinforcement of the original formulation led to better performances or not.

\begin{table}[!ht]
\caption{Summary of the results obtained by the four FP algorithms described in Table~\ref{table_fp_variations}. Bold entries highlight, for each instance set, the best algorithm(s) in terms of average gaps and number of pumps.}
\label{table_detailed_results_fp}
\begin{subtable}{1\textwidth}
%\sisetup{table-format=2.2} 
\setlength\tabcolsep{3pt}
\centering
    \begin{tabular}
    {l
    S[table-format=2.2]
    S[table-format=2.2]
    S[table-format=2.2]
    S[table-format=2.2]
    S[table-format=2.2]
    S[table-format=2.2]
    S[table-format=2.2]
    S[table-format=2.2]
    S[table-format=2.2]
    S[table-format=2.2]
    }
\toprule
 & & \multicolumn{4}{c}{FP\_{raw}} & & \multicolumn{4}{c}{FP\_{cuts}} \\ 
 \cmidrule{3-6} \cmidrule{8-11}
  & & \multicolumn{2}{c}{Gap (\%)} & & & & \multicolumn{2}{c}{Gap (\%)} & &\\
  \cmidrule{3-4} \cmidrule{8-9}
\textbf{Set }& & {Avg} & {StDev} & {Pumps (\#)} & {Time (s)} & & {Avg} & {StDev} & {Pumps (\#)} & {Time (s)} \\
\midrule
1\_5\% & & 26.60 & 29.92 & 5.86 & 0.06 & & \textbf{26.19} & 29.58 & \textbf{4.59} & 0.10\\
2\_5\% & & 11.74 & 23.61 & 2.19 & 0.00 & & \textbf{9.18} & 20.82 & \textbf{1.75} & 0.00\\
3\_5\% & & 29.47 & 26.30 & 7.32 & 0.08 & & \textbf{28.24} & 24.33 & \textbf{6.12} & 0.14\\
4\_5\% & & 29.69 & 24.62 & 86.59 & 39.38 & & \textbf{18.49} & 19.70 & \textbf{53.41} & 56.76\\
5\_5\% & & 27.76 & 26.33 & 38.06 & 3.33 & & \textbf{24.25} & 25.67 & \textbf{34.91} & 10.96\\
6\_5\% & & \textbf{12.46} & 16.87 & 5.23 & 0.45 & & 13.66 & 18.07 & \textbf{5.05} & 0.70\\
7\_5\% & & 27.66 & 34.21 & 161.26 & 109.12 & & \textbf{16.42} & 24.44 & \textbf{53.40} & 83.76\\
\midrule
\textbf{Total} & & 25.12 & 27.51 & 48.80 & 23.77 & & \textbf{20.60} & 24.48 & \textbf{25.88} & 24.11\\
\midrule
\end{tabular}
\end{subtable}
%\bigskip
\begin{subtable}{1\textwidth}
%\sisetup{table-format=2.2} 
\setlength\tabcolsep{3pt}
\centering
    \begin{tabular}
    {l
    S[table-format=2.2]
    S[table-format=2.2]
    S[table-format=2.2]
    S[table-format=2.2]
    S[table-format=2.2]
    S[table-format=2.2]
    S[table-format=2.2]
    S[table-format=2.2]
    S[table-format=2.2]
    S[table-format=2.2]
    }
\toprule
 & & \multicolumn{4}{c}{OFP\_{raw}} & & \multicolumn{4}{c}{OFP\_{cuts}} \\ 
 \cmidrule{3-6} \cmidrule{8-11}
  & & \multicolumn{2}{c}{Gap (\%)} & & & & \multicolumn{2}{c}{Gap (\%)} & &\\
  \cmidrule{3-4} \cmidrule{8-9}
\textbf{Set }& & {Avg} & {StDev} & {Pumps (\#)} & {Time (s)} & & {Avg} & {StDev} & {Pumps (\#)} & {Time (s)} \\
\midrule
1\_5\% & & 25.46 & 29.64 & 19.74 & 0.30 & & \textbf{23.80} & 26.71 & \textbf{18.09} & 0.44\\
2\_5\% & & 11.54 & 23.70 & 9.14 & 0.01 & & \textbf{11.18} & 22.00 & \textbf{8.07} & 0.02\\
3\_5\% & & 27.30 & 23.63 & 25.65 & 0.38 & & \textbf{26.81} & 23.41 & \textbf{24.03} & 0.54\\
4\_5\% & & 28.46 & 24.36 & 102.70 & 61.43 & & \textbf{20.21} & 20.43 & \textbf{72.20} & 80.50\\
5\_5\% & & 24.48 & 25.10 & 65.49 & 7.46 & & \textbf{21.77} & 23.48 & \textbf{53.63} & 16.86\\
6\_5\% & & 13.50 & 21.02 & 66.72 & 7.77 & & \textbf{12.49} & 16.22 & \textbf{18.89} & 3.09\\
7\_5\% & & 25.20 & 31.66 & 187.30 & 133.95 & & \textbf{15.24} & 23.20 & \textbf{90.91} & 175.60\\
\midrule
\textbf{Total} & & 23.49 & 26.45 & 72.91 & 32.74 & & \textbf{19.67} & 23.06 & \textbf{45.08} & 43.59\\
\bottomrule
\end{tabular}
\end{subtable}
\end{table}
First, notice that, in terms of solution quality, the results of the four algorithms are rather poor, as the average relative gaps and standard deviations are quite high. Nevertheless, these results do not indicate unsuccessful performances, as the general FP framework was devised to find feasible solutions, usually at the expense of solution quality. As expected, the quality of the solutions obtained by the algorithms that use the OFP framework is slightly superior (on average) to that of solutions obtained by the algorithms based on pure FP. Nevertheless, as we will discuss later, we cannot assure that the gain of OFP is statistically significant in this case.

More importantly, the results suggest that, on average, the reinforcement of the original formulation $\mathcal{F}$ improves both the quality of the solutions obtained and the convergence (signaled by the number of pumps) of the algorithms to feasible solutions. In particular, for all instance sets but 6\_5\%, the average gaps of the solutions obtained by FP\_{cuts} are strictly smaller (and, thus, better) than those obtained by FP\_{raw}. In the case of the average numbers of pumps, FP\_{cuts} outperforms FP\_{raw} for all instance sets.
When comparing OFP\_{cuts} and OFP\_{raw}, the same pattern 
is verified. Regarding average execution times, the four algorithms are comparable.

We applied the Iman-Davenport and the Nemenyi tests to validate the statistical significance of the results discussed above. While comparing the relative gaps, we ranked the algorithms according to the values $100 \cdot \left(1 - \frac{LB^* - LB}{LB^*}\right)$, as to make smaller gaps imply greater ranks. Similarly, while comparing the number of pumps, the algorithms were ranked based on the maximum number of pumps (parameter \emph{max\_iter} of Figure~\ref{fig_ofp}) minus the actual number of pumps performed. With respect to the relative gaps, we obtained the statistic $F_F = 10.76$, with $p$-value = $5.62\cdot10^{-7}$. Regarding the number of pumps, we obtained the statistic $F_F = 94.06$, with $p$-value = $2.2\cdot10^{-16}$. Then, in both cases, we can safely reject the null hypothesis of equal performances.

To compare the performance of the algorithms in a pairwise manner, we proceeded with the Nemenyi test. Figure~\ref{fig:FP_ranks} depicts the average ranks of the four FP algorithms and the Critical Difference (CD) while considering a significance level $\alpha = 5\%$. Connections between algorithms indicate non-significant differences, i.e., the difference between the corresponding pair of average ranks is no greater than the CD. Figures~\ref{fig:FP_ranks_a} and~\ref{fig:FP_ranks_b} are based on the relative gaps and the number of pumps, respectively.

With respect to the relative gaps, we can conclude that the performance of OFP\_cuts is significantly better than those of OFP\_raw and FP\_raw. Moreover, FP\_cuts outperforms FP\_raw.
Nevertheless, we cannot conclude if OFP\_cuts is significantly better than FP\_cuts, neither that OFP\_raw is better than FP\_raw in this same indicator. Regarding the convergence (number of pumps), FP\_cuts outperforms all the other algorithms. In summary, from the results, we can safely conclude that the reinforcement of $\mathcal{F}$ here applied yields better performances both in terms of convergence and solution quality. In addition, considering both indicators, FP\_cuts stands out as the best option.
\begin{figure}[!ht]
\begin{subfigure}{\textwidth}
\centering
    \includegraphics[scale=0.7]{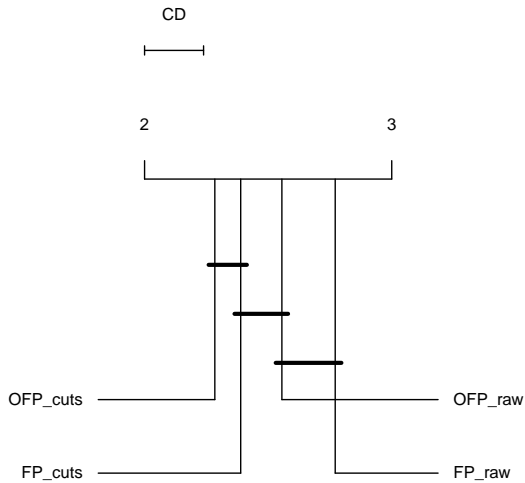}
    \caption{Ranks based on the relative gaps.}
  \label{fig:FP_ranks_a}
\end{subfigure}\\
\begin{subfigure}{\textwidth}
\centering
  \includegraphics[scale=0.7]{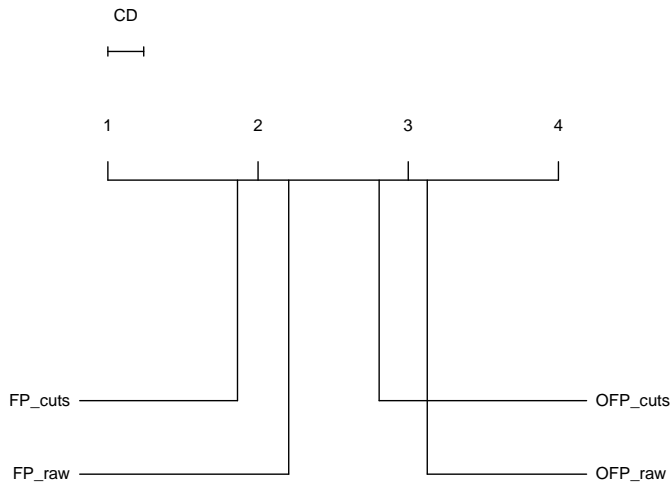}
  \caption{Ranks based on the number of pumps.}
  \label{fig:FP_ranks_b}
  \end{subfigure}
  \caption{FP algorithms' average ranks depicted on number lines, along with the Critical Difference (CD), when considering a significance level $\alpha = 5\%$. Connections between algorithms indicate non-significant differences.}
  \label{fig:FP_ranks}
\end{figure}

% For completeness, we display, in Table~\ref{table_bound_improvements}, the dual (upper) bound improvements obtained from the addition of the inequalities discussed in Section~\ref{s_cuts} to the original formulation $\mathcal{F}$. The first column displays the name of each instance set, while the second and third columns show the average and the standard deviation (over all the instances in each set) of the percentage bound improvements given by $100 \cdot \frac{UB_{\mathcal{F}} - UB_{\mathcal{F}+cuts}}{UB_{\mathcal{F}}}$. Here, $UB_{\mathcal{F}}$ denotes the dual bound provided by the linear relaxation of $\mathcal{F}$, and $UB_{\mathcal{F}+cuts}$ stands for the bound obtained from solving the cutting-plane algorithm of Section~\ref{s_cutting-plane}.
% The last row displays the numerical results while considering the complete benchmark of instances.

% \begin{table}[!ht]
% \caption{Percentage dual (upper) bound improvements obtained from adding to $\mathcal{F}$ the inequalities of Section~\ref{s_cuts}.}
% \label{table_bound_improvements}
% \centering
%     \begin{tabular}
%     {l
%     l
%     S[table-format=2.2]
%     S[table-format=2.2]
%     }
% \toprule
% \textbf{Set} & & {Avg (\%)} & {StDev (\%)} \\
% \midrule
% 1\_5\% & & 9.13 & 4.01\\
% 2\_5\% & & 1.21 & 3.41\\
% 3\_5\% & & 4.07 & 2.80\\
% 4\_5\% & & 10.00 & 9.77\\
% 5\_5\% & & 1.25 & 2.08\\
% 6\_5\% & & 0.28 & 0.55\\
% 7\_5\% & & 10.63 & 5.41\\
% \midrule
% \textbf{Total} & & 5.52 & 6.56\\
% \bottomrule
% \end{tabular}
% \end{table}

\subsection{Results for the LNS heuristic}
\label{s_results_LNS}
In the experiments concerning the LNS heuristic, we first tested the impact of the number of iterations on the quality of the final solutions obtained. To this end, we tested six variations of the heuristic by considering the two best FP algorithms --- FP\_cuts and OFP\_cuts --- coupled with the LNS framework of Section~\ref{s_LNS} running for \emph{max\_iter} $\in \{1000,2000,5000\}$ iterations. Hereafter, the variations of our heuristic are named (O)FP\_cuts\_LNS\_\emph{max\_iter}. Accordingly, without loss of generality, an algorithm that receives an initial solution provided by FP\_cuts and runs the LNS heuristic for 1000 iterations is referred to as FP\_cuts\_LNS\_1000.

In Table~\ref{table_results_LNS}, we summarize the results obtained by the six LNS algorithms tested. For each algorithm and instance set, we report five values: (i) the average and (ii) the standard deviation of the relative gaps given by $100 \cdot \frac{LB^* - LB}{LB^*}$, where $LB$ is the average bound (over the 10 executions) obtained by the corresponding algorithm. Recall that $LB^*$ is the best known bound for the instance (provided by the exact algorithm of \cite{Assuncao2019}); (iii) the average (over the 10 executions and all instances of the set) profit sum of the solutions obtained, and (iv) the average (over all instances of the set) profit sum of the best (in each round of 10 executions) solutions found. At last, we provide (v) the average wall-clock processing time (in seconds) of the execution of the LNS steps, excluding the time spent by the FP algorithm in finding the initial solution.
We highlight that, for the cases where a feasible solution is not provided in any of the 10 executions of the corresponding FP algorithm, the due relative gaps of (i) are set to 100\%. The same is done in case no primal bound is available from the baseline exact algorithm. Moreover, negative gaps indicate that the heuristic gives better primal bounds than the best known ones. When the instance is proven to be infeasible, the gap is set to 0\%, as well as when $LB^* = LB = 0$.
Entries in bold discriminate the cases where the OFP framework led to better performances than FP or otherwise.

\begin{table}[!ht]
\caption{Summary of the results obtained by six variations of the LNS algorithm tested. The execution times do not consider the time spent by the FP algorithms in finding initial solutions. Bold entries highlight, for each instance set, the best algorithm(s) in terms of average gaps and profit sums.}
\label{table_results_LNS}
\begin{subtable}{1\textwidth}
%\sisetup{table-format=2.2} 
\setlength\tabcolsep{3pt}
\centering
    \begin{tabular}
    {ll
    S[table-format=2.2]
    S[table-format=2.2]
    S[table-format=3.2]
    S[table-format=3.2]
    S[table-format=2.2]
    l
    S[table-format=2.2]
    S[table-format=2.2]
    S[table-format=3.2]
    S[table-format=3.2]
    S[table-format=2.2]
    }
\toprule
 & & \multicolumn{5}{c}{FP\_{cuts}\_LNS\_1000} & & \multicolumn{5}{c}{OFP\_{cuts}\_LNS\_1000} \\ 
 \cmidrule{3-7} \cmidrule{9-13}
  & & \multicolumn{2}{c}{Gap (\%)} & \multicolumn{2}{c}{Profit sum} & & & \multicolumn{2}{c}{Gap (\%)} & \multicolumn{2}{c}{Profit sum} &\\
  \cmidrule{3-6} \cmidrule{9-12}
\textbf{Set }& & {Avg} & {StDev} & {Avg} & {Best} & {Time (s)} & & {Avg} & {StDev} & {Avg} & {Best} & {Time (s)}\\
\midrule
1\_5\% && 0.37 & 1.91 & 79.34 & \textbf{79.63} & 0.10 && \textbf{0.00} & 0.00 & \textbf{79.63} & \textbf{79.63} & 0.10\\
2\_5\% && \textbf{0.00} & 0.00 & \textbf{41.52} & \textbf{41.52} & 0.01 && \textbf{0.00} & 0.00 & \textbf{41.52} & \textbf{41.52} & 0.01\\
3\_5\% && \textbf{0.00} & 0.00 & \textbf{308.50} & \textbf{308.50} & 0.20 && \textbf{0.00} & 0.00 & \textbf{308.50} & \textbf{308.50} & 0.20\\
4\_5\% && -0.07 & 1.22 & 609.37 & 613.75 & 7.37 && \textbf{-0.19} & 1.08 & \textbf{612.15} & \textbf{614.23} & 7.10\\
5\_5\% && \textbf{1.34} & 11.32 & 602.36 & 603.14 & 1.04 && \textbf{1.34} & 11.32 & \textbf{602.40} & \textbf{603.21} & 1.04\\
6\_5\% && \textbf{0.01} & 0.03 & \textbf{375.20} & \textbf{375.29} & 0.55 && 0.02 & 0.05 & 375.11 & \textbf{375.29} & 0.55\\
7\_5\% && \textbf{1.79} & 12.90 & \textbf{306.32} & \textbf{311.22} & 1.42 && 3.48 & 18.08 & 292.82 & 295.20 & 1.36\\
\midrule
\textbf{Total} && \textbf{0.59} & 7.23 & \textbf{366.53} & \textbf{368.18} & 1.68 && 0.78 & 8.79 & 364.91 & 365.78 & 1.63\\
\midrule
\end{tabular}
\end{subtable}
%\bigskip
\begin{subtable}{1\textwidth}
%\sisetup{table-format=2.2} 
\setlength\tabcolsep{3pt}
\centering
    \begin{tabular}
    {ll
    S[table-format=2.2]
    S[table-format=2.2]
    S[table-format=3.2]
    S[table-format=3.2]
    S[table-format=2.2]
    l
    S[table-format=2.2]
    S[table-format=2.2]
    S[table-format=3.2]
    S[table-format=3.2]
    S[table-format=2.2]
    }
\toprule
 & & \multicolumn{5}{c}{FP\_{cuts}\_LNS\_2000} & & \multicolumn{5}{c}{OFP\_{cuts}\_LNS\_2000} \\ 
 \cmidrule{3-7} \cmidrule{9-13}
  & & \multicolumn{2}{c}{Gap (\%)} & \multicolumn{2}{c}{Profit sum} & & & \multicolumn{2}{c}{Gap (\%)} & \multicolumn{2}{c}{Profit sum} &\\
  \cmidrule{3-6} \cmidrule{9-12}
\textbf{Set }& & {Avg} & {StDev} & {Avg} & {Best} & {Time (s)} & & {Avg} & {StDev} & {Avg} & {Best} & {Time (s)}\\
\midrule
1\_5\% && 0.37 & 1.91 & 79.34 & \textbf{79.63} & 0.20 && \textbf{0.00} & 0.00 & \textbf{79.63} & \textbf{79.63} & 0.20\\
2\_5\% && \textbf{0.00} & 0.00 & \textbf{41.52} & \textbf{41.52} & 0.03 && \textbf{0.00} & 0.00 & \textbf{41.52} & \textbf{41.52} & 0.03\\
3\_5\% && \textbf{0.00} & 0.00 & \textbf{308.50} & \textbf{308.50} & 0.39 && \textbf{0.00} & 0.00 & \textbf{308.50} & \textbf{308.50} & 0.39\\
4\_5\% && -0.15 & 1.19 & 610.07 & 614.02 & 14.75 && \textbf{-0.25} & 1.11 & \textbf{612.78} & \textbf{614.30} & 14.35\\
5\_5\% && 1.31 & 11.32 & 602.70 & 603.14 & 2.08 && \textbf{1.30} & 11.32 & \textbf{602.74} & \textbf{603.21} & 2.09\\
6\_5\% && \textbf{0.00} & 0.02 & \textbf{375.24} & \textbf{375.29} & 1.09 && 0.01 & 0.03 & 375.21 & \textbf{375.29} & 1.11\\
7\_5\% && \textbf{1.74} & 12.90 & \textbf{306.73} & \textbf{311.28} & 3.04 && 3.44 & 18.08 & 293.13 & 295.25 & 2.85\\
\midrule
\textbf{Total} && \textbf{0.56} & 7.23 & \textbf{366.78} & \textbf{368.23} & 3.39 && 0.76 & 8.80 & 365.14 & 365.80 & 3.30\\
\midrule
\end{tabular}
\end{subtable}
%\bigskip
\begin{subtable}{1\textwidth}
%\sisetup{table-format=2.2} 
\setlength\tabcolsep{3pt}
\centering
    \begin{tabular}
    {ll
    S[table-format=2.2]
    S[table-format=2.2]
    S[table-format=3.2]
    S[table-format=3.2]
    S[table-format=2.2]
    l
    S[table-format=2.2]
    S[table-format=2.2]
    S[table-format=3.2]
    S[table-format=3.2]
    S[table-format=2.2]
    }
\toprule
 & & \multicolumn{5}{c}{FP\_{cuts}\_LNS\_5000} & & \multicolumn{5}{c}{OFP\_{cuts}\_LNS\_5000} \\ 
 \cmidrule{3-7} \cmidrule{9-13}
  & & \multicolumn{2}{c}{Gap (\%)} & \multicolumn{2}{c}{Profit sum} & & & \multicolumn{2}{c}{Gap (\%)} & \multicolumn{2}{c}{Profit sum} &\\
  \cmidrule{3-6} \cmidrule{9-12}
\textbf{Set }& & {Avg} & {StDev} & {Avg} & {Best} & {Time (s)} & & {Avg} & {StDev} & {Avg} & {Best} & {Time (s)}\\
\midrule
1\_5\% && 0.37 & 1.91 & 79.34 & \textbf{79.63} & 0.50 && \textbf{0.00} & 0.00 & \textbf{79.63} & \textbf{79.63} & 0.51\\
2\_5\% && \textbf{0.00} & 0.00 & \textbf{41.52} & \textbf{41.52} & 0.07 && \textbf{0.00} & 0.00 & \textbf{41.52} & \textbf{41.52} & 0.07\\
3\_5\% && \textbf{0.00} & 0.00 & \textbf{308.50} & \textbf{308.50} & 0.98 && \textbf{0.00} & 0.00 & \textbf{308.50} & \textbf{308.50} & 1.00\\
4\_5\% && -0.24 & 1.17 & 611.07 & \textbf{614.47} & 37.46 && \textbf{-0.30} & 1.14 & \textbf{613.28} & 614.43 & 35.98\\
5\_5\% && \textbf{1.29} & 11.32 & \textbf{602.96} & \textbf{603.21} & 5.30 && \textbf{1.29} & 11.32 & 602.94 & \textbf{603.21} & 5.30\\
6\_5\% && \textbf{0.00} & 0.01 & \textbf{375.26} & \textbf{375.29} & 2.74 && \textbf{0.00} & 0.02 & \textbf{375.26} & \textbf{375.29} & 2.76\\
7\_5\% && \textbf{1.72} & 12.90 & \textbf{306.92} & \textbf{311.30} & 7.50 && 3.40 & 18.09 & 293.45 & 295.27 & 7.15\\
\midrule
\textbf{Total} && \textbf{0.54} & 7.23 & \textbf{367.02} & \textbf{368.32} & 8.57 && 0.74 & 8.80 & 365.31 & 365.82 & 8.29\\
\bottomrule
\end{tabular}
\end{subtable}
\end{table}

The results indicate that, on average, the LNS algorithms that use the initial solutions provided by FP\_cuts and OFP\_cuts reach final solutions with comparable quality.
In addition, as expected, increasing the number of iterations does improve (on average) the quality of the solutions obtained. Also notice that the average execution times are almost the same for algorithms with a same number of iterations, which is in accordance with the stopping criterion adopted in the algorithms. In fact, the LNS heuristics run fairly fast, being the search for initial feasible solutions responsible for the majority of the computational effort (see again Table~\ref{table_detailed_results_fp}). Such behaviour was expected, since finding an initial solution for STOP is an NP-hard task (Corollary~\ref{corol_np_hard_feasibility}). 

Notice that, for the instance set with the greatest dimensions (set 7\_5\%), the standard deviation of the relative gaps obtained by the six algorithms were particularly high. This is partially due to the fact that, for a few instances in this set, the cutting-plane baseline could neither find feasible solutions nor prove their infeasibility. In these cases, the relative gaps were set to 100\%.

As for the FP heuristics, we also applied the Iman-Davenport and the Nemenyi tests to validate the statistical significance of the results discussed above.
We highlight that, although better (greater) profit sums imply smaller relative gaps on a per-instance basis, the same does not hold when we consider average values, since relative gaps are normalized by definition. Once our statistical tests use the results on a per-instance basis, considering either relative gaps or profit sums leads to a same ranking of the algorithms. In particular, we ranked the algorithms according to the values $100 \cdot \left(1 - \frac{LB^* - LB}{LB^*}\right)$, as to make smaller gaps imply greater ranks.
\begin{figure}[!ht]
\centering
    \includegraphics[scale=0.7]{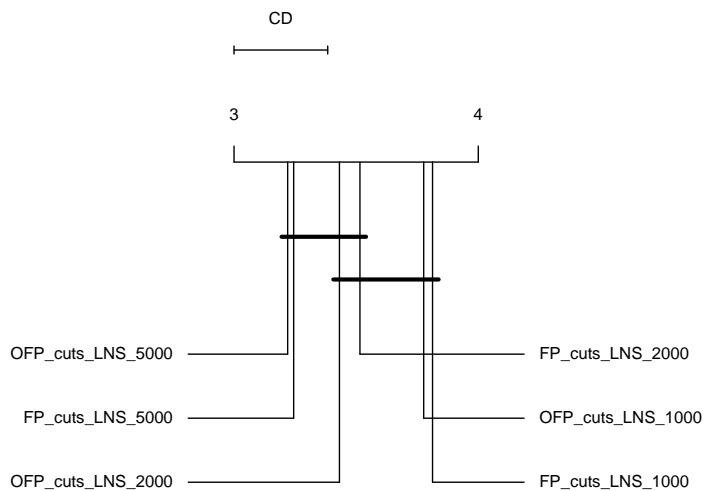}
  \caption{LNS algorithms' average ranks depicted on a number line, along with the Critical Difference (CD), when considering a significance level $\alpha = 5\%$. Connections between algorithms indicate non-significant differences.}
  \label{fig:LNS_ranks}
\end{figure}

Regarding the Iman-Davenport test, we obtained the statistic $F_F = 7.26$, with $p$-value = $9.49\cdot10^{-7}$, thus rejecting the null hypothesis of equal performances of all the six algorithms tested. Then, proceeding with the Nemenyi test, Figure~\ref{fig:LNS_ranks} depicts the average ranks of the six LNS algorithms and the Critical Difference (CD) while considering a significance level $\alpha = 5\%$. Connections between algorithms indicate non-significant differences with respect to CD.
From the results, we cannot conclude if the LNS algorithms based on FP\_cuts differ significantly (in terms of the quality of the solutions obtained) from the corresponding ones based on OFP\_cuts. On the other hand, the results clearly indicate that increasing the number of iterations from 1000 to 5000 does lead to a statistically significant improvement.

In Table~\ref{table_results_LNS_bounds}, we summarize the number of instances for which the two best variations of the heuristic (FP\_cuts\_LNS\_5000 and OFP\_cuts\_LNS\_5000) were able to reach the best previously known bounds. For each algorithm and instance set, we also indicate the number of cases where the LNS algorithm improved over the best known solutions.

\begin{table}[!ht]
\caption{Number of times the two best algorithms (FP\_cuts\_LNS\_5000 and OFP\_cuts\_LNS\_5000) reached and/or improved over the best known bounds. Bold entries highlight, for each instance set, the algorithm(s) that performed best.}
\label{table_results_LNS_bounds}
\centering
    \begin{tabular}
    {ll
    c
    c
    l
    c
    c
    }
\toprule
 && \multicolumn{2}{c}{FP\_{cuts}\_LNS\_5000} & & \multicolumn{2}{c}{OFP\_{cuts}\_LNS\_5000} \\ 
 \cmidrule{3-4} \cmidrule{6-7}
\textbf{Set } && \mbox{Reached best (\#)} & \mbox{Improved best (\#)} & & \mbox{Reached best (\#)} & \mbox{Improved best (\#)}\\
\midrule
1\_5\%& & \textbf{54/54} & 0/54& & \textbf{54/54} & 0/54\\
2\_5\%& & \textbf{33/33} & 0/33& & \textbf{33/33} & 0/33\\
3\_5\%& & \textbf{60/60} & 0/60& & \textbf{60/60} & 0/60\\
4\_5\%& & \textbf{59/60} & \textbf{18/60} & & \textbf{59/60} & 17/60\\
5\_5\%& & \textbf{76/78} & \textbf{3/78} & & \textbf{76/78} & \textbf{3/78}\\
6\_5\%& & \textbf{42/42} & 0/42& & \textbf{42/42} & 0/42\\
7\_5\%& & \textbf{58/60} & 0/60& & 54/60 & 0/60\\
\midrule
\textbf{Total} & & \textbf{382/387} & \textbf{21/387} & &378/387 & 20/387\\
\bottomrule
\end{tabular}
\end{table}

At last, we tested if dramatically increasing the number of iterations (to 100000) could close the gaps for two of the hardest instance sets (Sets 5\_5\% and 7\_5\%). The results, which are given in Table~\ref{table_results_LNS_100000_STOP}, suggest that this is not the case.

\begin{table}[!ht]
\caption{Summary of the results obtained by FP\_cuts\_LNS\_100000 and OFP\_cuts\_LNS\_100000 while solving two of the more challenging instance sets. The execution times do not consider the time spent by the FP algorithms in finding initial solutions. Bold entries highlight, for each instance set, the best algorithm(s) in terms of average gaps and profit sums.}
\label{table_results_LNS_100000_STOP}
\setlength\tabcolsep{3pt}
\centering
    \begin{tabular}
    {ll
    S[table-format=2.2]
    S[table-format=2.2]
    S[table-format=3.2]
    S[table-format=3.2]
    S[table-format=2.2]
    l
    S[table-format=2.2]
    S[table-format=2.2]
    S[table-format=3.2]
    S[table-format=3.2]
    S[table-format=2.2]
    }
\toprule
 & & \multicolumn{5}{c}{FP\_{cuts}\_LNS\_100000} & & \multicolumn{5}{c}{OFP\_{cuts}\_LNS\_100000} \\ 
 \cmidrule{3-7} \cmidrule{9-13}
  & & \multicolumn{2}{c}{Gap (\%)} & \multicolumn{2}{c}{Profit sum} & & & \multicolumn{2}{c}{Gap (\%)} & \multicolumn{2}{c}{Profit sum} &\\
  \cmidrule{3-6} \cmidrule{9-12}
\textbf{Set }& & {Avg} & {StDev} & {Avg} & {Best} & {Time (s)} & & {Avg} & {StDev} & {Avg} & {Best} & {Time (s)}\\
\midrule
5\_5\% && \textbf{1.27} & 11.32 & \textbf{603.12} & \textbf{603.27} & 103.01 && 1.28 & 11.32 & 603.08 & \textbf{603.27} & 104.04\\
7\_5\% && \textbf{1.69} & 12.91 & \textbf{307.14} & \textbf{311.33} & 148.69 && 3.37 & 18.10 & 293.73 & 295.35 & 143.86\\
\bottomrule
\end{tabular}
\end{table}

\subsubsection{Exploring the convergence of the LNS heuristic}
We performed additional experiments to have some insights on the convergence of the heuristic proposed. To this end, we tested stopping the execution whenever the heuristic converges to a local optimal. Precisely, instead of performing a fixed number of iterations, we halt the execution after \emph{stalling\_limit} iterations without improving --- in terms of profit sum --- the current best solution (see again the main algorithm in Figure~\ref{fig_LNS}). In this case, the PR procedure is not considered, once the heuristic stops right before the first call of PR.

As a way to also assess the influence of the initial solutions in this case, we considered the four variations of FP described in Table~\ref{table_fp_variations} as starting points for the LNS heuristic. The results are exposed in Table~\ref{table_results_conv_LNS}. Here, the columns have the same meaning as the ones of Table~\ref{table_results_LNS}. Then, we performed the Iman-Davenport test based on the relative gaps, obtaining the statistic $F_F = 1.76$, with $p$-value = $0.15$, which does not discard the null hypothesis of equal performances of all the four algorithms tested. 

\begin{table}[!ht]
\caption{Summary of the results obtained by the LNS heuristic when halting after \emph{stalling\_limit} iterations without profit sum improvement. The four FP algorithms --- described in Table~\ref{table_fp_variations} --- are tested as input for this version of the heuristic. The execution times do not consider the time spent by the FP algorithms in finding initial solutions. Bold entries highlight, for each instance set, the best algorithm(s) in terms of average gaps and profit sums.}
\label{table_results_conv_LNS}
\begin{subtable}{1\textwidth}
%\sisetup{table-format=2.2} 
\setlength\tabcolsep{3pt}
\centering
    \begin{tabular}
    {ll
    S[table-format=2.2]
    S[table-format=2.2]
    S[table-format=3.2]
    S[table-format=3.2]
    S[table-format=2.2]
    l
    S[table-format=2.2]
    S[table-format=2.2]
    S[table-format=3.2]
    S[table-format=3.2]
    S[table-format=2.2]
    }
\toprule
 & & \multicolumn{5}{c}{FP\_{raw}\_LNS} & & \multicolumn{5}{c}{FP\_{cuts}\_LNS} \\ 
 \cmidrule{3-7} \cmidrule{9-13}
  & & \multicolumn{2}{c}{Gap (\%)} & \multicolumn{2}{c}{Profit sum} & & & \multicolumn{2}{c}{Gap (\%)} & \multicolumn{2}{c}{Profit sum} &\\
  \cmidrule{3-6} \cmidrule{9-12}
\textbf{Set }& & {Avg} & {StDev} & {Avg} & {Best} & {Time (s)} & & {Avg} & {StDev} & {Avg} & {Best} & {Time (s)}\\
\midrule
1\_5\% && 0.98 & 6.80 & 79.14 & \textbf{79.63} & 0.01 && \textbf{0.39} & 1.90 & \textbf{79.31} & \textbf{79.63} & 0.01\\
2\_5\% && \textbf{0.00} & 0.00 & \textbf{41.52} & \textbf{41.52} & 0.00 && \textbf{0.00} & 0.00 & \textbf{41.52} & \textbf{41.52} & 0.00\\
3\_5\% && \textbf{0.05} & 0.26 & \textbf{308.37} & \textbf{308.50} & 0.02 && 0.06 & 0.23 & 308.28 & \textbf{308.50} & 0.02\\
4\_5\% && 0.70 & 1.80 & 599.85 & \textbf{611.93} & 1.43 && \textbf{0.42} & 1.37 & \textbf{604.89} & 611.90 & 1.29\\
5\_5\% && \textbf{0.62} & 1.17 & \textbf{612.54} & \textbf{617.95} & 0.14 && 1.71 & 11.29 & 598.06 & 601.99 & 0.13\\
6\_5\% && \textbf{0.14} & 0.32 & \textbf{373.90} & \textbf{374.86} & 0.06 && 0.17 & 0.34 & 373.89 & \textbf{374.86} & 0.06\\
7\_5\% && 2.25 & 12.87 & 302.13 & 309.70 & 0.25 && \textbf{2.16} & 12.87 & \textbf{303.28} & \textbf{310.00} & 0.23\\
\midrule
\textbf{Total} && \textbf{0.74} & 5.74 & \textbf{366.27} & \textbf{370.60} & 0.30 && 0.83 & 7.23 & 364.32 & 367.42 & 0.27\\
\midrule
\end{tabular}
\end{subtable}
%\bigskip
\begin{subtable}{1\textwidth}
%\sisetup{table-format=2.2} 
\setlength\tabcolsep{3pt}
\centering
    \begin{tabular}
    {ll
    S[table-format=2.2]
    S[table-format=2.2]
    S[table-format=3.2]
    S[table-format=3.2]
    S[table-format=2.2]
    l
    S[table-format=2.2]
    S[table-format=2.2]
    S[table-format=3.2]
    S[table-format=3.2]
    S[table-format=2.2]
    }
\toprule
 & & \multicolumn{5}{c}{OFP\_{raw}\_LNS} & & \multicolumn{5}{c}{OFP\_{cuts}\_LNS} \\ 
 \cmidrule{3-7} \cmidrule{9-13}
  & & \multicolumn{2}{c}{Gap (\%)} & \multicolumn{2}{c}{Profit sum} & & & \multicolumn{2}{c}{Gap (\%)} & \multicolumn{2}{c}{Profit sum} &\\
  \cmidrule{3-6} \cmidrule{9-12}
\textbf{Set }& & {Avg} & {StDev} & {Avg} & {Best} & {Time (s)} & & {Avg} & {StDev} & {Avg} & {Best} & {Time (s)}\\
\midrule
1\_5\% && 1.01 & 7.10 & 79.12 & \textbf{79.63} & 0.01 && \textbf{0.02} & 0.08 & \textbf{79.60} & \textbf{79.63} & 0.01\\
2\_5\% && \textbf{0.00} & 0.00 & \textbf{41.52} & \textbf{41.52} & 0.00 && \textbf{0.00} & 0.00 & \textbf{41.52} & \textbf{41.52} & 0.00\\
3\_5\% && \textbf{0.02} & 0.09 & \textbf{308.42} & \textbf{308.50} & 0.02 && 0.07 & 0.32 & 308.25 & \textbf{308.50} & 0.02\\
4\_5\% && 0.66 & 1.80 & 602.13 & 611.38 & 1.45 && \textbf{0.27} & 1.20 & \textbf{607.86} & \textbf{612.58} & 1.33\\
5\_5\% && \textbf{0.56} & 0.72 & \textbf{609.26} & \textbf{618.01} & 0.14 && 1.73 & 11.29 & 598.07 & 602.37 & 0.14\\
6\_5\% && 2.49 & 15.41 & 350.09 & 351.00 & 0.06 && \textbf{0.14} & 0.34 & \textbf{374.03} & \textbf{374.86} & 0.06\\
7\_5\% && \textbf{2.38} & 12.93 & \textbf{297.35} & \textbf{309.83} & 0.28 && 3.76 & 18.04 & 290.60 & 294.43 & 0.22\\
\midrule
\textbf{Total} && \textbf{1.00} & 7.69 & 362.64 & \textbf{367.96} & 0.31 && \textbf{1.00} & 8.79 & \textbf{362.87} & 365.19 & 0.28\\
\midrule
\end{tabular}
\end{subtable}
\end{table}

From Table~\ref{table_results_conv_LNS}, one can notice that our heuristic obtains fairly good quality solutions for the four variations tested, with extremely small average execution times. Then, the results suggest that, although the reinforcement of the initial model makes FP converge to better quality solutions (as discussed in Section~\ref{s_results_FP}), the LNS itself is able to reach near-optimal solutions regardless of the initial ones used as input.

\subsection{Results for the original benchmark of TOP instances}
For completeness, the two variations of the heuristic that performed better for the STOP instances (FP\_cuts\_LNS\_5000 and OFP\_cuts\_LNS\_5000) were also run to solve the original benchmark of TOP instances~\citep{Chao96}. These instances are exactly the same STOP instances described in Section~\ref{s_benchmarks}, except for the lack of vertices set as mandatory. To evaluate the quality of the heuristic solutions obtained, we consider the best bounds found by the exact algorithm of \cite{Assuncao2019} (previous state-of-the-art), once the newest bounds obtained by \cite{Hanafi2020} are not reported in the publication. The results obtained are summarized in Tables~\ref{table_results_FP_TOP},~\ref{table_results_LNS_TOP} and~\ref{table_results_LNS_bounds_TOP}.

\begin{table}[!ht]
\caption{Summary of the results obtained by FP\_cuts and OFP\_cuts while solving the original benchmark of TOP instances. Bold entries highlight, for each instance set, the best algorithm(s) in terms of average gaps and number of pumps.}
\label{table_results_FP_TOP}
\setlength\tabcolsep{3pt}
\centering
     \begin{tabular}
    {l
    S[table-format=2.2]
    S[table-format=2.2]
    S[table-format=2.2]
    S[table-format=2.2]
    S[table-format=2.2]
    S[table-format=2.2]
    S[table-format=2.2]
    S[table-format=2.2]
    S[table-format=2.2]
    S[table-format=2.2]
    }
\toprule
 & & \multicolumn{4}{c}{FP\_{cuts}} & & \multicolumn{4}{c}{OFP\_{cuts}} \\ 
 \cmidrule{3-6} \cmidrule{8-11}
  & & \multicolumn{2}{c}{Gap (\%)} & & & & \multicolumn{2}{c}{Gap (\%)} & &\\
  \cmidrule{3-4} \cmidrule{8-9}
\textbf{Set }& & {Avg} & {StDev} & {Pumps (\#)} & {Time (s)} & & {Avg} & {StDev} & {Pumps (\#)} & {Time (s)} \\
\midrule
1 & & 27.91 & 26.17 & \textbf{4.18} & 0.09 & & \textbf{27.35} & 24.43 & 21.97 & 0.45\\
2 & & 35.96 & 31.49 & \textbf{3.49} & 0.01 & & \textbf{34.93} & 30.14 & 21.51 & 0.03\\
3 & & \textbf{32.16} & 23.69 & \textbf{5.75} & 0.14 & & 32.49 & 21.29 & 29.97 & 0.59\\
4 & & 24.72 & 13.85 & \textbf{14.31} & 27.57 & & \textbf{22.65} & 12.12 & 41.03 & 62.89\\
5 & & \textbf{26.60} & 22.23 & \textbf{9.15} & 2.68 & & 27.22 & 22.98 & 31.64 & 8.81\\
6 & & 17.06 & 19.64 & \textbf{4.78} & 0.57 & & \textbf{13.61} & 17.83 & 20.09 & 2.94\\
7 & & 31.50 & 21.01 & \textbf{13.46} & 59.77 & & \textbf{28.71} & 17.91 & 35.22 & 112.67\\
\midrule
\textbf{Total} & & 27.88 & 22.88 & \textbf{8.44} & 14.18 & & \textbf{26.76} & 21.68 & 29.93 & 29.47\\
\bottomrule
\end{tabular}
\end{table}

\begin{table}[!ht]
\caption{Summary of the results obtained by FP\_cuts\_LNS\_5000 and OFP\_cuts\_LNS\_5000 while solving the original benchmark of TOP instances. The execution times do not consider the time spent by the FP algorithms in finding initial solutions. Bold entries highlight, for each instance set, the best algorithm(s) in terms of average gaps and profit sums.}
\label{table_results_LNS_TOP}
\setlength\tabcolsep{3pt}
\centering
    \begin{tabular}
    {ll
    S[table-format=2.2]
    S[table-format=2.2]
    S[table-format=3.2]
    S[table-format=3.2]
    S[table-format=2.2]
    l
    S[table-format=2.2]
    S[table-format=2.2]
    S[table-format=3.2]
    S[table-format=3.2]
    S[table-format=2.2]
    }
\toprule
 & & \multicolumn{5}{c}{FP\_{cuts}\_LNS\_5000} & & \multicolumn{5}{c}{OFP\_{cuts}\_LNS\_5000} \\ 
 \cmidrule{3-7} \cmidrule{9-13}
  & & \multicolumn{2}{c}{Gap (\%)} & \multicolumn{2}{c}{Profit sum} & & & \multicolumn{2}{c}{Gap (\%)} & \multicolumn{2}{c}{Profit sum} &\\
  \cmidrule{3-6} \cmidrule{9-12}
\textbf{Set }& & {Avg} & {StDev} & {Avg} & {Best} & {Time (s)} & & {Avg} & {StDev} & {Avg} & {Best} & {Time (s)}\\
\midrule
1 && 1.67 & 9.66 & 111.07 & \textbf{112.04} & 0.65 && \textbf{0.00} & 0.00 & \textbf{112.04} & \textbf{112.04} & 0.65\\
2 && 6.67 & 15.14 & 131.45 & \textbf{140.45} & 0.18 && \textbf{0.61} & 2.42 & \textbf{139.14} & \textbf{140.45} & 0.19\\
3 && 3.17 & 13.96 & 411.22 & \textbf{414.67} & 1.18 && \textbf{0.00} & 0.00 & \textbf{414.67} & \textbf{414.67} & 1.18\\
4 && 0.34 & 3.11 & 801.22 & 804.23 & 38.20 && \textbf{-0.17} & 1.14 & \textbf{802.63} & \textbf{804.25} & 37.31\\
5 && 0.38 & 2.52 & 756.16 & 756.92 & 5.72 && \textbf{0.00} & 0.11 & \textbf{756.74} & \textbf{756.99} & 5.69\\
6 && 0.71 & 4.63 & 449.20 & \textbf{450.57} & 3.10 && \textbf{0.24} & 1.54 & \textbf{450.11} & \textbf{450.57} & 2.92\\
7 && 2.66 & 12.00 & 562.26 & 568.25 & 10.90 && \textbf{0.14} & 0.27 & \textbf{567.33} & \textbf{568.55} & 10.97\\
\midrule
\textbf{Total} && 1.91 & 9.59 & 503.01 & 506.14 & 9.39 && \textbf{0.07} & 1.00 & \textbf{505.56} & \textbf{506.21} & 9.24\\
\midrule
\end{tabular}
\end{table}

\begin{table}[!ht]
\caption{Number of times the two best algorithms (FP\_cuts\_LNS\_5000 and OFP\_cuts\_LNS\_5000) reached and/or improved over the primal bounds provided by the cutting-plane algorithm of~\cite{Assuncao2019} while solving the original benchmark of TOP instances. Bold entries highlight, for each instance set, the algorithm(s) that performed best.}
\label{table_results_LNS_bounds_TOP}
\centering
\begin{adjustbox}{width=1\textwidth}
    \begin{tabular}
    {ll
    c
    c
    l
    c
    c
    }
\toprule
 && \multicolumn{2}{c}{FP\_{cuts}\_LNS\_5000} & & \multicolumn{2}{c}{OFP\_{cuts}\_LNS\_5000} \\ 
 \cmidrule{3-4} \cmidrule{6-7}
\textbf{Set } && \mbox{Reached best (\#)} & \mbox{Improved best (\#)} & & \mbox{Reached best (\#)} & \mbox{Improved best (\#)}\\
\midrule
1& & \textbf{54/54} & 0/54& & \textbf{54/54} & 0/54\\
2& & \textbf{33/33} & 0/33& & \textbf{33/33} & 0/33\\
3& & \textbf{60/60} & 0/60& & \textbf{60/60} & 0/60\\
4& & \textbf{55/60} & \textbf{12/60} & & 53/60 & \textbf{12/60}\\
5& & \textbf{78/78} & 2/78& & \textbf{78/78} & \textbf{3/78}\\
6& & \textbf{42/42} & 0/42& & \textbf{42/42} & 0/42\\
7& & 56/60 & \textbf{1/60} & & \textbf{59/60} & \textbf{1/60}\\
\midrule
\textbf{Total} & &378/387 & 15/387 & & \textbf{379/387} & \textbf{16/387}\\
\midrule
\end{tabular}
\end{adjustbox}
\end{table}
In Table~\ref{table_results_FP_TOP}, we report, for each FP algorithm tested (FP\_cuts and OFP\_cuts) and instance set, four values: (i) the average and (ii) the standard deviation of the relative gaps given by $100 \cdot \frac{LB^* - LB}{LB^*}$. Recall that $LB^*$ is the best primal bound (for the instance) obtained by the exact algorithm of \cite{Assuncao2019}, and $LB$ is the average bound (over the 10 executions) obtained by the corresponding FP algorithm; (iii) the average number of iterations/pumps (over the 10 executions and all instances of the set), and (iv) the average wall-clock processing time (in seconds).
We highlight that, whenever $LB^* = LB = 0$, the corresponding gap of (i) is set to 0\%.

In Table~\ref{table_results_LNS_TOP}, for each LNS algorithm tested (FP\_cuts\_LNS\_5000 and OFP\_cuts\_LNS\_5000) and instance set, we report five values: (i) the average and (ii) the standard deviation of the relative gaps given by $100 \cdot \frac{LB^* - LB}{LB^*}$; (iii) the average (over the 10 executions and all instances of the set) profit sum of the solutions obtained, and (iv) the average (over all instances of the set) profit sum of the best (in each round of 10 executions) solutions found. At last, we provide (v) the average wall-clock processing time (in seconds) of the execution of the LNS steps, excluding the time spent by the FP algorithm in finding the initial solution. Also in this case, whenever $LB^* = LB = 0$, the corresponding gap of (i) is set to 0\%.

In Table~\ref{table_results_LNS_bounds_TOP}, we summarize the number of instances for which the two best variations of the heuristic (FP\_cuts\_LNS\_5000 and OFP\_cuts\_LNS\_5000) were able to reach the primal bounds obtained by baseline exact algorithm \citep{Assuncao2019}. For each algorithm and instance set, we also indicate the number of cases where the LNS algorithm improved over the best solutions provided by \cite{Assuncao2019}.

\section{Concluding remarks}
In this work, we prove that solely finding a feasible solution for the Steiner Team Orienteering Problem (STOP) is NP-hard and propose a Large Neighborhood Search (LNS) heuristic to solve the problem. The heuristic combines classical local searches from the literature of routing problems with a memory component based on Path Relinking (PR). Our heuristic is provided with initial solutions obtained by means of the matheuristic framework known as Feasibility Pump (FP). In our implementations, FP uses as backbone a commodity-based formulation reinforced by three classes of valid inequalities, being two of them also introduced in this work. 

We used the primal bounds provided by a state-of-the-art cutting-plane algorithm from the literature to guide our computational experiments, which showed the efficiency and effectiveness of the proposed heuristic in solving a benchmark of 387 instances. In particular, the heuristic could reach the best previously known bounds on 382 of these instances. Additionally, in 21 of these cases, our heuristic was even able to improve over the best known bounds. Overall, the heuristic solutions imply an average percentage gap of only 0.54\% when compared to the bounds of the cutting-plane baseline.

In terms of future work directions, we first remark that the two new classes of inequalities here proposed can be naturally extended for other routing problems. In addition, we believe that our heuristic might be also successful in solving closely related orienteering problems, such as OARP and TOARP, in which arcs (instead of vertices) are sub-divided into mandatory and profitable.

\section*{Acknowledgments}
This work was partially supported by the Brazilian National Council
for Scientific and Technological Development (CNPq), the Foundation
for Support of Research of the State of Minas Gerais, Brazil
(FAPEMIG), and Coordination for the Improvement of Higher Education
Personnel, Brazil (CAPES).

\bibliography{mybibfile}

\end{document}